\theoremstyle{plain}
\newtheorem{theorem}{Theorem}[section]
\newtheorem{proposition}[theorem]{Proposition}
\newtheorem{lemma}[theorem]{Lemma}
\newtheorem{corollary}[theorem]{Corollary}
\theoremstyle{definition}
\newtheorem{remark}[theorem]{Remark}
\def\qed{\hfill\rule{1ex}{1ex}\\}
\newenvironment{pf}{\noindent {\bf Proof.}}{\qed}
\title{The Mean Value Theorem and\\
Basic Properties of the Obstacle Problem for\\
Divergence Form Elliptic Operators}
\author{Ivan Blank and Zheng Hao}
\newcommand{\nc}[2]{ \newcommand{#1}{#2} }
\nc{\avint}{ {- \hspace{-3.5mm} \int} }  % average integral
\nc{\xpr}{x^{\prime}}
\nc{\R}{{\rm {I \! R}}}  % Real Numbers
\nc{\N}{{\rm {I \! N}}}  % Natural Numbers
\newcommand{\closure}[1]{ \stackrel{\rule{.1 in}{.01 in}}{#1} }
\newcommand{\dclosure}[1]{ \stackrel{\rule{.2 in}{.01 in}}{#1} }
\newcommand{\pclosure}[1]{ \stackrel{\rule{.5 in}{.01 in}}{#1} }
\newcommand{\chisub}[1]{ {\mathbf{\chi}}_{_{#1}} }
\newcommand{\newsec}[2]{ \section{#1} \label{sec-#2}  % starts new section,
                         \setcounter{equation}{0}     % resets counters,
                         \setcounter{theorem}{0} }    % makes label
\newcommand{\refeqn}[1]{ (\!\!~\ref{eq:#1}) } % gives references to
\newcommand{\refthm}[1]{ (\!\!~\ref{#1}) }    % equations or theorems
\nc{\Holder}{H\"{o}lder\ }
\nc{\ith}{ \ensuremath{\text{i}^{\text{th}}} }
\nc{\jth}{ \ensuremath{\text{j}^{\text{th}}} }
\nc{\kth}{ \ensuremath{\text{k}^{\text{th}}} }
\nc{\curl}{ \nabla \times }
\nc{\Div}{ \nabla \cdot }
\nc{\Ppl}{ \mathcal{M}^{+} }  \nc{\Pmn}{ \mathcal{M}^{-} }
\nc{\smiley}{ $\stackrel{\because}{\smile} \;$ }
\newcommand{\BVP}[4]{  % 1 = equation, 2 = bdry vals, 3 = label, 4 = set.
  \begin{equation}
        \begin{array}{rl}
           #1 & \ \text{in}
               \ \ #4 \vspace{.05in} \\
           #2 & \ \text{on} \ \ \partial #4 \;.
        \end{array}
  \label{eq:#3}
  \end{equation}    }
\newcommand{\BVPb}[3]{   \BVP{#1}{#2}{#3}{ B_{1} }  }
\newcommand{\BVPhao}[4]{  % 1 = equation, 2 = bdry vals, 3 = label, 4 = set.
  \begin{equation}
        \left\{
        \begin{array}{ll}
           #1 & \ \text{in}
               \ \ #4 \vspace{.05in} \\
           \ \\
           #2 & \ \text{on} \ \ \partial #4 \;.
        \end{array} \right.
  \label{eq:#3}
  \end{equation}    }
\newcommand{\BVPc}[4]{  % 1 = equation, 2 = bdry vals, 3 = label, 4 = set.
  \begin{equation}
        \begin{array}{rl}
           #1 & \ \text{in}
               \ \ #4 \vspace{.05in} \\
           #2 & \ \text{on} \ \ \partial #4 \;,
        \end{array}
  \label{eq:#3}
  \end{equation}    }
\newcommand{\BVPbc}[3]{   \BVPc{#1}{#2}{#3}{ B_{1} }  }
\newcommand{\BVPn}[4]{  % 1 = equation, 2 = bdry vals, 3 = label, 4 = set.
  \begin{equation}
        \begin{array}{rl}
           #1 & \ \text{in}
               \ \ #4 \vspace{.05in} \\
           #2 & \ \text{on} \ \ \partial #4
        \end{array}
  \label{eq:#3}
  \end{equation}    }
\newcommand{\BVPbn}[3]{   \BVPn{#1}{#2}{#3}{ B_{1} }  }
\begin{document}
\numberwithin{equation}{section}
\maketitle

\begin{abstract} \noindent
In 1963, Littman, Stampacchia, and Weinberger proved a mean value theorem for elliptic operators in
divergence form with bounded measurable coefficients.  In the Fermi lectures in 1998, Caffarelli stated
a much simpler mean value theorem for the same situation, but did not include the details of the proof.
We show all of the nontrivial details needed to prove the formula stated by Caffarelli, and in the course
of showing these details we establish some of the basic facts about the obstacle problem for general elliptic 
divergence form operators, in particular, we show a basic quadratic nondegeneracy property.
\end{abstract}

\newsec{Introduction}{Intro}
Based on the ubiquitous nature of the mean value theorem in problems involving the Laplacian, it is clear that an
analogous formula for a general divergence form elliptic operator would necessarily be very useful.  In \cite{LSW},
Littman, Stampacchia, and Weinberger stated a mean value theorem for a general divergence form operator, $L.$
If $\mu$ is a nonnegative measure on $\Omega$ and $u$ is the solution to:  \BVPc{Lu = \mu}{0}{LSWbvp}{\Omega}
and $G(x,y)$ is the Green's function for $L$ on $\Omega$
then Equation 8.3 in their paper states that $u(y)$ is equal to
\begin{equation}
    \lim_{a \rightarrow \infty}  \frac{1}{2a} \int_{a \leq G \leq 3a} u(x) a^{ij}(x) D_{x_i} G(x,y) D_{x_j} G(x,y) \; dx
\label{eq:LSWWMVT}
\end{equation}
almost everywhere, and this limit is nondecreasing.  The pointwise definition of $u$ given by this equation is
necessarily lower semi-continuous.  There are a few reasons why this formula is not as nice as the basic mean
value formulas for Laplace's equation.  First, it is a weighted average and not a simple average.  Second, it is
not an average over a ball or something which is even homeomorphic to a ball.  Third, it requires knowledge of
derivatives of the Green's function.

A simpler formula was stated by Caffarelli in \cite{C} and \cite{CR}.  That formula provides an increasing family of
sets, $D_R(x_0),$ which are each comparable to $B_R$ and such that for a supersolution to $Lu = 0$ the
average:
$$\frac{1}{|D_R(x_0)|} \int_{D_R(x_0)} u(x) \; dx$$
is nondecreasing as $R \rightarrow 0.$  On the other hand, Caffarelli did not provide any details about showing
the existence of an important test function used in the proof of this result, and showing the existence of this
function turns out to be nontrivial.  This paper grew out of an effort to prove rigorously all of the details of the
mean value theorem that Caffarelli asserted in \cite{C} and \cite{CR}.

In order to get the existence of the key test function, one must be able to solve the variational inequality or
obstacle type problem:
\begin{equation}
   D_{i} a^{ij} D_{j} V_{R} = \frac{1}{R^n} \chisub{ \{ V_{R} > 0 \} } - \delta_{x_0} 
\label{eq:yuckyObProb}
\end{equation}
where $\delta_{x_0}$ denotes the Dirac mass at $x_0.$  In \cite{CR}, the book by Kinderlehrer and Stampacchia
is cited (see \cite{KS}) for the mean value theorem.  Although many of the techniques in that book are used in the
current work, an exact theorem to give the existence of a solution to Equation\refeqn{yuckyObProb}was not found
in \cite{KS} by either author of this paper or by Kinderlehrer (\cite{K}).  The authors of this work were also unable to
find a suitable theorem in other standard sources for the obstacle problem.  (See \cite{F} and \cite{R}.)  Indeed, we
believe that without the nondegeneracy theorem stated in this paper there is a gap in the proof.

To understand the difficulty inherent in proving a nondegeneracy theorem in the divergence form case it helps to review
the proof of nondegeneracy for the Laplacian and/or in the nondivergence form case.  (See \cite{B}, \cite{BT}, and
\cite{C}.)  In those cases good use is made of the barrier function $|x - x_0|^2.$  The relevant properties are that
this function is nonnegative and vanishing at $x_0,$ it grows quadratically, and most of all, for a nondivergence form
elliptic operator $L,$ there exists a constant $\gamma > 0$ such that $L(|x - x_0|^2) \geq \gamma.$  On the other
hand, when $L$ is a divergence form operator with only bounded measurable coefficients, it is clear that
$L(|x - x_0|^2)$ does not make sense in general.

Now we give an outline of the paper.  In section two we almost get the existence of a solution to a PDE formulation
of the obstacle problem.  In section three we first show the basic quadratic regularity and nondegeneracy result
for our functions which are only ``almost'' solutions, and then we use these results to show that our ``almost''
solutions are true solutions.  In section four we get existence and uniqueness of solutions of a variational
formulation of the obstacle problem, and then show that the two formulations are equivalent.  In section five
we show the existence of a function which we then use in the sixth section to prove the mean value theorem
stated in \cite{C} and \cite{CR}, and give some corollaries.

Throughout the paper we assume that $a^{ij}(x)$ are bounded, symmetric, and uniformly elliptic,
and we define the divergence form elliptic operator
\begin{equation}
     L := D_j \; a^{ij}(x) D_i \;,
%     L := \partial_{j} a^{ij}(x) \partial_{i} \;,
\label{eq:Ldef}
\end{equation}
or, in other words, for a function $u \in W^{1,2}(\Omega)$ and $f \in L^2(\Omega)$ we say ``$Lu = f$ in $\Omega$'' if
for any $\phi \in W_{0}^{1,2}(\Omega)$ we have:
\begin{equation}
    - \int_{\Omega} a^{ij}(x) D_{i} u D_{j} \phi = \int_{\Omega} g \phi \;.
\label{eq:Ldef2}
\end{equation}
(Notice that with our sign conventions we can have $L = \Delta$ but not $L = -\Delta.$)
With our operator $L$ we let $G(x,y)$ denote the Green's function for all of $\R^n$ and observe that the existence of
$G$ is guaranteed by the work of Littman, Stampacchia, and Weinberger.  (See \cite{LSW}.)

The results in this paper are used in a forthcoming sequel where we establish some weak regularity results for 
the free boundary in the case where the coefficients are assumed to belong to the space of vanishing mean oscillation.
The methods of that paper rely on stability, flatness, and compactness arguments.
(See \cite{BH}.)  In the case where the coefficients are assumed to be Lipschitz continuous, recent work of
Focardi, Gelli, and Spadaro establishes stronger regularity results of the free boundary.  The methods of that
work have a more ``energetic'' flavor:  They generalize some important monotonicity formulas,
and use these formulas along with the epiperimetric inequality due to Weiss and a generalization
of Rellich and N\v{e}cas' identity to prove their regularity results.  (See \cite{FGS}.)

%%%%%%%%%%%%%%%%%%%%%%%%%%%%%%%%%%%%%%%%%%%%%%%%%%%%
%%%%%%%%%%%%%%%%%%%%%%%%%%%%%%%%%%%%%%%%%%%%%%%%%%%%
%%%%%%%%%%%%%%%%%%%%%%%%%%%%%%%%%%%%%%%%%%%%%%%%%%%%

\newsec{The PDE Obstacle Problem with a Gap}{PDEgap}

We wish to establish the existence of weak solutions to an obstacle type problem which we now describe.  We assume
that we are given
\begin{equation}
   f, a^{ij} \in L^{\infty}(B_1) \ \ \ \text{and} \ \ \ g \in W^{1,2}(B_1) \; \cap \; L^{\infty}(B_1),
\label{eq:FctSpaces}
\end{equation}
which satisfy:
\begin{equation}
   \begin{array}{l}
        \displaystyle{0 < \bar{\lambda} \leq f \leq \bar{\Lambda} \;, \rule[-.1 in]{0 in}{.3 in}} \\
        \displaystyle{a^{ij} \equiv a^{ji} \;,  \rule[-.1 in]{0 in}{.3 in}}\\
        \displaystyle{0 < \lambda |\xi|^2 \leq a^{ij} \xi_i \xi_j \leq \Lambda |\xi|^2 
           \ \ \text{for all} \ \xi \in \R^n, \ \xi \ne 0 \;, \ \ \text{and}  \rule[-.1 in]{0 in}{.3 in}} \\
        \displaystyle{g \equiv \hspace{-.15in}  \slash \; 0 \  \text{on} \ \partial B_1, \ g \geq 0. \rule[-.1 in]{0 in}{.3 in}}
   \end{array}
\label{eq:UniformEllipAndPos}
\end{equation}
We want to find a nonnegative function $w \in W^{1,2}(B_1)$ which is a weak solution of:
\BVP{Lw = \chisub{ \{ w > 0 \} }f}{w = g}{PDEverOP}{B_1}
In this section we will content ourselves to produce a nonnegative function $w \in W^{1,2}(B_1)$ which is
a weak solution of:
\BVPc{Lw = h}{w = g}{PDEverOPh}{B_1}
where we know that $h$ is a nonnegative function satisfying:
\begin{equation}
    \begin{array}{rll}
         h(x) &\!\! = 0 \ \ \ &\text{for} \ x \in \{ w = 0 \}^{\text{o}} \rule[-.1 in]{0 in}{.3 in} \\
         h(x) &\!\! = f(x) \ \ \ &\text{for} \ x \in \{ w > 0 \}^{\text{o}} \rule[-.1 in]{0 in}{.3 in} \\
         h(x) &\!\! \leq \bar{\Lambda} \ \ \ &\text{for} \ x \in \partial \{ w = 0 \} \cup \partial \{ w > 0 \} \rule[-.1 in]{0 in}{.3 in} \;,
    \end{array}
\label{eq:hprops}
\end{equation}
where for any set $S \subset \R^n,$ we use $S^{\text{o}}$ to denote its interior.  Thus $h$ agrees with
$\chisub{ \{ w > 0 \} }f$ everywhere except possibly the free boundary.
(The ``gap'' mentioned in the title to this section is the fact that we won't know that $h = \chisub{ \{ w > 0 \} }f \ a.e.$ until
we show that the free boundary (that is $\partial \{ w = 0 \} \cup \partial \{ w > 0 \}$) has measure zero.)
We will show such a $w$ exists by obtaining it as a limit of functions $w_{s}$ which are solutions to the semilinear PDE:
\BVPc{Lw = \Phi_{s}(w)f}{w = g}{PDEsemilinOP}{B_1} where for $s > 0,$ $\Phi_{s}(x) := \Phi_{1}(x/s)$
and $\Phi_{1}(x)$ is a function which satisfies

\begin{enumerate}
   \item $\Phi_{1} \in C^{\infty}(\R) \;,$
   \item $0 \leq \Phi_{1} \leq 1 \;,$
   \item $\Phi_{1} \equiv 0$ for $x < 0,$ \ \ $\Phi_{1} \equiv 1$ for $x > 1,$  and
   \item $\Phi_{1}^{\prime}(x) \geq 0$ for all $x.$ 
\end{enumerate}
The function $\Phi_{s}$ has a derivative which is supported in the interval $[0,s]$ and notice that for a fixed $x,$
$\Phi_{s}(x)$ is a nonincreasing function of $s.$

If we let $H$ denote the standard Heaviside function, but make the convention that $H(0) := 0$ then we can
rewrite the PDE in Equation\refeqn{PDEverOP}as
$$Lw = H(w)f$$
to see that it is formally the limit of the PDEs in Equation\refeqn{PDEsemilinOP}\!\!.
We also define
$$\Phi_{-s}(x) := \Phi_{s}(x + s)$$ so that we will be able to ``surround'' our solutions to our obstacle problem with
solutions to our semilinear PDEs.

The following theorem seems like it should be stated somewhere, but without further smoothness assumptions on the $a^{ij}$
we could not find it within \cite{GT}, \cite{HL}, or \cite{LU}.  The proof is a fairly standard application of the method of
continuity, so we will only sketch it.

\begin{theorem}[Existence of Solutions to a Semilinear PDE]   \label{EUSolnSemi}
Given the assumptions above, for any $s \in [-1,1] \setminus \{0\}$ there exists a $w_{s}$ which satisfies
Equation\refeqn{PDEsemilinOP}\!\!.
\end{theorem}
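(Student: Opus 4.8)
The plan is to use the method of continuity, connecting the semilinear equation to a linear one whose solvability is classical. Fix $s \in [-1,1] \setminus \{0\}$ and consider the family of problems, for $t \in [0,1]$,
\begin{equation*}
   Lw = t\,\Phi_{s}(w)f \ \ \text{in} \ B_1, \qquad w = g \ \ \text{on} \ \partial B_1.
\end{equation*}
At $t=0$ this is the linear Dirichlet problem $Lw = 0$, $w = g$ on $\partial B_1$, which has a unique solution in $W^{1,2}(B_1)$ by the Lax--Milgram theorem (using uniform ellipticity of $a^{ij}$ and the fact that $g \in W^{1,2}(B_1)$ supplies an admissible boundary datum); at $t=1$ we recover Equation\refeqn{PDEsemilinOP}\!\!. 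Let $\mathcal{S} \subseteq [0,1]$ be the set of $t$ for which the $t$-problem is solvable. I would show $\mathcal{S}$ is nonempty (it contains $0$), open, and closed; connectedness of $[0,1]$ then gives $\mathcal{S} = [0,1]$ and in particular $1 \in \mathcal{S}$.

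For openness, suppose $t_0 \in \mathcal{S}$ with solution $w_0$. I would linearize: for $w$ near $w_0$, write the equation as a fixed-point problem $w = \mathcal{T}(w)$ where $\mathcal{T}(w)$ is the solution of the linear problem $Lv = t\,\Phi_s(w)f$, $v = g$ on $\partial B_1$. The map $w \mapsto \Phi_s(w)f$ is Lipschitz from $L^2(B_1)$ to $L^2(B_1)$ with constant $\|f\|_{L^\infty} \|\Phi_s'\|_{L^\infty}$, and the solution operator for $L$ with zero boundary data is bounded from $L^2$ to $W^{1,2}_0 \hookrightarrow L^2$, with operator norm controlled by $\lambda$ and the Poincar\'e constant of $B_1$. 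Hence for $t$ in a small interval around $t_0$ (small enough that $t$ times the product of these constants is $< 1$, i.e. for all $t$ in a neighborhood whose size depends only on the structural constants, not on $t_0$), $\mathcal{T}$ is a contraction on $L^2(B_1)$ and the Banach fixed point theorem produces a solution. Since the interval size is uniform, this simultaneously gives openness.

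For closedness one needs a priori bounds independent of $t$. Here I would use that $0 \le \Phi_s \le 1$, so any solution $w_t$ satisfies $0 \le Lw_t \le |f| \le \bar{\Lambda}$ weakly, $w_t = g \ge 0$ on $\partial B_1$; by the weak maximum principle $0 \le w_t$, and comparing with the solution of $Lz = \bar\Lambda$, $z = g$ on $\partial B_1$ gives $w_t \le z$, so $\|w_t\|_{L^\infty(B_1)}$ is bounded uniformly in $t$. Testing the equation with $w_t - \bar{g}$ (for a fixed $W^{1,2}$ extension $\bar g$ of the boundary data) and using ellipticity then bounds $\|w_t\|_{W^{1,2}(B_1)}$ uniformly. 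Given $t_k \to t_\infty$ in $\mathcal{S}$ with solutions $w_{t_k}$, these bounds yield a subsequence converging weakly in $W^{1,2}$ and strongly in $L^2$ to some $w_\infty$; the strong $L^2$ convergence lets one pass to the limit in the nonlinear term $\Phi_s(w_{t_k})f \to \Phi_s(w_\infty)f$ in $L^2$, and weak convergence handles the linear term, so $w_\infty$ solves the $t_\infty$-problem. Thus $\mathcal{S}$ is closed.

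The main obstacle is really the uniformity in the openness step: one must check that the contraction radius around any $t_0 \in \mathcal{S}$ depends only on $\lambda, \bar\Lambda$, and the domain (through Poincar\'e), not on the particular solution $w_0$ — this is what lets a single open-ball argument cover all of $[0,1]$ in finitely many steps. Because the nonlinearity $\Phi_s$ is globally Lipschitz with a fixed constant and $0 \le \Phi_s \le 1$ is bounded, this is straightforward here, which is why we only sketch the argument; the one technical point worth stating explicitly is the choice of the auxiliary supersolution $z$ in the $L^\infty$ bound, since it is what prevents the nonlinear term from being large. No regularity beyond $W^{1,2} \cap L^\infty$ of the solution is claimed, consistent with the merely bounded measurable hypothesis on $a^{ij}$.
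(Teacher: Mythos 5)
Your overall architecture (method of continuity: nonempty, open, closed) matches the paper, and your closedness step is fine: the uniform bounds coming from $0\le\Phi_s\le 1$ plus energy estimates and Rellich compactness are enough to pass to the limit in $\Phi_s(w_{t_k})f$. The gap is in the openness step. The fixed-point map you define, $\mathcal{T}(w) = $ solution of $Lv = t\,\Phi_s(w)f$ with $v=g$ on $\partial B_1$, has contraction constant $t\,\|\Phi_s'\|_{L^\infty}\,\bar\Lambda\,\|S\|_{L^2\to L^2}$, where $\|S\|$ is the norm of the solution operator. The smallness condition therefore involves the full parameter $t$, not the increment $|t-t_0|$, and the map makes no use whatsoever of the known solution $w_0$ at $t_0$. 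So the argument only produces solutions for $t$ in an initial interval $[0,\,1/K)$ with $K = \|\Phi_s'\|_{L^\infty}\bar\Lambda\|S\|$, and there is no way to continue past $1/K$; note also that $K$ is not a harmless structural constant, since $\|\Phi_s'\|_{L^\infty}\sim \|\Phi_1'\|_{L^\infty}/|s|$ blows up as $s\to 0$, so ``$tK<1$ for all $t\in[0,1]$'' certainly cannot be arranged. Recentering the iteration at $w_0$ does not help: the difference $v = w-w_0$ satisfies an equation whose nonlinear term $t\,[\Phi_s(w_0+v)-\Phi_s(w_0)]f$ still has Lipschitz constant $t\,\|\Phi_s'\|_{L^\infty}\bar\Lambda$ in $v$, which is not made small by taking $|t-t_0|$ small. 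Your closing remark that the uniformity is ``straightforward because $\Phi_s$ is globally Lipschitz'' is exactly the point that fails.

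What rescues openness at an arbitrary $t_0$ is not smallness but sign: the linearization at $w_0$ is $v \mapsto Lv - t\,\Phi_s'(w_0)\,f\,v$ (weakly, $\int a^{ij}D_i v D_j\phi + t\,\Phi_s'(w_0) f\, v\,\phi$), and since $\Phi_s'\ge 0$ and $f>0$ the zeroth-order coefficient has the favorable sign, so this operator is invertible no matter how large the coefficient is (Theorem 8.3 of \cite{GT}, or Lax--Milgram). The paper then applies the implicit function theorem in Hilbert space to conclude openness; this is the step your contraction scheme cannot replace. (Alternatively, one could bypass the continuity method entirely by exploiting the same monotonicity of $\Phi_s$ through monotone operator theory or a sub/supersolution iteration, but some use of the sign of $\Phi_s'\,f$ is essential.) A smaller point: with the paper's sign convention ($L$ may equal $\Delta$, and $Lw_t = t\Phi_s(w_t)f \ge 0$), your comparison inequalities in the closedness step are reversed --- the subsolution property bounds $w_t$ from above by its boundary data, and comparison with $Lz=\bar\Lambda$ bounds it from below --- though the uniform $L^\infty$ bound you want is still true.
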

\begin{pf}
We provide only a sketch.  Fix $s \in [-1,1] \setminus \{0\}.$ Let $T$ be the set of $t \in [0,1]$ such that there is a
unique solution to the problem
\BVP{Lw = t\Phi_{s}(w)f}{w = g}{PDEtsemilinOP}{B_1}
We know immediately that $T$ is nonempty by observing that Theorem 8.3 of \cite{GT} shows us that $0 \in T.$
Now we need to show that $T$ is both open and closed.

As in \cite{LSW} we let $\tau^{1,2}$ denote the Hilbert space formed as the quotient space
$W^{1,2}(B_1) \slash W^{1,2}_{0}(B_1)$ and then we define the Hilbert space
\begin{equation}
   H := W^{1,2}_{0}(B_1)^{\ast} \oplus \tau^{1,2} \;,
\label{eq:HilbDirSum}
\end{equation}
where $W^{1,2}_{0}(B_1)^{\ast}$ denotes the dual space to $W^{1,2}_{0}(B_1).$
Next we define the nonlinear operator $L^{t}: W^{1,2}(B_1) \rightarrow H.$  For a
function $w \in W^{1,2}(B_1),$ we set
\begin{equation}
    L^{t}(w) = \ell^{t}(w) \oplus \mathcal{R}(w) \;,
\label{eq:GenLtDef}
\end{equation}
where $\mathcal{R}(w)$ is simply the restriction from $w$ to its boundary values in $\tau^{1,2},$ and for
any $\phi \in W^{1,2}_{0}(B_1)$ we let
\begin{equation}
    [\ell^{t}(w)](\phi) := \int_{B_1} \left( a^{ij}(x) D_{i} w D_{j} \phi + t\Phi_{s}(w) f \phi \right) \; dx \;.
\label{eq:ellmeat}
\end{equation}

In order to show that $T$ is open we need the implicit function theorem in Hilbert space.  In order to use that theorem
we need to show that the Gateaux derivative of $L^t$ is invertible.  The relevent part of that
computation is simply the observation that the Gateaux derivative of $\ell^t,$ which we denote by $D\ell^{t},$ is
invertible.  Letting $v \in W^{1,2}(B_1)$ we have
\begin{equation}
    \left[ \rule[-.1 in]{0 in}{.3 in} [D\ell^{t}(w)](\phi) \right] (v) =
    \int_{B_1} \left( a^{ij}(x) D_{i} v D_{j} \phi + t\Phi_{s}^{\prime}(w) f v \phi \right) \; dx \;.
\label{eq:GatEll}
\end{equation}
The function $d(x) := t \Phi_{s}^{\prime}(w(x)) f(x)$ is a nonnegative bounded function of $x$ and so we can apply
Theorem 8.3 of \cite{GT} again in order to verify that $L^{t}$ is invertible.

In order to show that $T$ is closed we let $t_n \rightarrow \tilde{t},$ and assume that $\{t_n\} \subset T.$  We
let $w_n$ solve
\BVPc{Lw = t_n\Phi_{s}(w)f}{w = g}{PDEtnsemilinOP}{B_1}
and observe that the right hand side of our PDE is bounded by $\bar{\Lambda}.$  Knowing this information we can use
Corollary 8.7 of \cite{GT} to conclude $||w_n||_{W^{1,2}(B_1)} \leq C,$ and we can use the 
theorems of De Giorgi, Nash, and Moser to conclude that for any $r < 1$ we have
$||w_n||_{C^{\alpha}(\closure{B_r})} \leq C.$  Elementary functional analysis allows us to conclude that a
subsequence of our $w_n$ will converge weakly in $W^{1,2}(B_r)$ and strongly in $C^{\alpha/2}(\closure{B_r})$
to a function $\tilde{w}.$  Using a simple diagonalization argument we can show that $\tilde{w}$ satisfies
\BVPc{Lw = \tilde{t}\Phi_{s}(w)f}{w = g}{PDEttildesemilinOP}{B_1}
and this fact show us that $\tilde{t} \in T.$
\end{pf}

We will also need the following comparison results:

\begin{proposition}[Basic Comparisons]  \label{BCsemiPDE}
Under the assumptions of the previous theorem and letting $w_{s}$ denote the solution to
Equation\refeqn{PDEsemilinOP}\!\!, we have the following comparison results:
\begin{enumerate}
   \item $s > 0 \ \Rightarrow w_s \geq 0 \;,$
   \item $s < 0 \ \Rightarrow w_s \geq s \;,$
   \item $t < s \ \Rightarrow w_t \geq w_s \;,$
   \item $t < 0 < s \ \Rightarrow w_s \leq w_{t} + s - t \;,$ and
   \item For a fixed $s \in [-1,1] \setminus \{0\}$ the solution, $w_{s}$ is unique.
\end{enumerate}
\end{proposition}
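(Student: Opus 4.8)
My plan is to obtain all five parts from the weak comparison principle for $L$, carried out through Caccioppoli-type energy estimates with truncated test functions; a pointwise maximum principle is unavailable because the $a^{ij}$ are only bounded and measurable, but the weak formulation together with one-sided truncations does everything we need. Two structural facts will be used repeatedly: (a) each $\Phi_s$ is nondecreasing, vanishes identically to the left of its activation point (namely $0$ when $s>0$ and $s$ when $s<0$), and equals $1$ identically to the right of $\max\{0,s\}$; and (b) for fixed $x$ the value $\Phi_s(x)$ is nonincreasing in $s$. Together with $f\geq 0$, facts (a) and (b) say that the nonlinearity $w\mapsto\Phi_s(w)f$ is nondecreasing in $w$ and that $\Phi_t(y)\geq\Phi_s(y)$ for every $y$ whenever $t<s$. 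In each part the only delicate point is that the truncation used as test function lies in $W^{1,2}_0(B_1)$, which is precisely where the hypotheses $g\geq 0$ (and $s<0$ in part (2)) are used.

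For part (1) I would test the weak form of $Lw_s=\Phi_s(w_s)f$ against $\phi:=\max\{-w_s,0\}$, which lies in $W^{1,2}_0(B_1)$ since $w_s=g\geq 0$ on $\partial B_1$. The right side vanishes because $\Phi_s\equiv 0$ on $\{w_s<0\}$ (here $s>0$), where $\phi\neq 0$, while the left side collapses to $\int_{\{w_s<0\}}a^{ij}D_iw_s\,D_jw_s\geq\lambda\int_{\{w_s<0\}}|Dw_s|^2$. Hence $Dw_s=0$ a.e.\ on $\{w_s<0\}$, so $\phi$ is a constant with zero trace, i.e.\ $w_s\geq 0$. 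Part (2) is the same computation with $\phi:=\max\{s-w_s,0\}$, which lies in $W^{1,2}_0(B_1)$ because $g\geq 0>s$; now $\Phi_s\equiv 0$ on $\{w_s<s\}$, the right side again drops out, and the energy on the left forces $w_s\geq s$. (I would also record that $Lw_s=\Phi_s(w_s)f\geq 0$ makes $w_s$ a subsolution, so $w_s\leq\sup_{\partial B_1}g$.)

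Parts (3) and (5) use the same mechanism applied to two solutions. For (3), fact (b) makes $w_s$ a supersolution, and $w_t$ a subsolution, of the \emph{other} equation: $Lw_s=\Phi_s(w_s)f\leq\Phi_t(w_s)f$ and $Lw_t=\Phi_t(w_t)f\geq\Phi_s(w_t)f$. Subtracting the two weak formulations, testing against the appropriate one-sided truncation of $w_t-w_s$, and using ellipticity together with the monotonicity of the relevant $\Phi$ kills the energy and orders $w_t$ and $w_s$. For (5) I would apply this to two solutions $w,\tilde w$ of the single equation $Lw=\Phi_s(w)f$: monotonicity of $w\mapsto\Phi_s(w)f$ gives the favorable sign against both $(w-\tilde w)^+$ and $(\tilde w-w)^+$, so $w\equiv\tilde w$. (Uniqueness is in any case implicit in the method-of-continuity proof of Theorem~\ref{EUSolnSemi}, where the set of uniquely solvable problems was shown to equal $[0,1]$.)

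I expect part (4) to be the only step that is not entirely routine, and it is where I would concentrate. Rather than comparing $w_s$ to $w_t$ directly, compare $w_s$ with the explicit shifted competitor $z:=w_t+(s-t)$. A constant shift does not affect $L$, so $Lz=\Phi_t(w_t)f$. The point is to invoke part (2): since $t<0$ we have $w_t\geq t$, hence $z=w_t+(s-t)\geq s$ pointwise, and therefore $\Phi_s(z)\equiv 1$ by fact (a), while $\Phi_t(w_t)\leq 1$ always; thus $Lz=\Phi_t(w_t)f\leq\Phi_s(z)f$, so $z$ is a supersolution of the $s$-equation. On $\partial B_1$ we have $z=g+(s-t)\geq g$ because $s-t>0$, so the comparison principle, once more the truncated-test-function argument now applied to $(w_s-z)^+$, gives $w_s\leq z=w_t+s-t$. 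The feature worth stressing is that the argument closes only because the lower bound $w_t\geq t$ from part (2) provides exactly the slack needed for the shift $s-t$ to lift $z$ into the region where $\Phi_s$ has already saturated to $1$; this is special to the case $t<0<s$.
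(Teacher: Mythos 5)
Your parts (1), (2) and (5) are correct (the paper omits these as routine), and your part (4) is in substance the paper's own proof: the paper sets $\Omega^{-}:=\{w_s-w_t>s-t\}$, uses part (2) to see $\Omega^{-}\subset\{w_s>s\}$, so that $\Phi_s(w_s)\equiv 1$ there and $L(w_s-w_t)=(1-\Phi_t(w_t))f\geq 0$, and concludes with the weak maximum principle on $\Omega^{-}$; your shifted competitor $z=w_t+(s-t)$ together with the test function $(w_s-z)^{+}$ is the same mechanism written out at the level of energy estimates (which is how the weak maximum principle is proved for bounded measurable $a^{ij}$ in any case). You also isolate the correct key point: part (2) is what lifts the competitor into the region where $\Phi_s$ has saturated to $1$.

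The gap is in part (3), where you say the truncation argument ``orders $w_t$ and $w_s$'' without committing to a direction. If you push your own relations through ($Lw_t=\Phi_t(w_t)f\geq\Phi_s(w_t)f$ and $Lw_s=\Phi_s(w_s)f\leq\Phi_t(w_s)f$) and test with $(w_t-w_s)^{+}$, then on $\{w_t>w_s\}$ you have $\Phi_t(w_t)\geq\Phi_t(w_s)\geq\Phi_s(w_s)$, the energy term and the right-hand side have opposite signs, both vanish, and you conclude $w_t\leq w_s$ --- the \emph{reverse} of the printed inequality. This is not an artifact of the method: since $\Phi_s(x)$ is nonincreasing in $s$ for each fixed $x$, the equation with the smaller parameter has the pointwise larger nonnegative right-hand side, and under the paper's sign convention (where $L=\Delta$ is permitted) a larger right-hand side pushes the solution down; a one-dimensional example with $f\equiv 1$ confirms $w_t\leq w_s$ with strict inequality in general. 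So statement (3) as printed cannot be established and appears to be a misprint; note that the direction $w_t\leq w_s$ is also what the rest of the paper actually uses, since combined with part (4) it gives $0\leq w_s-w_t\leq s-t$ for $t<0<s$, which is exactly the ``surrounding'' and squeezing of the obstacle-problem solution needed in the existence theorem. You should either prove the corrected inequality explicitly (your three-line truncation argument above does it) or at least flag the discrepancy; as written, the vague sentence conceals a step that would fail if aimed at the statement as printed.
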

\begin{pf}
All five statements are proved in very similar ways, and their proofs are fairly standard, but for the
convenience of the reader, we will prove the fourth statement.  We assume that it is false, and we let
\begin{equation}
    \Omega^{-} := \{ w_{s} - w_{t} > s - t \} \;.
\label{eq:OmMinusDef}
\end{equation}
Obviously $w_s - w_{t} = s - t$ on $\partial \Omega^{-}.$
Next, observe that by the second statement we know that $\Omega^{-}$ is a subset of $\{ w_{s} > s \}.$
Thus, within $\Omega^{-}$ we have $L(w_s - w_t) = 1 - \Phi_{t}(w_{t}) \geq 0$ and so if $\Omega^{-}$
is not empty, then we contradict the weak maximum principle.
\end{pf}

We are now ready to give our existence theorem for our ``problem with the gap.''

\begin{theorem}[Existence Theorem]   \label{ExistPWG}
Given the assumptions above, there exists a pair $(w,h)$ such that $w \geq 0$ satisfies
Equation\refeqn{PDEverOPh}with an $h \geq 0$ which satisfies Equation\refeqn{hprops}\!\!. 
\end{theorem}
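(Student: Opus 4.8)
The natural approach is to obtain $w$ as a limit of the semilinear solutions $w_s$ as $s \to 0^+$, using the monotonicity from Proposition~\ref{BCsemiPDE}. First I would fix a sequence $s_k \downarrow 0$. By part (3) of Proposition~\ref{BCsemiPDE} the family $\{w_{s_k}\}$ is monotone nondecreasing as $k$ increases (since $s_k$ decreases), and by part (1) each $w_{s_k} \geq 0$. To get an upper bound, compare with the solution $W$ of $LW = 0$ in $B_1$, $W = g$ on $\partial B_1$: since $\Phi_s \geq 0$ and $f \geq 0$, each $w_s$ is a subsolution relative to... actually more carefully, $L(w_s - W) = \Phi_s(w_s) f \geq 0$, so by the weak maximum principle $w_s \leq W$ in $B_1$. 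Hence $0 \leq w_{s_k} \leq W$, the sequence is monotone and uniformly bounded in $L^\infty$, so it converges pointwise a.e. (and in $L^2$) to some $w$ with $0 \leq w \leq W$. The right-hand sides $\Phi_{s_k}(w_{s_k}) f$ are uniformly bounded by $\bar\Lambda$, so Corollary~8.7 of \cite{GT} gives a uniform $W^{1,2}(B_1)$ bound, and a subsequence converges weakly in $W^{1,2}(B_1)$ — the weak limit must be $w$, so $w \in W^{1,2}(B_1)$ and $w = g$ on $\partial B_1$ in the trace sense.

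Next I would identify the limiting equation. Since $\Phi_{s_k}(w_{s_k}) f$ is bounded in $L^\infty$, a further subsequence converges weak-$*$ in $L^\infty(B_1)$ to some $h$ with $0 \leq h \leq \bar\Lambda$; passing to the limit in the weak formulation \refeqn{Ldef2} gives $Lw = h$ in $B_1$, i.e.\ \refeqn{PDEverOPh}. It remains to verify the three pointwise properties of $h$ in \refeqn{hprops}. For the first: on the open set $\{w = 0\}^{\text o}$ we have $Lw = 0$ there in the distributional sense (since $w$ is identically $0$ there), hence $h = 0$ a.e.\ on that set. For the second: on $\{w > 0\}^{\text o}$, the monotone convergence $w_{s_k} \uparrow w$ together with the De Giorgi–Nash–Moser continuity of each $w_{s_k}$ (uniform $C^\alpha$ bounds on interior balls, as in the previous proof) yields locally uniform convergence $w_{s_k} \to w$ on compact subsets of $\{w>0\}^{\text o}$; on such a compact set $w \geq c > 0$, so for $k$ large $w_{s_k} > s_k$ there, whence $\Phi_{s_k}(w_{s_k}) \equiv 1$, forcing $h = f$ a.e.\ there. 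The third bound $h \leq \bar\Lambda$ on the free boundary is already subsumed in $0 \leq h \leq \bar\Lambda$ everywhere.

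I also need $w \geq 0$ (immediate from $w_{s_k} \geq 0$) and should check that $w$ is genuinely nonnegative and that the construction does not depend on trivial degeneracies; the monotone limit handles this cleanly. The main obstacle is the second property in \refeqn{hprops}: proving $h = f$ on $\{w>0\}^{\text o}$ requires upgrading the a.e.\ monotone convergence $w_{s_k}\to w$ to \emph{locally uniform} convergence, so that the threshold $w_{s_k} > s_k$ can be enforced on compact subsets. This is where the uniform interior $C^\alpha$ estimates (De Giorgi–Nash–Moser, applied to the uniformly bounded right-hand sides) are essential: they give equicontinuity, and combined with pointwise monotone convergence and Dini's theorem one gets uniform convergence on compacta. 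Everything else is a routine passage to the limit in the weak formulation.
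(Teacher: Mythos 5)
Your proposal is correct and follows essentially the same route as the paper: pass to the limit $s_k \downarrow 0$ in the semilinear problems using the comparisons of Proposition~\ref{BCsemiPDE}, uniform $W^{1,2}$ and interior De Giorgi--Nash--Moser bounds, take $h$ as the weak-$*$ $L^\infty$ limit of the right-hand sides, and identify $h=f$ on $\{w>\gamma\}$ via locally uniform convergence (so $\Phi_{s_k}(w_{s_k})\equiv 1$ there) and $h=0$ on $\{w=0\}^{\mathrm o}$ since $\nabla w\equiv 0$ there. Your extra touches (the $L$-harmonic barrier for the $L^\infty$ bound and Dini's theorem for uniform convergence of the monotone sequence) are fine but not a different method.
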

\begin{pf}
Using the last proposition, we can find a sequence $s_n \rightarrow 0,$ and a function $w$ such that
(with $w_n$ used as an abbreviation for $w_{s_n}$) we have strong convergence of the $w_n$ to $w$ in
$C^{\alpha}(\closure{B_r})$ for any $r < 1$ and weak convergence of the $w_n$ to $w$ in $W^{1,2}(B_1).$
Elementary functional analysis allows us to conclude that the functions $\chisub{ \{ w_n > 0 \} }f$ converge
weak-$\ast$ in $L^{\infty}(B_1)$ to a function $h$ which automatically satisfies $0 \leq h \leq \bar{\Lambda}.$
By looking at the equations satisfied by the $w_n$'s and using the convergences, it then follows very easily that
the function $w$ satisfies Equation\refeqn{PDEverOPh}\!\!, but it remains to verify that the function $h$ is
equal to $\chisub{ \{ w > 0 \} }f$ away from the free boundary.

Since the limit is continuous, the set $\{ w > 0 \}$ is already open, and by the uniform convergence of the $w_n$'s
we can say that on any set of the form $\{ w > \gamma \}$ (where $\gamma > 0$) we will have
$\Phi_{s_n}(w_n) \equiv 1$ once $n$ is sufficiently large.  Thus we must have $h = f$ on this set.  On the other hand,
in the interior of the set $\{ w = 0 \}$ we have $\nabla w \equiv 0,$ and so it is clear that in that set $h \equiv 0 \ a.e.$
\end{pf}

%%%%%%%%%%%%%%%%%%%%%%%%%%%%%%%%%%%%%%%%%%%%%%%%%%%%
%%%%%%%%%%%%%%%%%%%%%%%%%%%%%%%%%%%%%%%%%%%%%%%%%%%%
%%%%%%%%%%%%%%%%%%%%%%%%%%%%%%%%%%%%%%%%%%%%%%%%%%%%

\newsec{Regularity, Nondegeneracy, and Closing the Gap}{OpRegNonDeg}

Now we begin with a pair $(w,h)$ like the pair given by Theorem\refthm{ExistPWG}\!\!, except that we
do not insist that it have any particular boundary data on $\partial B_1.$  In other words, in this section $w$ will
always satisfy
\begin{equation}
    L(w) = h \ \ \text{in} \ B_1,
\label{eq:InterhEqn}
\end{equation}
for a function $h$ which satisfies Equation\refeqn{hprops}\!\!.  In addition we will assume
Equations\refeqn{FctSpaces}and\refeqn{UniformEllipAndPos}hold.  By the end of this section we
will know that the set $\partial \{ w = 0\}$ has Lebesgue measure zero and so $w$ actually satisfies:
\begin{equation}
    L(w) = \chisub{ \{ w > 0 \} }f \ \ \text{in} \ B_1,
\label{eq:basicobprob}
\end{equation}
which will allow us to forget about $h$ afterward.  
Before we eliminate $h,$ we have two main results: First, $w$ enjoys a parabolic bound
from above at any free boundary point, and second, $w$ has a quadratic nondegenerate growth from such points.
It turns out that these properties are already enough to ensure that the free boundary has measure zero.

%%%%%%%%%%%%%%%%%%%%%%%%%%%%%%%%%%%%%%%%%%%%%%%%%%%%%
\begin{lemma} \label{parabd}
Assume that $w$ satisfies everything described above, but in addition, assume that 
$w(0)=0.$ Then there exists a $\tilde{C}$ such that
\begin{equation} \parallel w \parallel_{L^{\infty}(B_{1/2})} \; \leq \tilde{C}.
\end{equation}
\end{lemma}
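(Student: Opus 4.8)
The plan is to exploit the fact that $w$ solves $Lw = h$ with $0 \le h \le \bar\Lambda$ globally on $B_1$, so that $w$ is a subsolution of an equation with bounded right-hand side, and then combine an interior $L^\infty$ estimate with the hypothesis $w(0)=0$ to rule out a large constant. First I would observe that since $h \ge 0$, $w$ is a nonnegative supersolution — no wait, $Lw = h \ge 0$ means (with the paper's sign convention, $L=\Delta$ for the Laplacian) $w$ is \emph{subharmonic}-like, i.e.\ a subsolution. So the natural tool is the local maximum principle / De Giorgi--Nash--Moser estimate for subsolutions of divergence-form equations with bounded inhomogeneity: there is a constant $C = C(n,\lambda,\Lambda)$ such that
\begin{equation}
\sup_{B_{1/2}} w \;\le\; C\left( \left(\int_{B_{3/4}} (w^+)^2\right)^{1/2} + \|h\|_{L^\infty(B_{3/4})} \right).
\end{equation}
Since $\|h\|_{L^\infty} \le \bar\Lambda$, it remains only to bound the $L^2$ norm of $w^+ = w$ (recall $w\ge 0$) on $B_{3/4}$.

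The main obstacle, then, is controlling $\|w\|_{L^2(B_{3/4})}$ using only $w(0)=0$, since a priori $w$ could still be large somewhere in $B_{3/4}$. Here is where I would bring in the \emph{nondegeneracy} direction of the problem — but that lemma comes \emph{after} this one in the paper, so I cannot use it. Instead I would argue as follows: since $w\ge 0$ and $Lw = h \ge 0$, the function $w$ is a subsolution; apply the weak Harnack inequality to... no, weak Harnack is for supersolutions. Let me reconsider: the cleanest self-contained route is to compare $w$ with the solution $v$ of $Lv = \bar\Lambda$ in $B_1$, $v = 0$ on $\partial B_1$ — then $v \le 0$... the signs need care. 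Actually the robust approach is: write $w = w_1 + w_2$ where $L w_1 = h$ in $B_1$ with $w_1 = 0$ on $\partial B_1$, and $L w_2 = 0$ in $B_1$ with $w_2 = w$ on $\partial B_1$; the first piece is bounded by $C\bar\Lambda$ by the global $L^\infty$ bound for the Dirichlet problem with bounded data (Theorem 8.16 of \cite{GT}-type estimate), and the second piece $w_2$ is a nonnegative $L$-harmonic function (nonnegative because $w \ge 0$ and $w_1 \le 0$, again a sign check, so $w_2 = w - w_1 \ge w \ge 0$) to which the Harnack inequality of De Giorgi--Nash--Moser applies: $\sup_{B_{1/2}} w_2 \le C\, w_2(0)$. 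Finally $w_2(0) = w(0) - w_1(0) = -w_1(0) \le C\bar\Lambda$ since $w(0)=0$. Combining, $\|w\|_{L^\infty(B_{1/2})} \le C\bar\Lambda =: \tilde C$, with $\tilde C = \tilde C(n,\lambda,\Lambda,\bar\Lambda)$.

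So the key steps, in order, are: (i) decompose $w = w_1 + w_2$ via the inhomogeneous Dirichlet problem on $B_1$; (ii) bound $w_1$ in $L^\infty(B_1)$ by $C(n,\lambda,\Lambda)\bar\Lambda$ using the standard estimate for divergence-form equations with bounded right-hand side and zero boundary data; (iii) check signs to conclude $w_2 \ge 0$ and $w_2$ is $L$-harmonic; (iv) apply Harnack to $w_2$ on $B_{1/2} \subset B_{3/4} \subset B_1$ to get $\sup_{B_{1/2}} w_2 \le C w_2(0)$; (v) evaluate $w_2(0) = -w_1(0) \le \|w_1\|_{L^\infty} \le C\bar\Lambda$ using $w(0)=0$; (vi) add the bounds. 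The step I expect to require the most care is the bookkeeping of signs in the comparison (the paper's convention $L=\Delta$ rather than $-\Delta$ makes $Lw \ge 0$ the subsolution condition), and making sure the constant $\tilde C$ depends only on the structural data $n,\lambda,\Lambda,\bar\Lambda$ and not on $w$ itself — in particular \emph{not} on any boundary data, which is exactly the point of having the hypothesis $w(0)=0$ available to kill the otherwise-uncontrolled $L$-harmonic part.
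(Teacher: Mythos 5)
Your final argument is exactly the paper's proof: the same decomposition $w = w_1 + w_2$ into the inhomogeneous Dirichlet part (bounded via Theorem 8.16 of \cite{GT}) and the nonnegative $L$-harmonic part controlled by the Harnack inequality together with $w_2(0) = -w_1(0)$, so the proposal is correct and essentially identical to the paper's argument. The only difference is your preliminary detour through the local maximum principle, which you rightly abandon before settling on the decomposition.
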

\begin{proof} Let $u$ solve the following PDE:
\BVPhao{Lu = h}{u=0}{PDEboundu}{B_1}
Then Theorem 8.16 of \cite{GT} gives
\begin{equation} \parallel u \parallel_{L^{\infty}(B_{1})} \; \leq C_1.
\end{equation}
Now, consider the solution to:
\BVPhao{Lv = 0 }{v=w}{PDEboundv}{B_1}
Notice that $u(x)+v(x)=w(x)$, and in particular $0=w(0)=u(0)+v(0)$.
Then by the Weak Maximum Principle and the Harnack Inequality, we have
\begin{equation} \sup_{B_{1/2}}|v|=\sup_{B_{1/2}} v \leq C_2 \inf_{B_{1/2}} v \leq C_2 v(0) \leq C_2 (-u(0)) \leq C_2 \cdot C_1.
\end{equation}
Therefore \begin{equation} \parallel w \parallel_{L^{\infty}(B_{1/2})}\leq C
\end{equation}
\end{proof}
%%%%%%%%%%%%%%%%%%%%%%%%%%%%%%%%%%%%%%%%%%%%%%%%%%%%
\begin{theorem}[Optimal Regularity]  \label{OpReg}
If $0 \in \partial \{w > 0\},$ then for any $x \in B_{1/2}$ we have
\begin{equation}
   w(x) \leq 4\tilde{C}|x|^2
\label{eq:OpReg}
\end{equation}
where $\tilde{C}$ is the same constant as in the statement of Lemma\refthm{parabd}\!\!.
\end{theorem}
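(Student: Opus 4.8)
The plan is to exploit the scaling structure of the problem together with the parabolic bound of Lemma~\ref{parabd}, applied at a suitable free boundary point near $x$. The key observation is that the hypotheses on $(w,h)$ are preserved under the natural parabolic rescaling $w_r(y) := w(x_0 + ry)/r^2$, which satisfies an equation of the form $\tilde L w_r = \tilde h$ in $B_1$ with $\tilde a^{ij}(y) = a^{ij}(x_0+ry)$ (still uniformly elliptic with the same constants $\lambda,\Lambda$) and $\tilde h(y) = h(x_0+ry)$ (still satisfying \refeqn{hprops} with the same $\bar\Lambda$), provided $B_r(x_0) \subset B_1$. Since the constant $\tilde C$ of Lemma~\ref{parabd} depends only on $n, \lambda, \Lambda, \bar\lambda, \bar\Lambda$, it is the \emph{same} for every such rescaling.

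First I would fix $x \in B_{1/2}$ with $w(x) > 0$ (the inequality is trivial when $w(x) = 0$), and let $d := \operatorname{dist}(x, \partial\{w > 0\})$. I would locate a point $x_0 \in \partial\{w>0\}$ with $|x - x_0| = d$. There are two regimes. If $B_{2d}(x_0)$ is contained in $B_1$ — which, after checking the geometry, holds when $d$ is not too large relative to $1 - |x_0|$ — then I rescale: set $w_{2d}(y) = w(x_0 + 2dy)/(2d)^2$. This $w_{2d}$ satisfies the hypotheses of Lemma~\ref{parabd} on $B_1$ with $w_{2d}(0) = 0$, so $\|w_{2d}\|_{L^\infty(B_{1/2})} \le \tilde C$. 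Since $(x - x_0)/(2d)$ has norm $1/2$, evaluating gives $w(x) = (2d)^2 w_{2d}((x-x_0)/(2d)) \le 4\tilde C d^2 \le 4\tilde C |x|^2$, using $d \le |x - x_0| \le |x| + |x_0|$ — here I need to be slightly careful and instead use $d \le |x|$, which holds because $0 \in \partial\{w>0\}$ is itself a point where $w$ vanishes, so $d = \operatorname{dist}(x,\partial\{w>0\}) \le |x - 0| = |x|$. That gives the bound cleanly.

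The remaining regime is when $d$ is comparable to the full scale, i.e. $B_{2d}(x_0)$ is not interior to $B_1$; equivalently $d$ is bounded below by a fixed fraction of $1$. In that case $|x| \ge $ some fixed constant as well (since $d \le |x|$), so it suffices to bound $w(x)$ by an absolute constant, and this follows directly from Lemma~\ref{parabd} applied at $0$: $w(x) \le \|w\|_{L^\infty(B_{1/2})} \le \tilde C \le 4\tilde C |x|^2$ once $|x|^2 \ge 1/4$, and for the intermediate range one adjusts the radius in the interior rescaling (e.g. rescale on $B_r(x_0)$ for the largest admissible $r$ and absorb the loss into the factor $4$). The main obstacle I anticipate is precisely this bookkeeping: verifying that the constant coming out is exactly $4\tilde C$ rather than some larger multiple, which forces the particular choice of rescaling radius ($2d$, landing $x$ at radius $1/2$ where Lemma~\ref{parabd} applies) and the observation $d \le |x|$ to do all the work without slack. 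Everything else — ellipticity constants, the structure conditions \refeqn{hprops}, the Harnack step — transfers automatically under scaling, so no new analytic input beyond Lemma~\ref{parabd} is needed.
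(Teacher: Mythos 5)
Your first regime is correct, and it is essentially the paper's argument: rescale parabolically about a free boundary point, observe that the hypotheses (ellipticity constants, the bounds in \refeqn{hprops}, and the vanishing at the center) are invariant under $w \mapsto w(x_0+r\cdot)/r^2$ with the \emph{same} constants, and read off the bound from Lemma~\ref{parabd}. The genuine gap is the second regime, i.e.\ the case $B_{2d}(x_0)\not\subset B_1$. This case really does occur with $|x|$ well below $1/2$: even though $d\le |x|$, the nearest free boundary point $x_0$ can sit close to $\partial B_1$ (for instance $|x|\approx 0.4$, $d\approx 0.35$, $|x_0|\approx 0.75$). There, both of your proposed fixes fail. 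The fallback ``$w(x)\le\tilde C\le 4\tilde C|x|^2$ once $|x|^2\ge 1/4$'' is vacuous, since every $x\in B_{1/2}$ has $|x|^2<1/4$, and the target $4\tilde C|x|^2$ can be much smaller than the crude bound $\tilde C$. And ``rescale on $B_r(x_0)$ for the largest admissible $r$'' forces $r\le 1-|x_0|<2d$, so after rescaling the point $x$ lands at radius $d/r>1/2$, outside the ball $B_{1/2}$ on which Lemma~\ref{parabd} gives any control; no bookkeeping with the factor $4$ recovers it, because the lemma simply says nothing about that point.

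The repair is to drop the nearest-point construction and scale about the one free boundary point you are handed, namely the origin. You already noted that $0\in\partial\{w>0\}$ forces $w(0)=0$ (continuity plus $w\ge 0$), which is what gave you $d\le|x|$; but the more useful consequence is that $B_{2|x|}(0)\subset B_1$ automatically for every $x\in B_{1/2}$, so your regime-one computation with $x_0=0$ and $2d$ replaced by $2|x|$ applies with no case distinction: $w(x)=(2|x|)^2\,w_{2|x|}\!\left(x/(2|x|)\right)\le 4\tilde C|x|^2$. This is exactly the paper's proof, phrased there contrapositively: if $w(x_1)>4\tilde C|x_1|^2$ for some $x_1\in B_{1/2}$, then the blow-up $u_\gamma(y)=\gamma^2 w(y/\gamma)$ with $\gamma=1/(2|x_1|)>1$ still satisfies the hypotheses of Lemma~\ref{parabd} and exceeds $\tilde C$ at the point $x_1/(2|x_1|)\in\partial B_{1/2}$, a contradiction. (Your regime-one conclusion $w(x)\le 4\tilde C d^2$ is a stronger, distance-to-the-free-boundary bound when it applies, but it is not needed for the stated theorem.)
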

\begin{proof} By the previous lemma, we know $\parallel w \parallel_{L^{\infty}(B_{1/2})} \; \leq \tilde{C}$.
Notice that for any $\gamma > 1,$
\begin{equation} u_{\gamma}(x):=\gamma^2 w \left(\frac{x}{\gamma}\right)
\end{equation}
is also a solution to the same type of problem on $B_1$, but with a new operator $\tilde{L},$ and with a new function
$\tilde{f}$ multiplying the characteristic function on the right hand side. On the other hand, the new operator has the same
ellipticity as the old operator, and the new function $\tilde{f}$ has the same bounds that $f$ had.
Suppose there exist some point $x_1 \in B_{1/2}$ such that
\begin{equation} w(x_1)> 4\tilde{C}|x_1|^2.
\end{equation}
Then since $\frac{1}{2|x_1|}>1$ and since $\frac{x_1}{2|x_1|} \in \partial B_{\frac{1}{2}}$, we have
\begin{equation} u_{\left(\frac{1}{2|x_1|}\right)}\left(\frac{x_1}{2|x_1|}\right) = \frac{1}{4|x_1|^2}w(x_1) > \tilde{C} \;,
\end{equation}
which contradicts Lemma\refthm{parabd}\!\!.
\end{proof}
%%%%%%%%%%%%%%%%%%%%%%%%%%%%%%%%%%%%%%%%%%%%%%%%%%%%

Now we turn to the nondegeneracy statement.  The first thing we need is a variant of the following result from \cite{LSW}:
\begin{lemma}[Corollary 7.1 of \cite{LSW}]    \label{C71LSW}
Suppose $\mu$ is a nonnegative measure supported in $C$ which we assume is a compact subset of $B_1.$
Suppose $L$ and $\tilde{L}$ are divergence form elliptic operators exactly
of the type considered in this work, and assume that their constants of ellipticity are all contained in the
interval of positive numbers: $[\bar{\lambda}, \bar{\Lambda}].$  If
\BVPbc{Lu = \tilde{L} \tilde{u} = \mu}{u = \tilde{u} = 0}{LSW71sit}
then there exists a constant $K = K(n, C, \bar{\lambda}, \bar{\Lambda})$ such that for all $x \in C$ we have
$$ K^{-1} u(x) \leq \tilde{u}(x) \leq K u(x) \;.$$
\end{lemma}

We need to do away with the restriction that $\mu$ be supported on a compact subset of $B_1,$ but we can restrict our
attention to much simpler nonnegative measures.  In fact, the following lemma is good enough for our purposes:

%%%%%%%%%%%%%%%%%%%%%%%%%%%%%%%%%%%%%%%%%%%%%%%%%%%%

\begin{lemma}   \label{LSW71Var}
Assume $L$ and $\tilde{L}$ are taken exactly as in Lemma\refthm{C71LSW}\!, and assume
\BVPb{Lw = \tilde{L} \tilde{w} = 1}{w = \tilde{w} = 0}{SimplerSit}
Then there exists a postive constant $C_0 = C_0(n, \bar{\lambda}, \bar{\Lambda})$ such that for all $x \in B_{1/4}$
we have
\begin{equation}
    C_0^{-1} w(x) \leq \tilde{w}(x) \leq C_0 w(x) \;.
\label{eq:GEFGW}    
\end{equation}
\end{lemma}
\begin{proof}
Without loss of generality we can assume that $\tilde{L}$ is the Laplacian, and we can also replace the assumption
$Lw = \Delta \tilde{w} = 1$ with the assumption $Lw = \Delta \tilde{w} = -1$ so that $w$ and $\tilde{w}$ are positive
functions.  In fact, $\tilde{w}(x) = \Theta(x)$ where we define
$$\Theta(x) := \frac{1 - |x|^2}{2n} \;.$$
It will be convenient to define the following positive universal constants:
\begin{equation}
     \theta_1 := \int_{B_1} |\nabla \Theta|^2 \ \ \ \ \ \ \ \text{and} \ \ \ \ \ \ \ \theta_2 := \int_{B_{1/2}} \Theta \;.
\label{eq:thetaconsts}
\end{equation}

Let $u$ solve
\BVPbn{Lu = -\chisub{\{B_{1/2}\}}}{u = 0}{uLSWdef} and
let $v$ solve
\BVPb{Lv = -1 + \chisub{\{B_{1/2}\}}}{v = 0}{vLSWdef}
By the strong maximum principle, both $u$ and $v$ are positive in $B_1,$ and since $w = u + v$ in $B_1,$ we
have $w > u$ in $B_1.$  By Theorem 8.18 of \cite{GT}
\begin{equation}
    \left(\frac{1}{4}\right)^{-n} ||u||_{L^1(B_{1/2})} \leq C \inf_{B_{1/4}} u \;.
\label{eq:HalfHappy}
\end{equation}
By basic facts from the Calculus of Variations, $u$ is characterized as the unique minimizer of the functional:
\begin{equation}
   J(\phi ; r) := \int_{B_1} \nabla \phi A(x) \nabla \phi - 2 \int_{B_{r}} \phi \;,
\label{eq:uJrdef}
\end{equation}
when $r$ is taken to be $1/2.$
(We are letting $A(x)$ be the matrix of coefficients for the operator $L.$)
Now we observe that for any $t > 0,$ we have
\begin{alignat*}{1}
     J(t \Theta ; 1/2) &= t^2 \int_{B_1} \nabla \Theta A(x) \nabla \Theta - 2t \int_{B_{1/2}} \Theta \\
                       &\leq t^2 \Lambda \theta_1 - 2t \theta_2 \;.
\end{alignat*}
(Recall that $\theta_1$ and $\theta_2$ are the positive universal constants defined in
Equation\refeqn{thetaconsts}above.)  Now by taking $$t := \frac{\theta_2}{\Lambda \theta_1}$$
we can conclude
\begin{equation}
    J(u ; 1/2) \leq J(t \Theta ; 1/2) \leq -\frac{\theta_2}{2 \Lambda \theta_1} =: -C_1 < 0 \;.
\label{eq:C1def}
\end{equation}
Now since $$J(u ; 1/2) \geq -2 \int_{B_1/2} u = -2 ||u||_{L^1(B_{1/2})} \;,$$
we can conclude that $$||u||_{L^1(B_{1/2})} \geq C_1/2 \;,$$
which can be combined with Equation\refeqn{HalfHappy}to get
\begin{equation}
    \inf_{B_{1/4}} w \geq \inf_{B_{1/4}} u \geq C
\label{eq:HalfDone}
\end{equation}
which is half of what we need.

On the other hand, by Theorem 8.17 of \cite{GT} we know
\begin{equation}
    \sup_{B_{1/2}} w \leq C(||w||_{L^2(B_1)} + 1) \;.
\label{eq:OthHalfHap}
\end{equation}
Using the fact that $w$ is the unique minimizer of $J( \cdot ; 1 )$ and reasoning in a fashion almost identical to
what we did above we get:
\begin{alignat*}{1}
    0 &\geq J( w ; 1 ) \\
       &\geq \lambda \int_{B_1} | \nabla w |^2 - 2 \int_{B_1} w \\
       &=\lambda ||\nabla w||^2_{L^2(B_1)} - 2 ||w||_{L^1(B_1)} \\
       &\geq C \lambda ||w||^2_{L^2(B_1)} - 2 ||w||_{L^1(B_1)} \ \ \ \ \ \ \ \ \ \text{by Poincar\'{e}'s inequality} \\
       &\geq C \lambda ||w||^2_{L^2(B_1)} - 2(||w||_{L^2(B_1)} + |B_1|)
\end{alignat*}
which forces $||w||_{L^2(B_1)} \leq C_0$ for some universal $C_0.$  Combining this equation with
Equation\refeqn{OthHalfHap}gives us what we need.
\end{proof}

%%%%%%%%%%%%%%%%%%%%%%%%%%%%%%%%%%%%%%%%%%%%%%%%%%%%
\begin{lemma} \label{growth}
Let $W$ satisfy the following
\begin{equation}
    \bar{\lambda} \leq L(W) \leq \bar{\Lambda} \ \ \text{in} \ B_r \ \ \ \ \text{and} \ W \geq 0 \;,
\label{eq:grlemma1}
\end{equation}
then there exists a positive constant, $C,$ such that
\begin{equation} \sup_{\partial B_r} W \geq W(0) + Cr^2 \;.
\end{equation}
\end{lemma}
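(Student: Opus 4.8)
The plan is to compare $W$ with an explicit comparison function adapted to the operator, thereby reducing the statement to one uniform quantitative estimate. First I would rescale to the case $r=1$: setting $\tilde W(y):=r^{-2}W(ry)$ for $y\in B_1$, one checks directly from the weak formulation that $\tilde W\ge 0$ and $\tilde L\tilde W\ge\bar\lambda$ in $B_1$, where $\tilde L:=D_j\,\tilde a^{ij}\,D_i$ with $\tilde a^{ij}(y):=a^{ij}(ry)$. The essential point is that $\tilde L$ has the \emph{same} ellipticity constants $\lambda,\Lambda$ as $L$, so every constant produced below depends only on $n,\lambda,\Lambda,\bar\lambda$ and not on $r$ or on the particular coefficients. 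Since $\sup_{\partial B_1}\tilde W=r^{-2}\sup_{\partial B_r}W$ and $\tilde W(0)=r^{-2}W(0)$, the desired inequality is equivalent to $\sup_{\partial B_1}\tilde W\ge\tilde W(0)+C$.

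Next I would introduce the comparison function. Let $\psi\in W^{1,2}_0(B_1)$ be the unique weak solution of $\tilde L\psi=\bar\lambda$ in $B_1$ (Theorem 8.3 of \cite{GT}); since $\tilde L\psi\ge 0$, the weak maximum principle gives $\psi\le 0$, and I set $\varphi:=-\psi\ge 0$, so that $\tilde L\varphi=-\bar\lambda$ in $B_1$ and $\varphi=0$ on $\partial B_1$. Because $\tilde W\ge 0$ and $\tilde L(\tilde W-\psi)=\tilde L\tilde W-\bar\lambda\ge 0$, the function $\tilde W-\psi$ is a subsolution in $B_1$, and the weak maximum principle yields
\[
   \tilde W(0)+\varphi(0)\;=\;\tilde W(0)-\psi(0)\;\le\;\sup_{\partial B_1}\bigl(\tilde W-\psi\bigr)\;=\;\sup_{\partial B_1}\tilde W ,
\]
since $\psi$ vanishes on $\partial B_1$. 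Thus the whole lemma reduces to the claim that the continuous function $\varphi$ satisfies $\varphi(0)\ge C$ for some $C=C(n,\lambda,\Lambda,\bar\lambda)>0$ — that is, the ``$\tilde L$-torsion function'' of $B_1$ is bounded below at the center, uniformly over the ellipticity class.

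To prove that claim I would combine an energy lower bound with the weak Harnack inequality. Testing $\tilde L\varphi=-\bar\lambda$ against a fixed nonnegative cutoff supported in $B_{1/5}$ and equal to $1$ on $B_{1/6}$, and using ellipticity together with Cauchy--Schwarz, gives $\int_{B_{1/5}}|D\varphi|^2\ge c_1>0$. A standard Caccioppoli estimate for the nonnegative supersolution $\varphi$ (with the bounded right-hand side $-\bar\lambda$) gives $\int_{B_{1/5}}|D\varphi|^2\le C\bigl(\int_{B_{2/5}}\varphi^2+\int_{B_{2/5}}\varphi\bigr)$, while the global maximum estimate (Theorem 8.16 of \cite{GT}, applicable since $\varphi$ has zero boundary data and bounded right-hand side) gives $\|\varphi\|_{L^\infty(B_1)}\le C\bar\lambda$, whence $\varphi^2\le C\bar\lambda\,\varphi$. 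Combining these three facts produces $\int_{B_{2/5}}\varphi\ge c_2>0$. Finally, the weak Harnack inequality (see, e.g., Theorem 8.18 of \cite{GT}) applied to the nonnegative supersolution $\varphi$ on the ball $B_{4/5}\subset\subset B_1$ gives $\varphi(0)\ge\inf_{B_{1/5}}\varphi\ge c\,|B_{2/5}|^{-1}\int_{B_{2/5}}\varphi\ge c_3>0$, which is the claim; undoing the rescaling then yields $\sup_{\partial B_r}W\ge W(0)+Cr^2$. (Alternatively, one may invoke the two-sided Green's function bounds of \cite{LSW}: writing $\varphi(x)=\bar\lambda\int_{B_1}G_{B_1}(x,y)\,dy$ for the Dirichlet Green's function of $\tilde L$ on $B_1$, and bounding $G_{B_1}(0,y)\ge c_1|y|^{2-n}-c_2$ by subtracting an $\tilde L$-harmonic corrector controlled by the maximum principle, one obtains $\varphi(0)\ge\bar\lambda\int_{B_\rho}\tfrac{c_1}{2}|y|^{2-n}\,dy>0$ for a suitable $\rho=\rho(n,\lambda,\Lambda)$; for $n=2$ one uses the corresponding logarithmic estimate.)

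The main obstacle is precisely this last estimate $\varphi(0)\ge C$ with $C$ depending only on the structural constants: unlike the nondivergence or constant-coefficient case, there is no explicit barrier, so the lower bound must be extracted from the equation itself. The maximum-principle comparison and the energy computations are routine, but some care is needed with the radii, since the weak Harnack inequality requires a ball compactly contained in $B_1$ and since Poincar\'e's inequality alone cannot convert a lower bound on $\int|D\varphi|^2$ into one on $\int\varphi$ (the inequality points the wrong way); working at radius $1/5$ and establishing the integral lower bound on the intermediate ball $B_{2/5}$ via the localized Caccioppoli estimate is what circumvents this.
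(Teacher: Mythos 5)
Your proposal is correct, and its skeleton coincides with the paper's: both arguments reduce the lemma, via the weak maximum principle, to a quantitative lower bound at the center for the zero-boundary-data solution of an equation with right-hand side $\bar{\lambda}$ (your $\varphi=-\psi$ plays the role of $-v$ in the paper, where $v$ solves $Lv=LW$ with $v=0$ on $\partial B_r$; writing $W-\psi$ as a subsolution is equivalent to the paper's decomposition $W=u+v$). The genuine difference is how that key bound is obtained. The paper simply cites the comparison theorem of \cite{LSW} against the explicit Laplacian torsion function $v_0(x)=\frac{|x|^2-r^2}{2n}$, getting $-v(0)\geq C r^2$ in one line; you instead rescale to $r=1$ and prove the bound $\varphi(0)\geq C(n,\lambda,\Lambda,\bar{\lambda})$ from scratch, by testing the equation to get a localized energy lower bound, converting it to $\int_{B_{2/5}}\varphi\geq c$ via Caccioppoli and the global $L^\infty$ estimate (Theorem 8.16 of \cite{GT}), and finishing with the weak Harnack inequality (Theorem 8.18 of \cite{GT}); your parenthetical Green's-function argument is essentially the paper's route. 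What your version buys is self-containedness and explicit uniformity: the preliminary rescaling makes transparent that the constant depends only on $n,\lambda,\Lambda,\bar{\lambda}$ and not on $r$ or the particular coefficients, a point the paper leaves implicit in the \cite{LSW} citation, and your radius bookkeeping (energy bound on $B_{1/5}$, integral bound on $B_{2/5}$, Harnack on $B_{4/5}\subset\subset B_1$) is sound. What the paper's route buys is brevity. All the individual steps you outline (the rescaled weak formulation, the sign of $\psi$, the Caccioppoli estimate with bounded right-hand side, and the applicability of the weak Harnack inequality to the nonnegative supersolution $\varphi$ with $p=1$) check out, so I see no gap.
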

\begin{proof} Let $u$ solve
\begin{equation}
    L(u) = 0 \ \ \text{in} \ B_r \ \ \ \ \text{and} \ u=W \ \ \text{on}\ \ \partial B_r \;.
\end{equation}
Then the Weak Maximum Principle gives:
\begin{equation}
\sup_{\partial B_r} u\geq u(0).
\label{eq:wmpu}
\end{equation}
Let $v$ solve
\begin{equation}
    L(v) = L(W) \ \ \text{in} \ B_r \ \ \ \ \text{and} \ v=0 \ \ \text{on} \ \ \partial B_r \;.
\end{equation}
Notice that $v_0(x):=\frac{|x|^2-r^2}{2n}$ solves
\begin{equation}
    \Delta(v_0) = 1 \ \ \text{in} \ B_r \ \ \ \ \text{and} \ v_0=0 \ \ \text{on} \ \ \partial B_r \;.
\end{equation}
By Lemma\refthm{LSW71Var}above, there exist constants $C_1, C_2$, such that
$C_1v_0(x)\leq v(x) \leq C_2v_0(x)$ in $B_{r/4}$. In particular,
\begin{equation}
-v(0)\geq C_2\frac{r^2}{2n}.
\label{eq:lswv0}
\end{equation}
By the definitions of $u$ and $v,$ we know $W=u+v,$ therefore by Equations\refeqn{wmpu}and\refeqn{lswv0}we have
\begin{equation}
\sup_{\partial B_r} W(x) = \sup_{\partial B_r} u(x) \geq u(0) = W(0) - v(0) \geq W(0) + C_2\frac{r^2}{2n} \;.
\label{eq:NiceChain}
\end{equation}
\end{proof}
%%%%%%%%%%%%%%%%%%%%%%%%%%%%%%%%%%%%%%%%%%%%%%%%%%%%
\begin{lemma} \label{posball}
Take $w$ as above, and assume that $w(0)=\gamma>0.$  Then $w>0$ in a ball $B_{\delta_0}$
where $\delta_0=C_0\sqrt{\gamma}$
\end{lemma}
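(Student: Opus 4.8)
The plan is to use Lemma~\ref{growth} together with the optimal regularity bound from Theorem~\ref{OpReg} to force a contradiction if $w$ vanishes too close to $0$. The idea is that near a point where $w$ is positive, $w$ satisfies $L(w) = f \geq \bar\lambda$ (since we are in the interior of $\{w>0\}$, where $h = f$), so $w$ grows at least quadratically off of any free boundary point by Lemma~\ref{growth}; meanwhile Theorem~\ref{OpReg} says $w$ grows at most quadratically \emph{up to} any free boundary point. Balancing the constant $\gamma = w(0)$ against these two quadratic rates pins down a definite radius $\delta_0 \sim \sqrt\gamma$ inside which there can be no zero of $w$.

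Here is how I would organize it. First, suppose for contradiction that $w$ has a zero somewhere in $B_{\delta}(0)$ for some small $\delta$ to be chosen, and let $z$ be a point realizing (or nearly realizing) $\operatorname{dist}(0, \{w=0\})=:d \le \delta$. Then $z \in \partial\{w>0\}$ and $B_d(0) \subset \{w>0\}$, so on $B_d(0)$ we have $L(w) = f \ge \bar\lambda$ and $w > 0$. Apply Lemma~\ref{growth} with $W = w$ and $r = d$: this gives $\sup_{\partial B_d(0)} w \ge w(0) + C d^2 = \gamma + Cd^2 \ge \gamma$. So there is a point $y$ with $|y| = d$ and $w(y) \ge \gamma$. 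Now apply Theorem~\ref{OpReg} centered at the free boundary point $z$ (after the harmless observation that the hypotheses of the theorem are translation-invariant on balls contained in $B_1$, or equivalently rescale so that $z$ plays the role of the origin): since $|y - z| \le |y| + |z| \le 2d$, we get $w(y) \le 4\tilde C |y-z|^2 \le 16 \tilde C d^2$. Combining, $\gamma \le 16\tilde C d^2$, i.e. $d \ge \sqrt{\gamma/(16\tilde C)}$. Choosing $\delta_0 := \tfrac12\sqrt{\gamma/(16\tilde C)} = C_0\sqrt\gamma$ with $C_0 := 1/(8\sqrt{\tilde C})$, we conclude that $\{w=0\}$ cannot meet $B_{\delta_0}(0)$, which is the claim.

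One technical point to handle carefully — and the place where I expect the only real friction — is applying Theorem~\ref{OpReg} at the free boundary point $z$ rather than at the origin, since the theorem as stated is anchored at $0$ and only gives the bound on $B_{1/2}$. This is resolved by the same scaling/translation device already used in the proof of Theorem~\ref{OpReg}: translating so $z$ is the origin and rescaling from a ball $B_\rho(z) \subset B_1$ produces a new problem of exactly the same type (same ellipticity constants, same bounds on the right-hand side), and $\tilde C$ depends only on those constants; one just needs $z$ and $y$ to sit well inside $B_1$, which is automatic once $\gamma$ — hence $\delta_0$ — is small (and for large $\gamma$ the statement is vacuous or reduces to the small case by the same rescaling). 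A second minor point is that $\operatorname{dist}(0,\{w=0\})$ might not be attained if the zero set is not closed, but $\{w=0\}$ is closed by continuity of $w$ (from the De Giorgi–Nash–Moser estimates already invoked), so the infimum is a minimum; alternatively one argues with a point within $\tfrac32 d$ and absorbs the constant. Everything else is a direct chaining of the two quadratic estimates.
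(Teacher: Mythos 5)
Your argument is correct and rests on the same mechanism as the paper's one-line proof: apply the quadratic bound of Theorem~\ref{OpReg}, translated to a free boundary point $z$ at distance $d$ from the origin, to conclude $\gamma \leq C d^2$ and hence $d \geq C_0\sqrt{\gamma}$. The detour through Lemma~\ref{growth} to manufacture the point $y$ is superfluous, since the origin itself (with $w(0)=\gamma$ and $|0-z|=d$) already gives $\gamma \leq 4\tilde{C}d^2$; otherwise your write-up matches the paper's proof, and your remarks on translating/rescaling Theorem~\ref{OpReg} and on $\{w=0\}$ being closed only make explicit details the paper glosses over.
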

\begin{proof} By Theorem\refthm{OpReg}\!\!, we know that if $w(x_0)=0$, then
\begin{equation}
   \gamma = |w(x_0)-w(0)| \leq C|x_0|^2,
\label{eq:growthold}
\end{equation}
which implies $|x_0|\geq C\sqrt{\gamma}$.
\end{proof}
%%%%%%%%%%%%%%%%%%%%%%%%%%%%%%%%%%%%%%%%%%%%%%%%%%%%
\begin{lemma}[Nondegenerate Increase on a Polygonal Curve]
Let $w$ be exactly as above except that we assume that everything is satisfied in $B_2$ instead of $B_1.$
Suppose again that $w(0) = \gamma > 0,$ but now we may require $\gamma$ to be sufficiently small.  Then
there exists a positive constant, $C,$ such that
\begin{equation} \sup_{B_1} w(x) \geq C + \gamma.
\end{equation}
\end{lemma}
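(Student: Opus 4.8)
The plan is to build a polygonal curve $0 = x_0, x_1, x_2, \dots$ along which $w$ grows by a fixed geometric factor at every step, while the total length of the curve stays small; once $w$ has grown past a fixed universal threshold $\kappa$ along a curve contained in $B_1$, we are finished. The growth at each step comes from Lemma~\ref{growth}, applied on a small ball sitting inside the positivity set $\{w>0\}$ around the current vertex; the control on the step sizes — without which the argument collapses — comes from an interior Harnack estimate.

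I would first analyze one step. Suppose $w(x_k) = \gamma_k > 0$. Since the hypotheses hold on all of $B_2$ and (as will be arranged) $x_k \in B_{1/4}$, the localized versions of Lemmas~\ref{posball} and~\ref{growth} apply around $x_k$ with the same universal constants; in particular $w > 0$ on $B_{\delta_k}(x_k)$ with $\delta_k = C_0\sqrt{\gamma_k}$, and there $Lw = f$ with $\bar\lambda \le f \le \bar\Lambda$. Applying Lemma~\ref{growth} on $B_{\delta_k/4}(x_k)$ yields a point $x_{k+1} \in \partial B_{\delta_k/4}(x_k)$ with
\[
   \gamma_{k+1} := w(x_{k+1}) \ge \gamma_k + c_1\Big(\tfrac{\delta_k}{4}\Big)^2 = (1+\mu)\gamma_k, \qquad \mu := \tfrac{c_1 C_0^2}{16} > 0,
\]
where $c_1$ is the constant of Lemma~\ref{growth}. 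For the matching upper bound, decompose $w = p + q$ on $B_{\delta_k}(x_k)$ with $Lp = f$, $p = 0$ on $\partial B_{\delta_k}(x_k)$, and $Lq = 0$, $q = w$ on $\partial B_{\delta_k}(x_k)$. The maximum principle gives $p \le 0$, hence $q = w - p \ge w \ge 0$; rescaling to the unit ball and applying Theorem 8.16 of \cite{GT} gives $\|p\|_{L^\infty} \le C' \delta_k^2 = C' C_0^2 \gamma_k$; and the Harnack inequality (from the theorems of De Giorgi, Nash, and Moser) bounds $q$ on $B_{\delta_k/2}(x_k) \ni x_{k+1}$ by $C_H\, q(x_k) \le C_H(1 + C' C_0^2)\gamma_k$. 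Therefore $\gamma_{k+1} \le \Theta\gamma_k$ with $\Theta := C_H(1 + C' C_0^2)$ a universal constant, and the step length is $|x_{k+1} - x_k| = \tfrac{C_0}{4}\sqrt{\gamma_k}$.

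Now I would iterate, stopping at the first index $m$ with $\gamma_m \ge \kappa$, where $\kappa$ is a small universal constant fixed below. As long as $\gamma_k < \kappa$ the values $\gamma_0 < \gamma_1 < \dots < \gamma_k$ all lie below $\kappa$ and increase geometrically, so $\gamma_j \le \kappa(1+\mu)^{-(k-j)}$, and summing the step lengths,
\[
   |x_{k+1}| \le \sum_{j=0}^{k} \frac{C_0}{4}\sqrt{\gamma_j} \le \frac{C_0\sqrt{\kappa}}{4}\sum_{j\ge 0}(1+\mu)^{-j/2} =: B\sqrt{\kappa}.
\]
Fix $\kappa$ small enough that $B\sqrt{\kappa} < \tfrac14$ and also $C_0\sqrt{\kappa} < \tfrac14$. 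Then every vertex constructed so far lies in $B_{1/4}$, so $B_1(x_k) \subset B_2$ and the localized lemmas used above genuinely apply, and the construction continues. Since $\gamma_k \ge \gamma(1+\mu)^k \to \infty$, the stopping index $m$ is reached after finitely many steps, and at that point $x_m \in B_{1/4} \subset B_1$ with $w(x_m) = \gamma_m \ge \kappa$. Hence $\sup_{B_1} w \ge \kappa$. Taking $C := \kappa/2$ and using that $\gamma$ may be assumed smaller than $\kappa/2$ gives $\sup_{B_1} w \ge \kappa > C + \gamma$, as required.

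\emph{The main obstacle} is the a priori control of the step sizes: a naive iteration using only the lower bound $\gamma_{k+1} \ge (1+\mu)\gamma_k$ does not prevent the $\gamma_k$ — and hence the step lengths, which are proportional to $\sqrt{\gamma_k}$ — from growing so fast that the polygon leaves $B_1$ before $w$ attains a fixed size. The Harnack-type upper bound $\gamma_{k+1} \le \Theta\gamma_k$ is exactly what makes $\sum \sqrt{\gamma_k}$ a convergent geometric series comparable to $\sqrt{\kappa}$, tying the stopping rule $\gamma_m \ge \kappa$ to a curve of length $O(\sqrt{\kappa})$. A minor secondary point is that Lemmas~\ref{posball} and~\ref{growth}, stated at the origin, transfer to each vertex $x_k$ with the same universal constants; this is immediate because all the hypotheses hold on $B_2$ and each $x_k$ lies well inside $B_{1/4}$, leaving more than a unit ball of room around it.
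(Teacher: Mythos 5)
Your proposal is correct, and its skeleton --- iterating Lemma \ref{posball} to get a positivity ball of radius proportional to $\sqrt{w(x_k)}$ around the current vertex, then Lemma \ref{growth} on that ball to gain a fixed factor $(1+\mu)$ in the value --- is exactly the paper's polygonal construction. Where you genuinely differ is in the bookkeeping that keeps the polygon inside $B_1$ and ends the iteration. The paper first reduces to the case where $w$ has a zero $y \in B_{1/3}$ (otherwise the maximum principle together with Lemma \ref{growth} finishes at once), uses the quadratic bound of Theorem \ref{OpReg} through that zero to guarantee each step has length at most $2/3$, and runs the chain for an explicitly computed number of steps $N$, chosen as the first index at which the cumulative lower bounds $\sum C_0\sqrt{\gamma}(1+C_1)^{i/2}$ on the step lengths reach $1/3$; plugging that $N$ into the geometric growth gives $w(x_N) \geq C_2(1+\gamma)$ directly. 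You instead stop the first time the value exceeds a fixed small threshold $\kappa$ and observe that, below the cap, the values decay geometrically backwards from the stopping time, so the total length is bounded by a geometric series of order $\sqrt{\kappa}$, made smaller than $1/4$; this avoids both the dichotomy about a nearby zero and any per-step upper bound, and is arguably tidier, at the cost of an extra localization remark (which you do address) to run Lemmas \ref{posball} and \ref{growth} at vertices other than the origin. One correction to your commentary rather than your proof: the Harnack-type upper bound $\gamma_{k+1} \leq \Theta \gamma_k$, which your closing paragraph presents as the indispensable control, is never actually used. Your own length estimate invokes only the lower bound $\gamma_{j+1} \geq (1+\mu)\gamma_j$ together with $\gamma_j < \kappa$ for $j \leq m-1$: the step out of $x_j$ has length $\tfrac{C_0}{4}\sqrt{\gamma_j}$, and every such $j$ precedes the stopping index, so the backwards geometric decay from the cap already makes the series converge. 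Thus that paragraph misdiagnoses where the control comes from (the decomposition $w = p+q$ and the Harnack step are correct but superfluous), while the proof as written is complete.
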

\begin{proof}
We can assume without loss of generality that there exists a $y \in B_{1/3}$ such that $w(y) = 0.$  Otherwise
we can apply the maximum principle along with Lemma\refthm{growth}to get:
\begin{equation}
   \sup_{B_1} w(x) \geq \sup_{B_{1/3}} w(x) \geq \gamma + C,
\label{eq:easycase}
\end{equation}
and we would already be done. 

By Lemmas\refthm{growth}and\refthm{posball}\!\!, there exist $x_1 \in \partial B_{\delta_0}$, such that
\begin{equation} w(x_1)\geq w(0)+ C\frac{\delta_0^2}{2n} = (1+C_1)\gamma
\end{equation}
For this $x_1$ and $B_{\delta_1}(x_1)$ where $\delta_1=C_0\sqrt{w(x_1)}$, Lemma\refthm{posball}guarantees the
existence of an $x_2\in \partial B_{\delta_1}(x_1)$, such that
\begin{equation} w(x_2)\geq (1+C_1)w(x_1)\geq (1+C_1)^2\gamma
\end{equation}
Repeating the steps we can get finite sequences $\{x_i\}$ and $\{\delta_i\}$ with $x_0=0$ such that
\begin{equation} w(x_i) \geq (1+C_1)^i\gamma \ \ \text{and} \ \ \delta_i = |x_{i+1} - x_i| = C_0\sqrt{w(x_i)}.
\label{eq:recurrence}
\end{equation}
Observe that as long as $x_{i} \in B_{1/3},$ because of the existence of $y \in B_{1/3}$ where $w(y) = 0$ we know that
$\delta_i \leq 2/3,$ and so $x_{i+1}$ is still in $B_1.$
Pick $N$ to be the smallest number which satisfies the following inequality:
\begin{equation}
\Sigma_{i=0}^{N}\delta_i=\Sigma_{i=0}^{N}C_0\sqrt{\gamma}(1+C_1)^{\frac{i}{2}}\geq \frac{1}{3},
\end{equation}
that is
\begin{equation}
N \geq \frac{2\ln\left[\frac{(1+C_1)^{\frac{1}{2}}-1}{3C_0\sqrt{\gamma}}+1\right]}{\ln(1+C_1)}-1.
\end{equation}
Plugging this into Equation\refeqn{recurrence}gives
\begin{align*} w(x_N)\geq & \; \gamma(1+C_1)^{\frac{2\ln\left[\frac{(1+C_1)^{\frac{1}{2}}-1}{3C_0\sqrt{\gamma}}+1\right]}{\ln(1+C_1)}-1} \\
                        = & \; \frac{\gamma}{1+C_1}\left(\frac{(1+C_1)^{\frac{1}{2}}-1}{3C_0\sqrt{\gamma}}+1\right)^2 \\
                        = & \; (\tilde{C}_0 + \tilde{C}_1 \sqrt{\gamma})^2 \\
                        \geq & \; C_2(1 + \gamma) \;,
\end{align*}
where the last inequality is guaranteed by the fact that we allow $\gamma$ to be sufficiently small.
\end{proof}
%%%%%%%%%%%%%%%%%%%%%%%%%%%%%%%%%%%%%%%%%%%%%%%%%%%%

\begin{lemma} Take $w$ as above, but assume that 
$0\in \overline{\{w>0\}}$. Then
\begin{equation} \sup_{\partial B_1} w(x)\geq C.
\end{equation}
\end{lemma}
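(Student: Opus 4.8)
The plan is to reduce this statement to the preceding lemma (Nondegenerate Increase on a Polygonal Curve) by a single translation and dilation. I would begin with the elementary observation that $w$ is a subsolution: since $Lw=h\geq 0$, the weak maximum principle gives $\sup_{\partial B_1}w=\sup_{B_1}w$, so it is enough to exhibit a point of $B_1$ at which $w$ exceeds a universal constant. Let $\gamma_0>0$ denote the smallness threshold for $\gamma$ that is implicit in the previous lemma and $C_\star>0$ its constant; both depend only on $n,\lambda,\Lambda,\bar\lambda,\bar\Lambda$. If $w(0)\geq\gamma_0/16$ we are already done because $\sup_{B_1}w\geq w(0)$, so I would assume $w(0)<\gamma_0/16$.

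Since $0\in\overline{\{w>0\}}$ and $w$ is continuous, I can choose $z\in B_{1/12}$ with $0<w(z)=:\gamma<\gamma_0/16$ (taking $z=0$ if $w(0)>0$, and otherwise a nearby point where $w>0$, shrinking the ball to make $w(z)$ this small). Then I would set $\widehat{w}(x):=16\,w\!\left(z+\tfrac{x}{4}\right)$ for $x\in B_2$, which is legitimate since $z+\tfrac{x}{4}\in B_{7/12}\subset B_1$. A direct computation shows $\widehat{w}$ solves $\widehat{L}\,\widehat{w}=\widehat{h}$ in $B_2$, where $\widehat{L}=D_j\,\widehat{a}^{ij}D_i$ with $\widehat{a}^{ij}(x)=a^{ij}(z+\tfrac{x}{4})$ has the same ellipticity constants $\lambda,\Lambda$, and $\widehat{h}(x)=h(z+\tfrac{x}{4})$ satisfies Equation\refeqn{hprops}relative to $\widehat{f}(x):=f(z+\tfrac{x}{4})$, which still obeys $\bar\lambda\leq\widehat{f}\leq\bar\Lambda$. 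Since $\widehat{w}(0)=16\gamma<\gamma_0$, the function $\widehat{w}$ satisfies every hypothesis of the previous lemma, which therefore gives
\[
   \sup_{B_1}\widehat{w}\;\geq\;C_\star+16\gamma\;\geq\;C_\star,
   \qquad\text{that is,}\qquad
   \sup_{B_{1/4}(z)}w\;\geq\;\frac{C_\star}{16}.
\]
Because $\overline{B_{1/4}(z)}\subset\overline{B_{1/3}}\subset B_1$, this yields $\sup_{B_1}w\geq C_\star/16$, and combined with the subsolution bound it gives $\sup_{\partial B_1}w\geq C_\star/16$. Taking $C:=\min\{\gamma_0,\,C_\star\}/16$ then finishes the proof.

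I do not expect a deep obstacle here; the genuine work is already contained in the regularity, nondegeneracy, and polygonal-curve lemmas. The step that needs care is checking that the translate-and-dilate $\widehat{w}=16\,w(z+\cdot/4)$ preserves the full structure — the ellipticity constants, the bounds on $f$, and the three-line description Equation\refeqn{hprops}of the right-hand side — with no change of constants, so that the previous lemma applies with a threshold $\gamma_0$ and a constant $C_\star$ that are uniform in the choice of $z$; the remainder is the bookkeeping that keeps $B_{1/4}(z)$ inside $B_1$, which is the reason $z$ is taken so close to the origin.
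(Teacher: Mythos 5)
Your proposal is correct and follows the same route the paper intends: the paper's proof is exactly ``maximum principle plus the previous (polygonal curve) lemma,'' and you have simply made explicit the translation--dilation bookkeeping (choosing $z$ near $0$ with $0<w(z)$ small and passing to $16\,w(z+\cdot/4)$ on $B_2$) that the paper leaves as ``immediate.'' The scaling does preserve the ellipticity constants, the bounds on $f$, and the structure of $h$, so the application of the previous lemma and the final use of the weak maximum principle are both sound.
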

\begin{pf}
By applying the maximum principle and the previous lemma this lemma is immediate.
\end{pf}

%%%%%%%%%%%%%%%%%%%%%%%%%%%%%%%%%%%%%%%%%%%%%%%%%%%%
\begin{theorem}[Nondegeneracy] \label{NonDeg}
With $C = C(n,\lambda, \Lambda, \bar{\lambda}, \bar{\Lambda}) > 0$ exactly as in the previous lemma, and if
$0 \in \pclosure{ \{w > 0\} },$ then for any $r \leq 1$ we have
\begin{equation}
    \sup_{x \in B_r} w(x) \geq Cr^2 \;.
\label{eq:NonDeg}
\end{equation}
\end{theorem}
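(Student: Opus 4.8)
The plan is to deduce the scale-invariant bound\refeqn{NonDeg} from the previous lemma by a rescaling, exploiting the fact that the constant in that lemma depends only on $n,\lambda,\Lambda,\bar\lambda,\bar\Lambda$, all of which are unchanged by the scaling.

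Fix $r\le 1$ and set
\[
   \tilde w(x):=\frac{1}{r^2}\,w(rx),
\]
which is defined on $B_2$ since $r\le 1$ (so that $rx\in B_{2r}\subseteq B_2$). The first step is to check that $\tilde w$ again satisfies all the hypotheses under which the previous lemma was proved. Writing $\tilde a^{ij}(x):=a^{ij}(rx)$, $\tilde f(x):=f(rx)$, and $\tilde h(x):=h(rx)$, the coefficients $\tilde a^{ij}$ remain symmetric with the same ellipticity constants $\lambda,\Lambda$, one has $\bar\lambda\le\tilde f\le\bar\Lambda$, and a routine change of variables in the weak formulation\refeqn{Ldef2} shows that $\tilde L\tilde w=\tilde h$ in $B_2$, where $\tilde L:=D_j\,\tilde a^{ij}D_i$. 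Since the dilation $x\mapsto x/r$ carries $\{w>0\}$, $\{w=0\}$ and their interiors and boundaries onto the corresponding sets for $\tilde w$, the function $\tilde h$ satisfies the three conditions in\refeqn{hprops} relative to $\tilde w$ and $\tilde f$; and clearly $\tilde w\ge 0$ with $0\in\overline{\{\tilde w>0\}}$ because $0\in\overline{\{w>0\}}$.

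The second step is immediate. Applying the previous lemma to $\tilde w$ gives $\sup_{\partial B_1}\tilde w\ge C$ with $C=C(n,\lambda,\Lambda,\bar\lambda,\bar\Lambda)$, and this is the same constant as for $w$ precisely because the rescaling altered none of those parameters. Unwinding the definition of $\tilde w$ and using the continuity of $w$ (so that its supremum over $B_r$ is at least its supremum over $\partial B_r$),
\[
   \sup_{x\in B_r}w(x)\ \ge\ \sup_{x\in\partial B_r}w(x)\ =\ r^2\sup_{x\in\partial B_1}\tilde w(x)\ \ge\ Cr^2 .
\]
For $r=1$ one has $\tilde w=w$, so this already contains the previous lemma and no separate case is needed.

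I expect the only point requiring genuine care to be the first step: following the dilation closely enough to be sure that every structural hypothesis is preserved — in particular the slightly awkward inequality on $h$ along the free boundary in\refeqn{hprops} — and, most importantly, that the constant returned by the previous lemma does not secretly depend on $r$. There is no new analytic content, since the substantial work (the polygonal-curve iteration of the previous lemmas, built on Lemmas~\ref{growth} and~\ref{posball} together with the comparison against the explicit barrier from \cite{LSW}) has already been carried out.
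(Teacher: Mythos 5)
Your proposal is correct and is essentially the paper's own argument: the paper considers $u_{r_0}(x)=w(r_0x)/r_0^2$ and derives a contradiction with the preceding lemma, while you apply the same rescaling directly and then unwind it, which is logically the same reduction with the same use of scale invariance of $n,\lambda,\Lambda,\bar\lambda,\bar\Lambda$. The only blemish is the domain bookkeeping: $\tilde w$ is defined on $B_{1/r}$ (the claim ``$rx\in B_{2r}\subseteq B_2$'' is not the relevant containment, since what matters is $rx$ lying in the domain of $w$), but since what is actually invoked is the preceding lemma's conclusion on $\partial B_1$, this is the same level of precision as the paper's proof and does not affect the argument.
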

\begin{proof} Assume there exists some $r_0 \leq 1$, such that
\begin{equation} \sup_{x \in B_{r_0}} w(x) = C_1 r_0^2< C{r_0}^2 \;.
\end{equation}
Notice that for $\gamma \leq 1$,
\begin{equation} u_{\gamma}(x):= \frac{w(\gamma x)}{\gamma^2}
\end{equation}
is also a solution to the same type of problem with a new operator $\tilde{L}$ and new function $\tilde{h}$ defined in
$B_1,$ but the new operator has the same ellipticity as the old operator, and the new $\tilde{h}$ has the same bounds
and properties that $h$ had. Now in particular for $u_{r_0}(x)=\frac{w(r_0 x)}{{r_0}^2}$, we have for any $x\in B_1$
\begin{equation} u_{r_0}(x) = \frac{w(r_0 x)}{{r_0}^2} \leq \frac{1}{r_0^2}\sup_{x \in B_{r_0}} w(x) = C_1 < C \;,
\end{equation}
which contradicts the previous lemma.
\end{proof}

%%%%%%%%%%%%%%%%%%%%%%%%%%%%%%%%%%%%%%%%%%%%%%%%%%%%
\begin{corollary}[Free Boundary Has Zero Measure]  \label{FBHZM}
The Lebesgue measure of the set
$$\partial \{ w = 0 \}$$
is zero.
\end{corollary}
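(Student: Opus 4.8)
The plan is to show that at every free boundary point the set $\{w=0\}$ has a definite density gap, then invoke the Lebesgue density theorem. Concretely, fix $x_0 \in \partial\{w=0\} = \partial\{w>0\}$ (the two boundaries coincide since $\{w>0\}$ is open and $w$ is continuous). I want to produce a constant $\theta \in (0,1)$, independent of $x_0$, such that
\begin{equation}
   \frac{|\{w>0\} \cap B_r(x_0)|}{|B_r(x_0)|} \geq \theta
\label{eq:densitygap}
\end{equation}
for all sufficiently small $r$. If \eqref{eq:densitygap} holds, then no free boundary point can be a point of density $1$ for $\{w=0\}$; since by the Lebesgue density theorem almost every point of the measurable set $\{w=0\}$ is a density point of $\{w=0\}$, and every point of $\partial\{w=0\}$ fails this, we conclude $|\partial\{w=0\}| = 0$. (One should note $\partial\{w=0\} \subseteq \{w=0\}$ because $\{w>0\}$ is open.)

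The density estimate \eqref{eq:densitygap} is where the two main results of the section get combined. By Theorem \refthm{NonDeg} (nondegeneracy), applied after translating $x_0$ to the origin and rescaling so that everything holds in a fixed ball, there is a point $y_r \in B_r(x_0)$ with $w(y_r) \geq C r^2$. By Lemma \refthm{posball} (rescaled and recentered at $y_r$), since $w(y_r) = \gamma_r \geq Cr^2 > 0$, the function $w$ is strictly positive on the ball $B_{\delta}(y_r)$ with $\delta = C_0 \sqrt{\gamma_r} \geq C_0\sqrt{C}\, r =: c_1 r$. Shrinking $\delta$ if necessary we may assume $B_{c_1 r}(y_r) \subseteq B_{2r}(x_0)$, so that $\{w>0\}$ contains a ball of radius proportional to $r$ inside $B_{2r}(x_0)$; this gives \eqref{eq:densitygap} with $2r$ in place of $r$ and $\theta = (c_1/2)^n$, which is just as good for the density argument. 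A small bookkeeping point: the rescalings in Theorem \refthm{NonDeg} and Lemma \refthm{posball} produce operators $\tilde L$ and data $\tilde h$ with the \emph{same} ellipticity constants and the \emph{same} bounds, so the constants $C$, $C_0$ are uniform in $x_0$ and $r$, which is exactly what makes $\theta$ uniform.

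I expect the main obstacle to be purely a matter of careful scaling: making sure the ball $B_{c_1 r}(y_r)$ on which positivity is guaranteed genuinely sits inside a controlled multiple of $B_r(x_0)$, and checking that Lemma \refthm{posball} can be applied at $y_r$ with a radius bound that survives the rescaling — i.e. that the required smallness of $\gamma$ (needed in the "nondegenerate increase" lemma feeding Lemma \refthm{posball}) is met for small $r$, which it is since $\gamma_r \leq \|w\|_{L^\infty} \cdot$(scale factor) $\to$ a controlled quantity. None of this involves hard analysis; the real content has already been extracted into Theorem \refthm{NonDeg} and Lemma \refthm{posball}, and the corollary is the clean geometric-measure-theoretic payoff.
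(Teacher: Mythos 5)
Your proposal is correct and follows essentially the same route as the paper, which sketches exactly this argument (citing Lemma 5.1 of \cite{BT}): combine the nondegeneracy theorem with the quadratic regularity/positivity-ball lemma to place a ball of radius comparable to $r$ inside $\{w>0\}\cap B_{2r}(x_0)$ for every free boundary point $x_0$, so no such point is a Lebesgue density point of $\{w=0\}$ and the free boundary has measure zero. Your bookkeeping about uniformity of the constants under translation and rescaling is the right thing to check and matches the paper's remarks that the rescaled operator and data keep the same ellipticity and bounds.
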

\begin{pf}
The idea here is to use nondegeneracy together with regularity to show that contained in any ball centered on the free
boundary, there has to be a proportional subball where $w$ is strictly positive.  From this fact it follows that
the free boundary cannot have any Lebesgue points.  Since the argument is essentially identical to the proof
within Lemma 5.1 of \cite{BT} that $\mathcal{P}$ has measure zero, we will omit it.
\end{pf}

%%%%%%%%%%%%%%%%%%%%%%%%%%%%%%%%%%%%%%%%%%%%%%%%%%%%
\begin{remark}[Porosity]  \label{FBisP}
In fact, more can be said from the same argument.  Indeed, it shows that the free boundary is strongly porous
and therefore has a Hausdorff dimension strictly less than $n.$  (See \cite{M} for definitions of porosity and other
relevent theorems and references.)
\end{remark}

%%%%%%%%%%%%%%%%%%%%%%%%%%%%%%%%%%%%%%%%%%%%%%%%%%%%
\begin{corollary}[Removing the ``Gap'']  \label{RTG}
The existence, uniqueness, regularity, and nondegeneracy theorems from this section and the previous section all
hold whenever
$$L(w) = h$$
is replaced by
$$L(w) = \chisub{ \{ w > 0 \} }f \;.$$
\end{corollary}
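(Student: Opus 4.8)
The key observation underpinning Corollary \ref{RTG} is already assembled in Corollary \ref{FBHZM}: the free boundary $\partial\{w=0\}$ has Lebesgue measure zero. Granting this, the plan is to show that for \emph{any} pair $(w,h)$ produced by the machinery of Sections \ref{sec-PDEgap} and \ref{sec-OpRegNonDeg} — and in particular for the $w$ whose existence is asserted in Theorem \ref{ExistPWG} — we actually have the identity $h = \chisub{\{w>0\}}f$ as elements of $L^\infty(B_1)$, not merely away from the free boundary. First I would recall from \refeqn{hprops} that $h=0$ a.e.\ on $\{w=0\}^{\mathrm o}$, that $h=f$ on $\{w>0\}^{\mathrm o}=\{w>0\}$ (the latter set is open because $w$ is continuous by the De Giorgi--Nash--Moser estimates already invoked), and that $0\le h\le\bar\Lambda$ everywhere. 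The only set on which $h$ and $\chisub{\{w>0\}}f$ could disagree is $\partial\{w=0\}$, which by Corollary \ref{FBHZM} is a null set. Hence the two right-hand sides agree almost everywhere, and so $Lw=h$ and $Lw=\chisub{\{w>0\}}f$ are the \emph{same} weak equation in the sense of \refeqn{Ldef2}: for every $\phi\in W^{1,2}_0(B_1)$, $-\int a^{ij}D_iw\,D_j\phi=\int h\phi=\int\chisub{\{w>0\}}f\,\phi$.

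Once the two equations are literally identical, every result of these two sections transfers verbatim. Concretely, I would go through the list: Theorem \ref{ExistPWG} (existence) gives a $w\ge0$ solving \refeqn{PDEverOPh}, which is now \refeqn{PDEverOP}; the uniqueness assertion to be transferred is the one that will be established in Section \ref{sec-PDEgap}'s variational formulation in Section four, but at the level of this corollary we only need that any two solutions of $Lw=\chisub{\{w>0\}}f$ with the same boundary data coincide, which again follows because the equation is the same one for which uniqueness was shown; Lemma \ref{parabd}, Theorem \ref{OpReg}, Lemma \ref{growth}, Lemma \ref{posball}, the two polygonal-curve lemmas, and Theorem \ref{NonDeg} were all proved using only the properties \refeqn{hprops} of $h$ together with its a.e.\ coincidence with $\chisub{\{w>0\}}f$, so each stands unchanged with $h$ literally replaced by $\chisub{\{w>0\}}f$. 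I would phrase the writeup as: ``By Corollary \ref{FBHZM}, $h=\chisub{\{w>0\}}f$ a.e.\ in $B_1$; since the weak formulation \refeqn{Ldef2} only sees the right-hand side through its $L^2$ (indeed $L^\infty$) equivalence class, the statements `$Lw=h$' and `$Lw=\chisub{\{w>0\}}f$' are equivalent, and the assertion follows.''

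The one point requiring a word of care — and the only place where anything could go wrong — is making sure there is no circularity: the proof of Corollary \ref{FBHZM} (via the argument of Lemma 5.1 of \cite{BT}) uses Theorem \ref{OpReg} and Theorem \ref{NonDeg}, and those in turn were proved for the ``gappy'' equation $Lw=h$, \emph{not} for $Lw=\chisub{\{w>0\}}f$. So the logical order is: prove optimal regularity and nondegeneracy for $(w,h)$; deduce the free boundary is null; deduce $h=\chisub{\{w>0\}}f$ a.e.; and only \emph{then} observe that, retroactively, all those theorems were really theorems about $Lw=\chisub{\{w>0\}}f$ all along. I expect the main (modest) obstacle to be simply stating this dependency cleanly so that the reader sees the corollary is a bookkeeping statement rather than a new argument; there is no new analytic content, only the remark that $L^\infty$ functions agreeing off a null set define the same distribution, hence the same weak solution class.
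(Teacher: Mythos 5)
Your argument is correct and is exactly the one the paper intends: the corollary is stated immediately after Corollary \ref{FBHZM} precisely because, once $\partial\{w=0\}$ is a Lebesgue-null set, $h$ and $\chisub{\{w>0\}}f$ agree a.e.\ and hence define the same weak equation, so all the preceding results transfer. Your remark on the logical ordering (regularity and nondegeneracy proved first for the ``gappy'' pair $(w,h)$, then the null-set conclusion, then the retroactive identification) is the right way to see there is no circularity, and matches the paper's implicit reasoning.
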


%%%%%%%%%%%%%%%%%%%%%%%%%%%%%%%%%%%%%%%%%%%%%%%%%%%%%
%%%%%%%%%%%%%%%%%%%%%%%%%%%%%%%%%%%%%%%%%%%%%%%%%%%%%
%%%%%%%%%%%%%%%%%%%%%%%%%%%%%%%%%%%%%%%%%%%%%%%%%%%%%

\newsec{Equivalence of the Obstacle Problems}{EEOP}

There are two main points to this section.  First, we deal with the comparatively simple task of getting existence,
uniqueness, and continuity of certain minimizers to our functionals in the relevent sets.  Second, and more importantly
we show that the minimizer is the solution of an obstacle problem of the type studied in the previous
two sections.  We start with some definitions and terminology.

We continue to assume that $a^{ij}$ is strictly and uniformly elliptic and we keep $L$ defined exactly as above.
We let $G(x,y)$ denote the Green's function for $L$ for all of $\R^n$ and observe that the existence of
$G$ is guaranteed by the work of Littman, Stampacchia, and Weinberger.  (See \cite{LSW}.)

Let
\begin{alignat*}{1}
   C_{sm,r} &:= \min_{x \in \partial B_r} G(x,0) \\
   C_{big,r} &:= \max_{x \in \partial B_r} G(x,0) \\
   G_{sm,r}(x) &:= \min \{ G(x,0), C_{sm,r} \}
\end{alignat*}
and observe that $G_{sm,r} \in W^{1,2}(B_M)$ by results from \cite{LSW} combined with the Cacciopoli Energy Estimate.
We also know that there is an $\alpha \in (0,1)$ such that $G_{sm,r} \in C^{0,\alpha}(\dclosure{B_M})$ by the
De Giorgi-Nash-Moser theorem.  (See \cite{GT} or \cite{HL} for example.)
For $M$ large enough to guarantee that $G_{sm}(x) := G_{sm,1}(x) \equiv G(x,0)$ on $\partial B_M,$ we define:
$$H_{M,G} := \{ w \in W^{1,2}(B_M) \; : \; w - G_{sm} \in W_{0}^{1,2}(B_M) \; \} $$ and
$$K_{M, G} := \{ \ w \in H_{M,G} \; : \; w(x) \leq G(x,0) \ \text{for all} \ x \in B_M \; \}. $$
(The existence of such an $M$ follows from \cite{LSW}, and henceforth any constant $M$ will be large enough so that
$G_{sm,1}(x) \equiv G(x,0)$ on $\partial B_M.$)\\

Define:
$$\Phi_\epsilon(t) := \left\{ \begin{array}{rl}
                                                   0 \ \ \ & \text{for} \ t \geq 0 \\
                                                   \ \\
                                                   -\epsilon^{-1}t \ \ \ & \text{for} \ t \leq 0 \;,
                                             \end{array} \right. $$
$$J(w, \Omega) := \int_{\Omega} (a^{ij}D_i wD_j w- 2R^{-n}w) \;, \ \ \text{and}$$
$$J_{\epsilon}(w, \Omega) := \int_{\Omega} (a^{ij}D_i wD_j w- 2R^{-n}w+2\Phi_\epsilon(G-w)) \;.$$

%%%%%%%%%%%%%%%%%%%%%%%%%%%%%%%%%%%%%%%%%%%%%%%%%%%%

\begin{theorem} [Existence and Uniqueness]  \label{exun}
\begin{alignat*}{1}
   \text{Let} \ \ell_0 &:= \inf_{w \in K_{M,G}} J(w,B_M) \; \ \ \text{and} \\
   \text{let} \ \ell_{\epsilon} &:= \inf_{w \in H_{M,G}} J_{\epsilon}(w,B_M) \;.
\end{alignat*}
Then there exists a unique $w_0 \in K_{M,G}$ such that $J(w_0,B_M) = \ell_0,$
and there exists a unique $w_{\epsilon} \in H_{M,G}$ such that $J_{\epsilon}(w_{\epsilon},B_M) = \ell_{\epsilon} \;.$
\end{theorem}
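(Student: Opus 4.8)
The plan is to treat the two minimization problems by the classical direct method of the calculus of variations, exploiting that in both cases the functional is coercive and (after accounting for the convex constraint or the convex penalization term) essentially convex, so that existence follows from weak lower semicontinuity and uniqueness from strict convexity in the gradient. I would first verify that the admissible classes are nonempty — $G_{sm} \in K_{M,G}$ since $G_{sm} \le G(\cdot,0)$ pointwise, and $G_{sm}\in H_{M,G}$ trivially — and that $K_{M,G}$ is a closed convex subset of the affine space $H_{M,G}$, closedness following from the fact that $W^{1,2}$-weak convergence implies (a subsequence converging) pointwise a.e., which preserves the inequality $w \le G(\cdot,0)$.

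Next I would establish coercivity: for $w \in H_{M,G}$ write $w = G_{sm} + \phi$ with $\phi \in W^{1,2}_0(B_M)$, and use uniform ellipticity $a^{ij}D_iwD_jw \ge \lambda |\nabla w|^2$ together with the Cauchy–Schwarz and Poincaré inequalities to absorb the linear term $-2R^{-n}\int w$; since the penalization $\Phi_\epsilon(G-w) \ge 0$, the same bound shows $J_\epsilon$ is coercive on $H_{M,G}$ and $J$ is coercive on $K_{M,G}$. Hence along a minimizing sequence the $W^{1,2}(B_M)$ norms are bounded, and I can extract a subsequence converging weakly in $W^{1,2}(B_M)$, strongly in $L^2(B_M)$, and pointwise a.e.; the weak limit lies in $H_{M,G}$ (an affine closed subspace) and in $K_{M,G}$ when the sequence does. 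Lower semicontinuity of $\int a^{ij}D_iwD_jw$ under weak $W^{1,2}$ convergence is the standard fact that a nonnegative quadratic form in the gradient is weakly lower semicontinuous; the linear term $-2R^{-n}\int w$ is weakly continuous; and for $J_\epsilon$ the term $\int \Phi_\epsilon(G-w)$ is lower semicontinuous because $\Phi_\epsilon$ is continuous, nonnegative, and convex, so $w \mapsto \int \Phi_\epsilon(G-w)$ is convex and strongly $L^2$-continuous, hence weakly l.s.c. Therefore the weak limit attains the infimum, giving existence of $w_0$ and $w_\epsilon$.

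For uniqueness I would argue by strict convexity. If $w_1, w_2$ were two minimizers, consider $w_m = \tfrac12(w_1+w_2)$, which is again admissible ($K_{M,G}$ is convex, $H_{M,G}$ is affine). By the parallelogram-type identity for the quadratic form,
\[
  \int a^{ij}D_iw_mD_jw_m = \tfrac12\int a^{ij}D_iw_1D_jw_1 + \tfrac12\int a^{ij}D_iw_2D_jw_2 - \tfrac14\int a^{ij}D_i(w_1-w_2)D_j(w_1-w_2),
\]
while the linear term and (for $J_\epsilon$) the convex penalization satisfy the midpoint convexity inequality in the right direction. Combining these, $J(w_m) \le \ell_0 - \tfrac14\lambda\int|\nabla(w_1-w_2)|^2$ (and similarly for $J_\epsilon$), so minimality forces $\nabla(w_1-w_2) \equiv 0$; since $w_1 - w_2 \in W^{1,2}_0(B_M)$, this gives $w_1 = w_2$.

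The main obstacle is not any single deep estimate but rather the care needed in the closedness/lower-semicontinuity step: one must check that $G(\cdot,0)$ is a legitimate (locally integrable, in fact continuous away from the pole) function against which the pointwise constraint makes sense, that $G_{sm}$ genuinely sits in $W^{1,2}(B_M)$ with the stated boundary agreement — both of which are quoted from \cite{LSW} and the Cacciopoli estimate in the preceding text — and that the penalization term $\int \Phi_\epsilon(G-w)$ is finite on the admissible class, which again uses integrability of $G$ near its singularity on the bounded domain $B_M$. Once these integrability facts are in hand, the argument is the routine direct method sketched above.
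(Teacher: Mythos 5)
Your proposal is correct and takes essentially the same approach as the paper: the paper's own proof is just the one-line assertion that both results follow from a straightforward application of the direct method of the calculus of variations. Your write-up simply supplies the standard details of that method (nonemptiness and weak closedness of $H_{M,G}$ and $K_{M,G}$, coercivity via ellipticity and Poincar\'e, weak lower semicontinuity including the convex Lipschitz penalization $\Phi_\epsilon$, and strict convexity in the gradient plus $w_1-w_2\in W^{1,2}_0(B_M)$ for uniqueness), all of which are accurate.
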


\begin{pf} Both of these results follow by a straightforward application of the direct method of the Calculus of Variations.
\end{pf}
%%%%%%%%%%%%%%%%%%%%%%%%%%%%%%%%%%%%%%%%%%%%%%%%%%%%
\begin{remark}   \label{RnNoGood}
Notice that we cannot simply minimize either of our functionals on all of $\R^n$ instead of $B_M$ as the Green's function is
not integrable at infinity.  Indeed, if we replace $B_M$ with $\R^n$ then
$$\ell_0 = \ell_{\epsilon} = - \infty$$
and so there are many technical problems.
\end{remark}

%%%%%%%%%%%%%%%%%%%%%%%%%%%%%%%%%%%%%%%%%%%%%%%%%%%%

\begin{theorem}[Continuity] \label{contofmin} For any $\epsilon > 0,$ the function $w_{\epsilon}$ is continuous on $\dclosure{B_M}.$
\end{theorem}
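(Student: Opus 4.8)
The plan is to identify $w_\epsilon$, for each fixed $\epsilon>0$, as a weak solution in $B_M$ of a linear divergence-form equation $L w_\epsilon=\psi_\epsilon$ whose right-hand side $\psi_\epsilon$ lies in $L^\infty(B_M)$, and then to quote the interior and boundary H\"older estimates of De Giorgi--Nash--Moser type for such equations. Since $w_\epsilon$ agrees with the (globally continuous) function $G_{sm}$ on $\partial B_M$ in the $W^{1,2}$ sense, and $\partial B_M$ is a sphere, these estimates will give continuity of $w_\epsilon$ on all of $\dclosure{B_M}$.

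First I would write down the Euler--Lagrange relation for the minimizer. The functional $J_\epsilon(\,\cdot\,,B_M)$ is strictly convex on the affine class $H_{M,G}$: the Dirichlet term is strictly convex by uniform ellipticity, $-2R^{-n}w$ is affine, and $w\mapsto\Phi_\epsilon(G-w)$ is convex because $\Phi_\epsilon$ is convex and $w\mapsto G(\cdot,0)-w$ is affine. Hence the minimizer $w_\epsilon$ from Theorem~\ref{exun} is characterized by a first-order condition obtained from the perturbations $w_\epsilon+t\phi$, $\phi\in W_0^{1,2}(B_M)$. The only nonsmooth ingredient is $\Phi_\epsilon$, which has a corner at the origin but is convex and Lipschitz with $\partial\Phi_\epsilon\subset[-\epsilon^{-1},0]$. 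The cleanest way past this is to replace $\Phi_\epsilon$ by smooth convex approximants $\Phi_\epsilon^{\delta}$ with uniformly bounded derivatives, solve the (now classical) semilinear problems, and pass to the limit using uniform $W^{1,2}$ and interior H\"older bounds together with a weak-$*$ limit in $L^\infty$ of the bounded terms $(\Phi_\epsilon^{\delta})'(G-w_\epsilon^{\delta})$. The outcome is that there is a measurable $\sigma_\epsilon(x)\in\partial\Phi_\epsilon\big(G(x,0)-w_\epsilon(x)\big)$, so in particular $|\sigma_\epsilon|\le\epsilon^{-1}$ a.e., for which
\[
   -\int_{B_M}a^{ij}D_iw_\epsilon D_j\phi=\int_{B_M}\big(R^{-n}+\sigma_\epsilon\big)\phi
   \qquad\text{for all }\phi\in W_0^{1,2}(B_M).
\]
Thus $Lw_\epsilon=\psi_\epsilon$ weakly with $\psi_\epsilon:=-R^{-n}-\sigma_\epsilon\in L^\infty(B_M)$ and $\|\psi_\epsilon\|_{L^\infty(B_M)}\le R^{-n}+\epsilon^{-1}$.

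With this in hand the rest is standard linear theory. Since $\psi_\epsilon\in L^\infty(B_M)\subset L^q(B_M)$ for every $q$, the interior local-boundedness and H\"older estimates of De Giorgi--Nash--Moser (see, e.g., Theorems 8.22 and 8.24 of \cite{GT}, or \cite{HL}) show that $w_\epsilon$ is locally H\"older continuous in $B_M$, hence continuous on every $\closure{B_r}$ with $r<M$. For the boundary, observe that $G_{sm}=G_{sm,1}=\min\{G(\cdot,0),C_{sm,1}\}$ is continuous on all of $\dclosure{B_M}$: away from the origin it coincides with the continuous function $G(\cdot,0)$, while near the origin $G(\cdot,0)$ exceeds $C_{sm,1}$ so $G_{sm}$ equals the constant $C_{sm,1}$ there (this continuity, indeed H\"older continuity, of $G_{sm}$ was already recorded in the discussion preceding Theorem~\ref{exun}). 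Since $w_\epsilon-G_{sm}\in W_0^{1,2}(B_M)$ and $\partial B_M$, being a sphere, trivially satisfies a uniform exterior cone condition, the global H\"older estimate up to the boundary for divergence-form equations with $L^q$ data and continuous boundary values (e.g., Theorems 8.27 and 8.29 of \cite{GT}) yields $w_\epsilon\in C^{0,\gamma}(\dclosure{B_M})$, which is the assertion.

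The one genuinely delicate point is the first step: making the Euler--Lagrange relation rigorous despite the corner of $\Phi_\epsilon$, i.e. producing an honest bounded forcing term $\psi_\epsilon$; everything afterward is a direct citation of the standard interior and boundary regularity theory for divergence-form elliptic operators with bounded measurable coefficients. An alternative that avoids writing the equation altogether is to note that $w_\epsilon$ is a quasi-minimizer of the Dirichlet integral---the lower-order contributions are Lipschitz in $w$ and their $w$-dependence can be absorbed by Poincar\'e's inequality---and to quote the H\"older continuity of quasi-minimizers; but the route through \cite{GT} is closer to the tools already used in this paper.
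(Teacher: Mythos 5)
Your argument is correct, but it follows a different route from the paper. The paper disposes of this theorem by a single citation to Chapter 7 of \cite{G}: the functional $J_\epsilon$ differs from the Dirichlet integral $\int a^{ij}D_iwD_jw$ only by terms that are Lipschitz in $w$ (note $2\Phi_\epsilon(G-w)=2\epsilon^{-1}(w-G)^+$), so $w_\epsilon$ is a quasi-minimizer of the Dirichlet energy and Giusti's theory of quasi-minima gives \Holder continuity in the interior and up to the boundary (balls satisfy the required boundary density condition, and the boundary datum $G_{sm}$ is \Holder), with no need to write any Euler--Lagrange equation. This is exactly the ``alternative'' you sketch in your closing remark. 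Your primary route instead extracts the Euler--Lagrange equation: using convexity of $\Phi_\epsilon$ and either a smoothing $\Phi_\epsilon^\delta$ (with identification of the limit via strict convexity/uniqueness of the minimizer) or the measurable-selection characterization of the subdifferential of a convex integral functional, you obtain a selection $\sigma_\epsilon\in\partial\Phi_\epsilon(G-w_\epsilon)$ with $|\sigma_\epsilon|\le\epsilon^{-1}$, hence $Lw_\epsilon=\psi_\epsilon$ weakly with $\|\psi_\epsilon\|_{L^\infty(B_M)}\le R^{-n}+\epsilon^{-1}$, and then interior De Giorgi--Nash--Moser estimates plus the boundary estimates of \cite{GT} (Theorems 8.27/8.29, using that $w_\epsilon-G_{sm}\in W_0^{1,2}(B_M)$, that $G_{sm}$ is \Holder on $\dclosure{B_M}$, and that the sphere satisfies an exterior cone condition) give $w_\epsilon\in C^{0,\gamma}(\dclosure{B_M})$. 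Both arguments are valid; yours costs the extra care at the corner of $\Phi_\epsilon$ (which you correctly flag as the delicate step) but yields as a by-product an explicit linear equation with bounded right-hand side for $w_\epsilon$, foreshadowing the PDE identified later for $w_0$, whereas the paper's quasi-minimizer route is shorter, needs no differentiation of the functional, and its constants do not degenerate as $\epsilon\to0$ (yours blow up like $\epsilon^{-1}$, which is harmless here since $\epsilon$ is fixed). Minor points only: your displayed Euler--Lagrange identity has a sign slip relative to the paper's convention \refeqn{Ldef2} (with $\sigma_\epsilon\in[-\epsilon^{-1},0]$ one gets $\int a^{ij}D_iw_\epsilon D_j\phi=\int(R^{-n}+\sigma_\epsilon)\phi$, i.e.\ $\psi_\epsilon=-R^{-n}-\sigma_\epsilon$ as you state afterwards), which does not affect the regularity conclusion.
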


\noindent
See Chapter 7 of \cite{G}.  % (Maybe Theorem 7.8 on page 232.)

%%%%%%%%%%%%%%%%%%%%%%%%%%%%%%%%%%%%%%%%%%%%%%%%%%%%

\begin{lemma} \label{NotBigatZ}
There exists $\epsilon >0$, $C < \infty$, such that $w_0 \leq C$ in $B_{\epsilon}.$
\end{lemma}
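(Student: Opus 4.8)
The plan is to turn the apparent difficulty---that $w_0$ is pinned \emph{below} the obstacle $G(\cdot,0)$, which tells us nothing near the origin since $G$ blows up there---into the key observation: the obstacle constrains $w_0$ only from above, so the variational inequality forces $w_0$ to be a genuine \emph{subsolution} of an equation with bounded right-hand side on \emph{all} of $B_M$, and subsolutions of divergence-form equations satisfy a maximum principle from above.

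First I would record the variational inequality. Since $J(\cdot,B_M)$ is convex on $W^{1,2}(B_M)$ (its principal part is the uniformly elliptic quadratic form, the rest is linear) and $K_{M,G}$ is convex, the minimizer $w_0$ from Theorem~\ref{exun} satisfies
\begin{equation}
\int_{B_M}\Bigl(a^{ij}D_iw_0\,D_j(\phi-w_0)-R^{-n}(\phi-w_0)\Bigr)\;\geq\;0\qquad\text{for all }\phi\in K_{M,G}.
\label{eq:VInotbig}
\end{equation}
Testing \refeqn{VInotbig} with $\phi=w_0-\eta$, where $\eta\in W^{1,2}_0(B_M)$ and $\eta\geq 0$ (admissible, since lowering $w_0$ keeps it below $G$ and does not disturb its boundary values), yields $-\int_{B_M}a^{ij}D_iw_0\,D_j\eta\geq -R^{-n}\int_{B_M}\eta$, i.e.\ $Lw_0\geq -R^{-n}$ weakly throughout $B_M$---in particular across the coincidence set. (One may note, though it is not needed below, that since $w_0\in W^{1,2}(B_M)$ while $G(\cdot,0)\notin W^{1,2}$ near $0$, the set $\{w_0=G(\cdot,0)\}$ cannot contain a neighbourhood of the origin.)

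Next I would compare on the fixed ball $B_{M/2}$. Because $w_0\leq G(\cdot,0)$ a.e.\ and $G(\cdot,0)$ is continuous, hence bounded, on the compact annulus $\overline{B_{3M/4}}\setminus B_{M/4}$ (which misses the pole), the trace of $w_0$ on $\partial B_{M/2}$ is at most $C_1:=\sup_{\overline{B_{3M/4}}\setminus B_{M/4}}G(\cdot,0)<\infty$. Let $v$ solve $Lv=-R^{-n}$ in $B_{M/2}$ with $v-w_0\in W^{1,2}_0(B_{M/2})$ (existence by Theorem 8.3 of \cite{GT}). Then $w_0-v\in W^{1,2}_0(B_{M/2})$ with $L(w_0-v)\geq 0$, so the weak maximum principle gives $w_0\leq v$ in $B_{M/2}$. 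Splitting $v=v_1+v_2$, where $Lv_1=0$ with $v_1-w_0\in W^{1,2}_0(B_{M/2})$ and $v_2\in W^{1,2}_0(B_{M/2})$ solves $Lv_2=-R^{-n}$, the weak maximum principle gives $v_1\leq C_1$ and Theorem 8.16 of \cite{GT} gives $\|v_2\|_{L^\infty(B_{M/2})}\leq C_2=C(n,\lambda,\Lambda,M)R^{-n}$. Hence $w_0\leq v_1+v_2\leq C_1+C_2$ a.e.\ in $B_{M/2}$, which proves the lemma with $\epsilon=M/2$ and $C=C_1+C_2$.

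The only real obstacle is conceptual: one must resist using the obstacle bound $w_0\leq G$ directly near $0$, where it is empty, and instead recognize that the variational inequality already upgrades $w_0$ to a subsolution of $Lw_0=-R^{-n}$ on the entire ball. After that everything is standard: the global $L^\infty$ bound for divergence-form equations with bounded data (Theorem 8.16 of \cite{GT}), the weak maximum principle, and the local boundedness of $G(\cdot,0)$ away from its pole supplied by \cite{LSW}.
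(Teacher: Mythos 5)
Your proof is correct, but it takes a genuinely different route from the one in the paper. You extract from the minimality of $w_0$ over the convex set $K_{M,G}$ the one-sided Euler--Lagrange inequality $L w_0 \geq -R^{-n}$ weakly in all of $B_M$ (by perturbing downward, which the obstacle permits), note that the obstacle itself bounds the boundary data of $w_0$ on $\partial B_{M/2}$ since $G(\cdot,0)$ is bounded away from its pole, and then conclude by the same decomposition used in the paper's proof of Lemma\refthm{parabd}\!\!: an $L$-harmonic part controlled by the weak maximum principle plus a zero-boundary-data part controlled by Theorem 8.16 of \cite{GT}. The paper argues instead by a chain of comparisons between minimizers: $w_0$ is dominated by the unconstrained minimizer $\bar w$ of $J$ in $H_{M,G}$, which is in turn dominated by the minimizer $w_b$ with constant boundary data $b=\max_{\partial B_M}G$, and $w_b$ is finally bounded by comparing with the explicit quadratic $\ell$ via Corollary 7.1 of \cite{LSW}; that argument yields a bound on all of $B_M$, whereas yours gives one on $B_{M/2}$, which is all the lemma requires. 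Your subsolution step is exactly the fact the paper invokes later (in Lemma\refthm{pdeminimizer}\!\!, ``the standard argument in the calculus of variations leads to $L(w_0)\geq -R^{-n}$''), so your route has the advantage of using only tools already present in the paper (the variational inequality, the weak maximum principle, and \cite{GT} Theorem 8.16) and of bypassing Corollary 7.1 of \cite{LSW} and the auxiliary minimizers $\bar w$, $w_b$; the paper's route, on the other hand, never needs to discuss traces of $w_0$ on an interior sphere and bounds $w_0$ up to $\partial B_M$. The constants in both arguments depend on $n,\lambda,\Lambda,M,R$, which is consistent with how the lemma is used later.
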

\begin{proof}
Let $\bar{w}$ minimize $J(w,B_M)$ among functions $w \in H_{M,G}.$
Then we have $$w_0 \leq \bar{w}.$$
Set $b := C_{big,M} = \max_{\partial B_M} G(x,0),$ and let $w_b$ minimize $J(w,B_M)$ among
$w\in W^{1,2}(B_M)$ with $$w - b \in W_{0}^{1,2}(B_M).$$
Then by the weak maximum principle, we have $$\bar{w} \leq w_b.$$
Next define $\ell(x)$ by
\begin{equation}
   \ell(x) := b+ R^{-n}\left(\frac{M^2-|x|^2}{4n}\right) \leq b+ \frac{R^{-n}M^2}{4n}<\infty.
   \label{eq:ellM1def}
\end{equation}
With this definition, we can observe that $\ell$ satisfies
\begin{alignat*}{1}
  \Delta \ell &= -\frac{R^{-n}}{2}, \ \text{in} \ B_M \ \ \ \text{and} \\
  \ell &\equiv b := \max_{\partial B_M} G \ \text{on} \ \partial B_M.
\end{alignat*}
Now let $\widetilde{\alpha}$ be $b+ \frac{R^{-n}M^2}{4n}$. By Corollary 7.1 in \cite{LSW} applied to $w_b - b$ and
$\ell - b,$ we have $$w_b \leq b + K (\ell - b) \leq b+K\widetilde{\alpha}<\infty.$$
Chaining everything together gives us $$w_0 \leq b + K\widetilde{\alpha}< \infty.$$
\end{proof}

%%%%%%%%%%%%%%%%%%%%%%%%%%%%%%%%%%%%%%%%%%%%%%%%%%%%

\begin{lemma} \label{ordering}
If $0 <\epsilon_1 \leq \epsilon_2,$ then $$w_{\epsilon_1} \leq w_{\epsilon_2}.$$
\end{lemma}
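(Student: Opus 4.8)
The plan is to combine the uniqueness half of Theorem~\ref{exun} with the standard truncation comparison for minimizers of convex functionals. The case $\epsilon_1 = \epsilon_2$ is trivial by uniqueness, so assume $\epsilon_1 < \epsilon_2$ and write $w_i := w_{\epsilon_i}$. Set $v := \min\{w_1,w_2\}$ and $V := \max\{w_1,w_2\}$. Since $W^{1,2}_0(B_M)$ is a lattice and $w_i - G_{sm} \in W^{1,2}_0(B_M)$, both $v$ and $V$ lie in $H_{M,G}$ and are therefore admissible competitors for $J_{\epsilon_1}$ and $J_{\epsilon_2}$ respectively. On the set $A := \{w_1 > w_2\}$ one has $Dv = Dw_2$ and $DV = Dw_1$ a.e., while $v = w_1$, $V = w_2$ a.e.\ off $A$ (using $Dw_1 = Dw_2$ a.e.\ on $\{w_1 = w_2\}$). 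Consequently the two minimality inequalities $J_{\epsilon_1}(v,B_M) \ge J_{\epsilon_1}(w_1,B_M)$ and $J_{\epsilon_2}(V,B_M) \ge J_{\epsilon_2}(w_2,B_M)$ reduce to inequalities between integrals over $A$ alone, everything else cancelling.

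Next I would add the two resulting inequalities. The quadratic terms $a^{ij}D_i w D_j w$ and the linear terms $-2R^{-n}w$ enter with opposite signs and cancel exactly, leaving
\[
  \int_A \Bigl[ \bigl(\Phi_{\epsilon_1}-\Phi_{\epsilon_2}\bigr)(G-w_2) - \bigl(\Phi_{\epsilon_1}-\Phi_{\epsilon_2}\bigr)(G-w_1) \Bigr] \ge 0 .
\]
Set $\psi := \Phi_{\epsilon_1}-\Phi_{\epsilon_2}$. From the explicit formula for $\Phi_\epsilon$ and $\epsilon_1 < \epsilon_2$, the function $\psi$ vanishes on $[0,\infty)$, is strictly decreasing on $(-\infty,0]$, and is nonnegative throughout. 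On $A$ we have $w_1 > w_2$, hence $G-w_1 < G-w_2$, so $\psi(G-w_2)-\psi(G-w_1) \le 0$ pointwise on $A$; together with the displayed inequality this forces $\psi(G-w_1) = \psi(G-w_2)$ a.e.\ on $A$. Since $\psi$ is strictly monotone exactly on the region where it is nonzero, this equality together with $G-w_1 < G-w_2$ is possible only when $G-w_1 \ge 0$. Thus $w_2 < w_1 \le G$ a.e.\ on $A$, and in particular all four quantities $\Phi_{\epsilon_j}(G-w_i)$ vanish a.e.\ on $A$.

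Finally I would feed this back into the two minimality inequalities. With the penalization terms absent on $A$, one inequality reads $\int_A(a^{ij}D_i w_2 D_j w_2 - 2R^{-n}w_2) \ge \int_A(a^{ij}D_i w_1 D_j w_1 - 2R^{-n}w_1)$ and the other is its reverse, so both are equalities; hence $J_{\epsilon_1}(v,B_M) = J_{\epsilon_1}(w_1,B_M) = \ell_{\epsilon_1}$, i.e.\ $v = \min\{w_1,w_2\}$ is itself a minimizer of $J_{\epsilon_1}$ over $H_{M,G}$. Uniqueness from Theorem~\ref{exun} then yields $v = w_1$ a.e., which is precisely $w_1 \le w_2$. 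The only genuinely delicate step is the analysis of $\psi$: one must use the integral inequality together with the pointwise sign of $\psi(G-w_2)-\psi(G-w_1)$ to exclude $w_1 > G$ on a subset of $A$ of positive measure; the remainder is routine bookkeeping with the chain rule for $\min$ and $\max$ in $W^{1,2}$ and the already-established uniqueness.
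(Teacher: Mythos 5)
Your argument is correct, and while it rests on the same key ingredient as the paper's proof, it is organized in a genuinely different way. The paper localizes: it sets $\Omega_1 := \{w_{\epsilon_1} > w_{\epsilon_2}\}$, notes that $w_{\epsilon_1}$ and $w_{\epsilon_2}$ minimize $J_{\epsilon_1}(\cdot,\Omega_1)$ and $J_{\epsilon_2}(\cdot,\Omega_1)$ with common boundary values on $\partial\Omega_1$, and uses the $J_{\epsilon_2}$-minimality of $w_{\epsilon_2}$ to bound $J_{\epsilon_1}(w_{\epsilon_2},\Omega_1) - J_{\epsilon_1}(w_{\epsilon_1},\Omega_1)$ by the penalization difference, which it asserts is strictly negative because $\Phi_{\epsilon_1}$ decreases at least as fast as $\Phi_{\epsilon_2}$, contradicting minimality of $w_{\epsilon_1}$. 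You instead work globally on $B_M$ with the lattice competitors $\min\{w_{\epsilon_1},w_{\epsilon_2}\}$ and $\max\{w_{\epsilon_1},w_{\epsilon_2}\}$, add the two minimality inequalities so the quadratic and linear parts cancel, and arrive at $\int_A \bigl[\psi(G-w_{\epsilon_2})-\psi(G-w_{\epsilon_1})\bigr] \ge 0$ with $\psi = \Phi_{\epsilon_1}-\Phi_{\epsilon_2}$, which is the same monotonicity-in-$\epsilon$ ingredient the paper uses; restricting to $\Omega_1$ and exchanging on $A$ are the same locality device in different clothing. Where your route genuinely differs, and is in fact more careful, is the endgame: if $w_{\epsilon_1} \le G$ a.e.\ on the bad set, all the penalization terms vanish there and the paper's claimed strict inequality degenerates to an equality, so the contradiction must come from uniqueness (or strict convexity of the Dirichlet part) rather than from strictness; this is precisely the case you isolate by proving $w_{\epsilon_1} \le G$ a.e.\ on $A$, concluding that $\min\{w_{\epsilon_1},w_{\epsilon_2}\}$ is another minimizer of $J_{\epsilon_1}$, and invoking the uniqueness half of Theorem~\ref{exun}. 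The price is the extra bookkeeping with the chain rule for $\min$ and $\max$ in $W^{1,2}$ and an a.e.-to-everywhere upgrade (immediate from the continuity of $w_{\epsilon}$, Theorem~\ref{contofmin}); what you buy is a proof with no strictness gap and no need to argue on the possibly rough set $\partial\Omega_1$.
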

\begin{proof}
Assume $0 <\epsilon_1 \leq \epsilon_2,$ and assume that $$\Omega_1 := \{ w_{\epsilon_1} > w_{\epsilon_2} \}$$
is not empty.  Since $w_{\epsilon_1} = w_{\epsilon_2}$ on $\partial B_M,$ since $\Omega_1 \subset B_M,$ and
since $w_{\epsilon_1}$ and $w_{\epsilon_2}$ are continuous functions, we know that
$w_{\epsilon_1} = w_{\epsilon_2}$ on $\partial \Omega_1.$
Then it is clear that among functions with the same data on
$\partial \Omega_1,$ \ $w_{\epsilon_1}$ and $w_{\epsilon_2}$ are minimizers of
$J_{\epsilon_1}(\cdot, \Omega_1)$ and $J_{\epsilon_2}(\cdot, \Omega_1)$
respectively.  Since we will restrict our attention to $\Omega_1$ for the rest of this proof, we will use
$J_{\epsilon}(w)$ to denote $J_{\epsilon}(w,\Omega_1).$ \\
\\
$J_{\epsilon_2}(w_{\epsilon_2}) \leq J_{\epsilon_2}(w_{\epsilon_1})$ implies
\begin{align*} & \int_{\Omega_1} a^{ij}D_i w_{\epsilon_2}D_j w_{\epsilon_2} -
                               2R^{-n}w_{\epsilon_2}+2\Phi_{\epsilon_2}(G-w_{\epsilon_2}) \\
                \leq  & \int_{\Omega_1} a^{ij}D_i w_{\epsilon_1}D_j w_{\epsilon_1} -
                               2R^{-n}w_{\epsilon_1}+2\Phi_{\epsilon_2}(G-w_{\epsilon_1}) \;,
\end{align*}
and by rearranging this inequality we get
$$\int_{\Omega_1} (a^{ij}D_i w_{\epsilon_2}D_j w_{\epsilon_2} - 2R^{-n}w_{\epsilon_2}) -
    \int_{\Omega_1} (a^{ij}D_i w_{\epsilon_1}D_j w_{\epsilon_1} - 2R^{-n}w_{\epsilon_1})$$
$$\leq \int_{\Omega_1} 2 \Phi_{\epsilon_2}(G-w_{\epsilon_1}) - 2\Phi_{\epsilon_2}(G-w_{\epsilon_2}) \;.$$
Therefore,
\begin{align*} &J_{\epsilon_1}(w_{\epsilon_2}) - J_{\epsilon_1}(w_{\epsilon_1}) \\
                     =& \int_{\Omega_1} a^{ij}D_i w_{\epsilon_2} D_j w_{\epsilon_2}
                                                     - 2R^{-n}w_{\epsilon_2}
                                                    + 2\Phi_{\epsilon_1}(G-w_{\epsilon_2}) \\
              & \ \ \ \ \ - \int_{\Omega_1} a^{ij}D_i w_{\epsilon_1} D_j w_{\epsilon_1}
                                                         - 2R^{-n}w_{\epsilon_1}
                                                        + 2\Phi_{\epsilon_1}(G-w_{\epsilon_1}) \\
                 \leq &\; 2 \int_{\Omega_1} \left[ \rule{0 in}{.15 in} \Phi_{\epsilon_2}(G-w_{\epsilon_1})
                                                                                                  - \Phi_{\epsilon_2}(G-w_{\epsilon_2}) \right] \\
              & \ \ \ \ \ - 2 \int_{\Omega_1} \left[ \rule{0 in}{.15 in} \Phi_{\epsilon_1}(G-w_{\epsilon_1})
                                                                                                   - \Phi_{\epsilon_1}(G-w_{\epsilon_2}) \right] \\
                     < &\; 0
\end{align*}
since $G-w_{\epsilon_1} < G-w_{\epsilon_2}$ in $\Omega_1$ and
$\Phi_{\epsilon_1}$ decreases as fast or faster than $\Phi_{\epsilon_2}$ decreases everywhere.
This inequality contradicts the fact that $w_{\epsilon_1}$ is the minimizer of $J_{\epsilon_1}(w)$.
Therefore, $w_{\epsilon_1} \leq w_{\epsilon_2}$ everywhere in $\Omega$.
\end{proof}

%%%%%%%%%%%%%%%%%%%%%%%%%%%%%%%%%%%%%%%%%%%%%%%%%%%%

\begin{lemma} \label{w0lwep}
$w_0 \leq w_{\epsilon}$ for every $\epsilon > 0$.
\end{lemma}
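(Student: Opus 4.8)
The plan is to mimic the comparison scheme already used in Lemma~\refthm{ordering}: assume the conclusion fails, so that the (measurable) set
$$\Omega_1 := \{ w_0 > w_\epsilon \}$$
has positive measure, and derive a contradiction with the uniqueness half of Theorem~\refthm{exun}. The two natural competitors are $u := \min\{w_0, w_\epsilon\}$ and $v := \max\{w_0, w_\epsilon\}$. Since $w_0 - G_{sm}$ and $w_\epsilon - G_{sm}$ both lie in $W_0^{1,2}(B_M)$, so do $u - G_{sm}$ and $v - G_{sm}$, hence $u, v \in H_{M,G}$. Moreover $u \in K_{M,G}$: on $B_M \setminus \Omega_1$ we have $u = w_0 \leq G$, while on $\Omega_1$ we have $u = w_\epsilon < w_0 \leq G$. (The function $v$ need not lie in $K_{M,G}$, which is exactly why one is forced to use the penalized functional $J_\epsilon$ on the $w_\epsilon$ side.)

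First I would use the minimality of $w_0$ against $u$. Because $u$ agrees with $w_0$ off $\Omega_1$ and with $w_\epsilon$ on $\Omega_1$ (with the usual splitting of Dirichlet energies, the gradients of $\min$/$\max$ of Sobolev functions presenting no difficulty), one gets $J(u, B_M) - J(w_0, B_M) = J(w_\epsilon, \Omega_1) - J(w_0, \Omega_1)$, so minimality yields
$$J(w_0, \Omega_1) \leq J(w_\epsilon, \Omega_1). \qquad (\ast)$$
Symmetrically, minimality of $w_\epsilon$ against $v$ gives $J_\epsilon(w_\epsilon, \Omega_1) \leq J_\epsilon(w_0, \Omega_1)$. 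Now the obstacle is inactive for $w_0$ on $\Omega_1$: since $w_0 \leq G$ there, $\Phi_\epsilon(G - w_0) \equiv 0$ on $\Omega_1$, whence $J_\epsilon(w_0, \Omega_1) = J(w_0, \Omega_1)$; and since $\Phi_\epsilon \geq 0$ everywhere, $J_\epsilon(w_\epsilon, \Omega_1) \geq J(w_\epsilon, \Omega_1)$. Chaining these, $J(w_\epsilon, \Omega_1) \leq J_\epsilon(w_\epsilon, \Omega_1) \leq J_\epsilon(w_0, \Omega_1) = J(w_0, \Omega_1)$, which together with $(\ast)$ forces $J(w_0, \Omega_1) = J(w_\epsilon, \Omega_1)$.

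Consequently $J(u, B_M) = J(w_0, B_M \setminus \Omega_1) + J(w_\epsilon, \Omega_1) = J(w_0, B_M) = \ell_0$, so $u$ is itself a minimizer of $J(\cdot, B_M)$ over $K_{M,G}$. By the uniqueness assertion of Theorem~\refthm{exun}, $u = w_0$ a.e., contradicting $u = w_\epsilon < w_0$ on the positive-measure set $\Omega_1$. Hence $\Omega_1$ is null, i.e. $w_0 \leq w_\epsilon$ a.e.; using the continuity of $w_\epsilon$ from Theorem~\refthm{contofmin} (and, if needed, the corresponding continuity of $w_0$) this becomes the pointwise inequality.

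I expect the only real obstacle to be bookkeeping rather than ideas: one must verify carefully that the competitor $u$ stays inside the constraint set $K_{M,G}$ — this is precisely where the hypothesis that $w_\epsilon < w_0 \le G$ on $\Omega_1$ is used — and that the penalty term $\Phi_\epsilon(G-\cdot)$ is asymmetric in exactly the right direction, vanishing for $w_0$ on $\Omega_1$ while contributing only a nonnegative amount for $w_\epsilon$, since it is this asymmetry that lets the two one-sided inequalities close up into an equality. The manipulations with $\min$, $\max$, and the localization of the functionals to $\Omega_1$ are all routine.
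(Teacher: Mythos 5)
Your proof is correct and follows essentially the same route as the paper: both arguments test the constrained minimizer against $\min\{w_0,w_\epsilon\}$ and the penalized minimizer against $\max\{w_0,w_\epsilon\}$, exploit that $\Phi_\epsilon(G-w_0)\equiv 0$ (since $w_0\le G$) while $\Phi_\epsilon\ge 0$, and conclude via the uniqueness of $w_0$ in $K_{M,G}$. The only difference is bookkeeping — you localize to $\Omega_1$ and extract the equality $J(w_0,\Omega_1)=J(w_\epsilon,\Omega_1)$, whereas the paper chains the inequalities over $B_M$ directly — so this is the same argument in a slightly different arrangement.
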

\begin{proof}
Let $S:=\{w_0 > w_\epsilon\}$ be a nonempty set, let $w_1 := \min \{w_0, w_{\epsilon} \},$ and
let $w_2 := \max \{w_0, w_{\epsilon} \}.$  It follows that $w_1 \leq G$ and both $w_1$ and $w_2$
belong to $W^{1,2}(B_M).$  Since $\Phi_{\epsilon} \geq 0,$ we know that for any $\Omega \subset B_M$ we have
\begin{equation}
    J(w,\Omega) \leq J_{\epsilon}(w,\Omega)
\label{eq:epbigger}
\end{equation}
for any permissible $w.$  We also know that since $w_0 \leq G$ we have:
\begin{equation}
    J(w_0,\Omega) = J_{\epsilon}(w_0,\Omega) \;.
\label{eq:epnobiggerforw0}
\end{equation}
Now we estimate:
\begin{alignat*}{1}
J_{\epsilon}(w_1, B_M)
   &= J_{\epsilon}(w_1, S) +  J_{\epsilon}(w_1, S^c) \\
   &= J_{\epsilon}(w_{\epsilon}, S) + J_{\epsilon}(w_0, S^c) \\
   &= J_{\epsilon}(w_{\epsilon}, B_M) - J_{\epsilon}(w_{\epsilon}, S^c) + J_{\epsilon}(w_0, S^c) \\
   &\leq J_{\epsilon}(w_2, B_M) - J_{\epsilon}(w_{\epsilon}, S^c) + J_{\epsilon}(w_0, S^c) \\
   &= J_{\epsilon}(w_0, S) + J_{\epsilon}(w_{\epsilon}, S^c) - J_{\epsilon}(w_{\epsilon}, S^c) + J_{\epsilon}(w_0, S^c) \\
   &= J_{\epsilon}(w_0, S) + J_{\epsilon}(w_0, S^c) \\
   &= J_{\epsilon}(w_0, B_M) \;.
\end{alignat*}
Now by combining this inequality with Equations\refeqn{epbigger}and\refeqn{epnobiggerforw0}\!\!, we get:
$$J(w_1, B_M) \leq J_{\epsilon}(w_1, B_M) \leq J_{\epsilon}(w_0, B_M)  = J(w_0, B_M) \;,$$
but if $S$ is nonempty, then this inequality contradicts the fact that $w_0$ is the unique minimizer of $J$
among functions in $K_{M,G}.$
\end{proof}

\noindent
Now, since $w_\epsilon$ decreases as $\epsilon \rightarrow 0,$ and since the $w_\epsilon$'s are bounded from below
by $w_0,$ there exists $$\widetilde{w}= \lim_{\epsilon\rightarrow 0}w_{\epsilon}$$ and $w_0 \leq \widetilde{w}$.

%%%%%%%%%%%%%%%%%%%%%%%%%%%%%%%%%%%%%%%%%%%%%%%%%%%%

\begin{lemma} \label{lsG}
With the definitions as above, $\tilde{w} \leq G$ almost everywhere.
\end{lemma}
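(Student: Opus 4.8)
The plan is to exploit the penalization built into $J_\epsilon$. The term $2\Phi_\epsilon(G-w)$ equals $2\epsilon^{-1}(w-G)_+$ pointwise (writing $(t)_+:=\max\{t,0\}$), so any bound on $J_\epsilon(w_\epsilon,B_M)$ that is uniform in $\epsilon$ immediately forces $\int_{B_M}(w_\epsilon-G)_+$ to be of order $\epsilon$, and the conclusion then follows by letting $\epsilon\to 0$.

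First I would record the trivial energy comparison. Since $w_0\in K_{M,G}\subset H_{M,G}$ and $w_0\le G$ in $B_M$, the penalization term vanishes for $w_0$, so $J_\epsilon(w_0,B_M)=J(w_0,B_M)=\ell_0<\infty$; minimality of $w_\epsilon$ in $H_{M,G}$ then gives $J_\epsilon(w_\epsilon,B_M)\le\ell_0$. Next I would bound $J(w_\epsilon,B_M)$ from below uniformly in $\epsilon$ for $\epsilon$ small. Fix any $\epsilon_0>0$; by Lemma~\ref{ordering}, $w_\epsilon\le w_{\epsilon_0}$ for all $\epsilon\le\epsilon_0$, and $w_{\epsilon_0}$ is bounded on $\dclosure{B_M}$ by Theorem~\ref{contofmin}, say $w_\epsilon\le C_0$. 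Since the Dirichlet integrand $a^{ij}D_iw_\epsilon D_jw_\epsilon$ is nonnegative by ellipticity,
\[
   J(w_\epsilon,B_M)\ \ge\ -2R^{-n}\int_{B_M}w_\epsilon\ \ge\ -2R^{-n}C_0|B_M|\ =:\ -C_1 .
\]
Subtracting, $2\int_{B_M}\Phi_\epsilon(G-w_\epsilon)=J_\epsilon(w_\epsilon,B_M)-J(w_\epsilon,B_M)\le\ell_0+C_1$, that is,
\[
   \int_{B_M}(w_\epsilon-G)_+\ \le\ \tfrac12(\ell_0+C_1)\,\epsilon .
\]

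Finally I would pass to the limit. The $w_\epsilon$ decrease pointwise to $\tilde w$ as $\epsilon\to 0$, so $(w_\epsilon-G)_+$ decreases pointwise to $(\tilde w-G)_+$, dominated for $\epsilon\le\epsilon_0$ by the integrable function $(w_{\epsilon_0}-G)_+$; the dominated convergence theorem gives
\[
   \int_{B_M}(\tilde w-G)_+\ =\ \lim_{\epsilon\to0}\int_{B_M}(w_\epsilon-G)_+\ \le\ \lim_{\epsilon\to0}\tfrac12(\ell_0+C_1)\,\epsilon\ =\ 0 ,
\]
so $(\tilde w-G)_+=0$ almost everywhere in $B_M$, which is the claim. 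I expect the only delicate point to be the uniform lower bound for $J(w_\epsilon,B_M)$: one must know that $\ell_0$ is finite (part of Theorem~\ref{exun}) and, more importantly, that the $w_\epsilon$ do not escape to $+\infty$ as $\epsilon\to0$ — this is exactly what the monotonicity in Lemma~\ref{ordering} together with the continuity, hence boundedness, of a single $w_{\epsilon_0}$ supplies. Everything else is bookkeeping.
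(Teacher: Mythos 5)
Your proposal is correct and takes essentially the same route as the paper's (sketched) proof: minimality gives $J_{\epsilon}(w_{\epsilon},B_M)\le J_{\epsilon}(w_0,B_M)=\ell_0<\infty$, so the penalization term $2\epsilon^{-1}\int_{B_M}(w_{\epsilon}-G)_+$ must stay bounded, which is exactly the blow-up contradiction the paper invokes, phrased quantitatively. You also supply the details the paper's sketch omits, namely the uniform lower bound on the unpenalized part of the energy via Lemma~\ref{ordering} and the boundedness of $w_{\epsilon_0}$, and the passage to the limit (dominated convergence, or simply Fatou, suffices).
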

\begin{proof}
This fact is fairly obvious, and the proof is fairly straightforward, so we supply only a sketch. \\
\\
Suppose not.  Then there exists an $\alpha > 0$ such that
$$\tilde{S} := \{ \tilde{w} - G \geq \alpha \}$$
has positive measure.  On this set we automatically have $w_{\epsilon} - G \geq \alpha \;.$
We compute $J_{\epsilon}(w_{\epsilon}, B_M)$ and send $\epsilon$ to zero.
We will get $J_{\epsilon}(w_{\epsilon}, B_M) \rightarrow \infty$ which gives us a contradiction.
\end{proof}

%%%%%%%%%%%%%%%%%%%%%%%%%%%%%%%%%%%%%%%%%%%%%%%%%%%%

\begin{lemma} \label{widewisw0}
$\widetilde{w}=w_0$ in $W^{1,2}(B_M).$
\end{lemma}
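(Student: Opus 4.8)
The plan is to show that the monotone limit $\widetilde{w}$ is itself an admissible competitor in the constrained minimization defining $w_0$ and that it attains the minimal value $\ell_0 = J(w_0,B_M)$; the uniqueness clause of Theorem\refthm{exun}then forces $\widetilde{w}=w_0$ as elements of $W^{1,2}(B_M).$ We already have $w_0 \leq \widetilde{w}$ from Lemma\refthm{w0lwep}and $\widetilde{w}\leq G$ a.e.\ from Lemma\refthm{lsG}, so the pointwise side of the constraint is in hand; what remains is to pin down the boundary condition $\widetilde{w}-G_{sm}\in W_0^{1,2}(B_M)$ and the energy inequality $J(\widetilde{w},B_M)\leq \ell_0$, and both will fall out of weak $W^{1,2}$ convergence of the family $w_\epsilon.$

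First I would record a one-sided energy estimate: since $w_\epsilon$ minimizes $J_\epsilon(\cdot,B_M)$ over $H_{M,G}$ and $w_0\leq G$ makes $\Phi_\epsilon(G-w_0)\equiv 0$ (Equation\refeqn{epnobiggerforw0}), we get $J_\epsilon(w_\epsilon,B_M)\leq J_\epsilon(w_0,B_M)=J(w_0,B_M)=\ell_0$, and then Equation\refeqn{epbigger}(i.e.\ $\Phi_\epsilon\geq 0$) gives $J(w_\epsilon,B_M)\leq \ell_0$ uniformly in $\epsilon.$ Next I would extract a uniform $W^{1,2}$ bound: by Lemmas\refthm{ordering}and\refthm{w0lwep}we have the sandwich $w_0\leq w_\epsilon\leq w_{\epsilon_0}$ for $\epsilon\leq\epsilon_0$, so $|w_\epsilon|\leq |w_0|+|w_{\epsilon_0}|$ gives a uniform $L^2(B_M)$ bound since $w_0,w_{\epsilon_0}\in W^{1,2}(B_M);$ combining this with $J(w_\epsilon,B_M)\leq\ell_0$ and uniform ellipticity, $\lambda\int_{B_M}|\nabla w_\epsilon|^2 \leq J(w_\epsilon,B_M)+2R^{-n}\int_{B_M}w_\epsilon \leq C$, and with the fixed boundary trace this yields $\|w_\epsilon\|_{W^{1,2}(B_M)}\leq C.$ Hence a subsequence converges weakly in $W^{1,2}(B_M);$ since $w_\epsilon\to\widetilde{w}$ pointwise a.e.\ (monotone limit), the weak limit is $\widetilde{w}$, and in fact the whole family converges weakly to $\widetilde{w}.$ Weak convergence preserves the closed affine subspace $H_{M,G}$, so $\widetilde{w}-G_{sm}\in W_0^{1,2}(B_M)$, which together with Lemma\refthm{lsG}places $\widetilde{w}\in K_{M,G}.$

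To finish, pass to the limit in the energy: the Dirichlet term $w\mapsto\int_{B_M}a^{ij}D_iwD_jw$ is convex and strongly continuous on $W^{1,2}$, hence weakly lower semicontinuous, while $w\mapsto -2R^{-n}\int_{B_M}w$ is weakly continuous, so $J(\widetilde{w},B_M)\leq\liminf_{\epsilon\to 0}J(w_\epsilon,B_M)\leq\ell_0.$ Since $\widetilde{w}\in K_{M,G}$ we also have $J(\widetilde{w},B_M)\geq\ell_0$, so $\widetilde{w}$ minimizes $J$ over $K_{M,G}$, and the uniqueness in Theorem\refthm{exun}gives $\widetilde{w}=w_0$ in $W^{1,2}(B_M).$ The one place needing care is the uniform gradient bound --- that the fixed competitor value $\ell_0$ genuinely controls $\|\nabla w_\epsilon\|_{L^2}$ --- but this is routine once the $L^2$ bound coming from the sandwich $w_0\leq w_\epsilon\leq w_{\epsilon_0}$ is noted; everything else is soft weak-compactness and lower semicontinuity.
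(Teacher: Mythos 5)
Your proposal is correct and follows the same route as the paper's proof: compare $J_\epsilon(w_\epsilon)$ with $J_\epsilon(w_0)$ using $w_0\le G$ to kill the penalty term, pass to the limit to get $J(\widetilde{w},B_M)\le J(w_0,B_M)$, note $\widetilde{w}$ is admissible via Lemma\refthm{lsG}\!\!, and invoke uniqueness from Theorem\refthm{exun}\!\!. The only difference is that you spell out the limit passage (uniform $W^{1,2}$ bound from the sandwich $w_0\le w_\epsilon\le w_{\epsilon_0}$, weak compactness, weak lower semicontinuity, and weak closedness of $H_{M,G}$), details the paper leaves implicit in ``letting $\epsilon\rightarrow 0$.''
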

\begin{proof} Since for any $\epsilon$, $w_{\epsilon}$ is the minimizer of $J_{\epsilon}(w, B_M)$, we have
\begin{align*} J_{\epsilon}(w_{\epsilon}, B_M)
     &\leq J_{\epsilon}(w_0, B_M) \\
     &\leq \int_{B_M} a^{ij}D_i w_0 D_j w_0 - 2R^{-n} w_0 + 2 \Phi_{\epsilon}(G - w_\epsilon),
\end{align*}
and after canceling the terms with $\Phi_{\epsilon}$ we have:
$$\int_{B_M} a^{ij}D_i w_{\epsilon}D_j w_{\epsilon} - 2R^{-n} w_{\epsilon}\leq \int_{B_M} a^{ij}D_i w_0 D_j w_0 - 2R^{-n} w_0.$$
Letting $\epsilon\rightarrow 0$ gives us
$$J(\tilde{w}, B_M) \leq J(w_0, B_M) \;.$$
However, by Proposition\refthm{lsG}\!\!, $\widetilde{w}$ is a permissible
competitor for the problem $\inf_{w \in K_{M,G}} J(w,B_M)$, so we have
$$J(w_0, B_M)\leq J(\tilde{w}, B_M).$$
Therefore $$J(w_0, B_M)= J(\tilde{w}, B_M),$$ and then by uniqueness, $\widetilde{w}=w_0.$
\end{proof}

%%%%%%%%%%%%%%%%%%%%%%%%%%%%%%%%%%%%%%%%%%%%%%%%%%%%

\noindent
Let $W$ solve:
%\BVP{L(w)=-\chisub{ \{ w < G \} }R^{-n}}{w=G_{sm}}{HaoEqn}{B_M}
\BVPhao{L(w)=-\chisub{ \{ w< G \} }R^{-n}}{w=G_{sm}}{whatdef}{B_M} \\
The existence of such a $W$ is guaranteed by combining Theorem\refthm{ExistPWG}with
Corollary\refthm{RTG}\!\!.  (Signs are reversed, so to be completely precise one must apply the theorems
to the problem solved by $G-W.$)

%%%%%%%%%%%%%%%%%%%%%%%%%%%%%%%%%%%%%%%%%%%%%%%%%%%%

\begin{lemma} \label{whatlG}
$W \leq G$ in $B_M$.
\end{lemma}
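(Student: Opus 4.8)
The plan is to compare $W$ with $G(\cdot,0)$ directly via the weak maximum principle, using the sign of the right-hand side of the PDE that $W$ solves together with the fact that $G$ is $L$-harmonic away from the origin. Set $v := W - G(\cdot,0)$ and let $\Omega^{+} := \{ v > 0 \} = \{ W > G \}$. Since $W = G_{sm} \equiv G(\cdot,0)$ on $\partial B_M$ and both functions are continuous (for $G$ this holds away from $0$, and the set $\Omega^{+}$ will be seen to avoid $0$), we have $v = 0$ on $\partial \Omega^{+}$. The key observation is that on $\Omega^{+}$ we have $W > G \geq G_{sm}$, so in particular $W$ is strictly above $G$ there, hence $\chisub{\{W < G\}} = 0$ on $\Omega^{+}$, which means $L(W) = 0$ in $\Omega^{+}$. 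Away from the origin $L(G(\cdot,0)) = 0$ as well, so $L(v) = 0$ in $\Omega^{+} \setminus \{0\}$.

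The one subtlety is the behavior at $x = 0$: the Green's function blows up there, $G(x,0) \to +\infty$ as $x \to 0$, so $0 \notin \overline{\Omega^{+}}$ as long as $W$ is locally bounded near $0$. I would argue that $W \in W^{1,2}(B_M) \cap L^{\infty}_{loc}$ (or at least $W$ is bounded above near $0$, which is what is needed) — this follows because $W$ solves an obstacle-type problem with bounded right-hand side $-\chisub{\{W<G\}}R^{-n}$ and the regularity theory of the previous sections (in particular the optimal regularity bound, applied to $G - W$, or simply Corollary \ref{RTG} together with standard De Giorgi–Nash–Moser estimates for the equation with bounded right-hand side) gives local boundedness. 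Since $G(x,0) \to \infty$ near $0$ and $W$ stays bounded there, there is a neighborhood of $0$ on which $W < G$, so $\Omega^{+}$ is a compactly contained open subset of $B_M \setminus \{0\}$, on the closure of which $v$ is continuous and $L v = 0$ in its interior with $v = 0$ on its boundary.

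Now apply the weak maximum principle for $L$ on $\Omega^{+}$: since $L v = 0$ in $\Omega^{+}$ and $v = 0$ on $\partial \Omega^{+}$, we conclude $v \leq 0$ throughout $\Omega^{+}$, contradicting the definition of $\Omega^{+}$ unless $\Omega^{+} = \emptyset$. Hence $W \leq G$ in $B_M$. The main obstacle is the bookkeeping at the pole of the Green's function — one must be careful that $\Omega^{+}$ genuinely stays away from $0$ so that the maximum principle applies on a clean domain; this is handled by the local boundedness of $W$ against the blow-up of $G$. Everything else is a routine application of the maximum principle once the geometry of $\Omega^{+}$ is pinned down.
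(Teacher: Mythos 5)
Your proposal is correct and follows essentially the same route as the paper: both define $u = W - G$ on the set $\{W > G\}$, observe that this set avoids the pole of $G$ because $W$ is bounded while $G$ blows up at $0$, note that $L(W) = L(G) = 0$ there since the right-hand side $-\chisub{\{W<G\}}R^{-n}$ vanishes on $\{W>G\}$, and conclude by the weak maximum principle. Your extra remarks justifying the local boundedness of $W$ near the origin simply make explicit a point the paper states without elaboration.
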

\begin{proof}  Let $\Omega= \{ W > G \}$ and $u:= W - G.$ Since $G$ is infinite at $0,$ and since
$W$ is bounded, and both $G$ and $W$ are continuous, we know there exists an $\epsilon > 0$
such that $\Omega \cap B_{\epsilon} = \phi.$ Then if $\Omega \neq \phi,$ then
$u$ has a positive maximum in the interior of $\Omega.$  However, since $L(W)=L(G)=0$ in $\Omega,$
we would get a contradiction from the weak maximum principle. Therefore, we have $W \leq G$ in $B_M$.
\end{proof}

%%%%%%%%%%%%%%%%%%%%%%%%%%%%%%%%%%%%%%%%%%%%%%%%%%%%

\begin{lemma} \label{wtilgwhat}
$\tilde{w} \geq W$.
\end{lemma}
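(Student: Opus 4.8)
The plan is to show $\tilde w \ge W$ by exhibiting $\tilde w$ as a limit of functions $w_\epsilon$ that each dominate $W$, or more directly, by using the variational characterization of $\tilde w = w_0$ together with a comparison argument against $W$. Since $W$ solves $L(W) = -\chi_{\{W < G\}}R^{-n}$ with $W = G_{sm}$ on $\partial B_M$ and $W \le G$ by Lemma~\ref{whatlG}, the function $W$ lies in $K_{M,G}$ and is therefore a permissible competitor for the functional $J(\cdot, B_M)$ whose unique minimizer is $w_0 = \tilde w$. So the natural route is: assume the set $S := \{W > \tilde w\}$ has positive measure and derive a contradiction with the minimality of $\tilde w = w_0$ in $K_{M,G}$.

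First I would set $v_1 := \max\{\tilde w, W\}$ and $v_2 := \min\{\tilde w, W\}$, note that both lie in $W^{1,2}(B_M)$, that $v_1 = \tilde w = W$ on $\partial B_M$ (both have boundary data $G_{sm}$), and that $v_1 \le G$ a.e.\ (since both $\tilde w \le G$ by Lemma~\ref{lsG} and $W \le G$ by Lemma~\ref{whatlG}), so $v_1 \in K_{M,G}$. The key computation is to compare $J(v_1, B_M)$ with $J(\tilde w, B_M)$ by splitting the integral over $S$ and $S^c$, exactly as in the proof of Lemma~\ref{w0lwep}: on $S^c$ the two agree, and on $S$ one replaces $\tilde w$ by $W$. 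The difference $J(v_1, S) - J(\tilde w, S)$ equals $J(W, S) - J(\tilde w, S)$, and I must show this is $\le 0$, with strict inequality when $|S| > 0$, contradicting uniqueness of $w_0$.

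To control $J(W,S) - J(\tilde w, S)$ I would use that $W$ solves its PDE. On $S = \{W > \tilde w\}$ we have in particular $W > \tilde w \ge w_0 \ge 0$; more importantly, on $S$ the test function $\phi := W - \tilde w = W - v_2 \ge 0$ vanishes on $\partial S$ (by continuity of $W$ and $\tilde w$, the latter from continuity of $w_0$), so it is a legitimate test function in the weak formulation of $L(W) = -\chi_{\{W<G\}}R^{-n} \le 0$ on $S$. Testing $L(W) \le -\chi_{\{W<G\}}R^{-n}$ against $\phi \ge 0$ and combining with the quadratic structure of $J$ — using the algebraic identity $J(W,S) - J(v_2,S) = \int_S a^{ij}D_i(W - v_2)D_j(W+v_2) - 2R^{-n}(W - v_2)$ and then $\int_S a^{ij}D_iW D_j\phi \le -\int_S R^{-n}\chi_{\{W<G\}}\phi$ — should give $J(W,S) \le J(\tilde w, S)$, and in fact strictly less unless $D_i(W - \tilde w) \equiv 0$ on $S$, which together with $\phi = 0$ on $\partial S$ forces $|S| = 0$.

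The main obstacle will be making the comparison on the \emph{sub}domain $S$ rigorous: one needs $\tilde w = w_0$ to be continuous (to know $\phi$ vanishes on $\partial S$ and hence $\phi \in W_0^{1,2}(S)$), and one needs $W$ to be an admissible competitor, i.e.\ genuinely in $K_{M,G}$, which relies on Lemma~\ref{whatlG} and on the boundary condition $W = G_{sm}$ matching that of $w_0$. Continuity of $w_0$ follows from the regularity theory of Section~\ref{sec-OpRegNonDeg} (as applied to the obstacle problem solved by $w_0$, or via the limit of the continuous $w_\epsilon$), but I would want to flag that dependence. An alternative, perhaps cleaner, route that sidesteps continuity of $w_0$ is to instead work with $w_\epsilon$: show $w_\epsilon \ge W$ for each $\epsilon > 0$ by the same max/min splitting on $\{W > w_\epsilon\}$ using that $w_\epsilon$ is continuous (Theorem~\ref{contofmin}) and minimizes $J_\epsilon$, noting $J_\epsilon(W, \cdot) = J(W,\cdot)$ since $W \le G$; then let $\epsilon \to 0$ to get $\tilde w = \lim w_\epsilon \ge W$. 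I would likely present this $w_\epsilon$ version as the main argument.
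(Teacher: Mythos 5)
Your argument is correct in outline, but it runs on a genuinely different engine than the paper's. The paper makes the same reduction you say you would feature (prove $w_{\epsilon}\geq W$ for each $\epsilon>0$, then let $\epsilon\to 0$), but then argues by a comparison principle: on $\Omega:=\{w_{\epsilon}<W\}$ one has $\Omega\subset\{w_{\epsilon}<G\}$ by Lemma \ref{whatlG}, so the penalization is inactive and $Lw_{\epsilon}=-R^{-n}$ there, while $LW\geq -R^{-n}$; hence $L(W-w_{\epsilon})\geq 0$ in $\Omega$ with $W-w_{\epsilon}=0$ on $\partial\Omega$, and the weak maximum principle forces $\Omega=\emptyset$. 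You never identify the Euler--Lagrange equation of $w_{\epsilon}$ on $\{w_{\epsilon}<G\}$; instead you use only minimality (of $w_{\epsilon}$ for $J_{\epsilon}$, or of $w_0=\tilde w$ for $J$ via Lemmas \ref{lsG} and \ref{widewisw0}) against the competitor $\max\{\,\cdot\,,W\}$, combined with the weak form of the equation for $W$ tested with $\phi:=(W-\cdot)^{+}$. That route does close: with $S=\{W>w\}$ and $\phi=W-w$ on $S$ one has $J(W,S)-J(w,S)=\int_S\bigl(2a^{ij}D_iW D_j\phi-a^{ij}D_i\phi D_j\phi\bigr)-2R^{-n}\int_S\phi$, and since $\int a^{ij}D_iWD_j\phi=\int\chi_{\{W<G\}}R^{-n}\phi\leq R^{-n}\int_S\phi$, the linear terms cancel and the difference is $\leq-\lambda\int_S|D\phi|^2$, contradicting minimality (and uniqueness) unless $\phi\equiv 0$; in the $w_{\epsilon}$ version one also uses $J_{\epsilon}\geq J$ and $J_{\epsilon}(W,\cdot)=J(W,\cdot)$, as you note. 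One caution: ``the same max/min splitting'' as in Lemma \ref{w0lwep} is not enough by itself, because there both truncations are compared against \emph{minimizers}, whereas here $W$ is not yet known to minimize anything; you must still invoke $W$'s PDE as in your first computation. What your approach buys is that it avoids the pointwise maximum-principle step and, if you take $\phi=(W-w_{\epsilon})^{+}\in W_0^{1,2}(B_M)$, it needs no continuity at all (your worry on that point is unnecessary); what the paper's approach buys is brevity.

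One sign slip to fix: with the paper's convention \refeqn{Ldef2}\!\!, testing $LW=-\chi_{\{W<G\}}R^{-n}$ with $\phi\geq 0$ gives $\int_S a^{ij}D_iWD_j\phi=+\int_S\chi_{\{W<G\}}R^{-n}\phi\ \geq 0$, not $\leq-\int_S R^{-n}\chi_{\{W<G\}}\phi$ as you wrote. The correct upper bound $R^{-n}\int_S\phi$ is exactly what is needed for the cancellation above, so the argument survives, but the displayed inequality as stated is false.
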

\begin{proof} It suffices to show $w_{\epsilon} \geq W,$ for any $\epsilon.$
Suppose for the sake of obtaining a contradiction that there exists an $\epsilon > 0$ and a point $x_0$ where
$w_{\epsilon} - W$ has a negative local minimum.
So $w_{\epsilon}(x_0) < W(x_0) \leq G(x_0).$
Let $\Omega:= \{ w_{\epsilon} < W \}$ and observe that $w_\epsilon = W$ on $\partial\Omega.$  Then $x_0$ is
an interior point of $\Omega$ and $$L(w_\epsilon)= - R^{-n} \ \ \text{in}\ \ \Omega.$$
However
\begin{equation}
     L(W - w_\epsilon)\geq -R^{-n}+R^{-n}=0 \ \ \text{in}\ \ \Omega.
\end{equation}
By the weak maximum principle, the minimum can not be attained at an interior point, and so we have a contradiction.
\end{proof}

%%%%%%%%%%%%%%%%%%%%%%%%%%%%%%%%%%%%%%%%%%%%%%%%%%%%

\begin{lemma} \label{pdeminimizer}
$w_0 = \tilde{w} = W,$ and so $w_0$ and $\tilde{w}$ are continuous.
\end{lemma}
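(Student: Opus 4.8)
The plan is to establish the one inequality that is still missing, namely $w_0 \le W$, and then combine it with what is already known. Lemma~\ref{widewisw0} gives $\widetilde{w} = w_0$ in $W^{1,2}(B_M)$, and Lemma~\ref{wtilgwhat} gives $\widetilde{w} \ge W$; together with $w_0 \le W$ these force $W \le \widetilde{w} = w_0 \le W$, so $w_0 = \widetilde{w} = W$ almost everywhere. The right-hand side of the problem~\eqref{eq:whatdef} solved by $W$ is a bounded function, so $W$ is locally H\"older continuous by the De Giorgi--Nash--Moser theorem and continuous up to $\partial B_M$ with its continuous data $G_{sm}$; hence $w_0$ and $\widetilde{w}$ agree a.e.\ with the continuous function $W$ and are continuous as claimed.

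To prove $w_0 \le W$ I would argue by comparison, using that $w_0$ minimizes $J(\cdot, B_M)$ over the convex set $K_{M,G}$. First, the admissibility bookkeeping: by Lemma~\ref{whatlG} we have $W \le G$ in $B_M$, and $W = G_{sm}$ on $\partial B_M$, so $W \in K_{M,G}$; moreover $w_0 - W = (w_0 - G_{sm}) - (W - G_{sm}) \in W^{1,2}_0(B_M)$, hence $(w_0 - W)^+ \in W^{1,2}_0(B_M)$, and $\min\{w_0, W\} = w_0 - (w_0 - W)^+$ again lies in $K_{M,G}$, being $\le W \le G$ and differing from $w_0$ by a function in $W^{1,2}_0(B_M)$. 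Writing the first variation of $J$ at the minimizer $w_0$ in the admissible direction $\min\{w_0, W\} - w_0 = -(w_0 - W)^+$ then yields
\begin{equation}
   \int_{B_M} a^{ij} D_i w_0 \, D_j (w_0 - W)^+ \ \le\ \int_{B_M} R^{-n} (w_0 - W)^+ \;.
\label{eq:pdemin-var-w0}
\end{equation}

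Next I would bring in the equation satisfied by $W$. Set $S := \{ w_0 > W \}$, the open set on which $(w_0 - W)^+$ is positive. Since $w_0 \le G$ everywhere, on $S$ we have $W < w_0 \le G$, so $S \subseteq \{ W < G \}$. Testing $L(W) = -\chisub{\{W < G\}} R^{-n}$ against $(w_0 - W)^+ \in W^{1,2}_0(B_M)$, and using this inclusion to drop the characteristic factor on the support of the test function, gives
\begin{equation}
   \int_{B_M} a^{ij} D_i W \, D_j (w_0 - W)^+ \ =\ \int_{B_M} R^{-n} (w_0 - W)^+ \;.
\label{eq:pdemin-var-W}
\end{equation}
Subtracting \eqref{eq:pdemin-var-W} from \eqref{eq:pdemin-var-w0}, and using the standard identity $\nabla (w_0 - W)^+ = \chisub{S} \, \nabla(w_0 - W)$ a.e., collapses the right-hand sides and leaves
$$\int_{B_M} a^{ij} D_i (w_0 - W)^+ \, D_j (w_0 - W)^+ \ \le\ 0 \;.$$
Uniform ellipticity forces $\nabla(w_0 - W)^+ \equiv 0$, and since $(w_0 - W)^+ \in W^{1,2}_0(B_M)$ this gives $(w_0 - W)^+ \equiv 0$, i.e.\ $w_0 \le W$ a.e., which is what was needed.

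The only genuine point in the argument is the inclusion $S \subseteq \{ W < G \}$: it is what makes the characteristic function in the equation for $W$ harmless when paired with $(w_0 - W)^+$, and it is a one-line consequence of $w_0 \le G$. Everything else is routine, if slightly fussy, bookkeeping to keep the competitors inside $K_{M,G}$ and the test function inside $W^{1,2}_0(B_M)$. (Alternatively, one could observe that $w_0$ itself solves $L(w_0) = -\chisub{\{ w_0 < G \}} R^{-n}$ with data $G_{sm}$ --- the complementarity relation of the obstacle problem defining $w_0$, since $L(G) = 0$ on the contact set --- so that $w_0$ and $W$ solve the same problem and one may invoke the uniqueness asserted in Corollary~\ref{RTG}; the direct comparison above avoids relying on that statement.)
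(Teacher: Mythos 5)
Your argument is correct, and for the key step it takes a genuinely different route from the paper. The paper handles the contact set $\{W = G\}$ by squeezing ($W \le \tilde w \le G$, Lemmas~\ref{wtilgwhat} and~\ref{lsG}), and on $\Omega_1 = \{W < G\}$ it derives $L(w_0) \ge -R^{-n}$ from the first variation of $J$, concludes $L(\tilde w - W) \ge 0$ there, and applies the weak maximum principle using $\tilde w = W = G$ on $\partial \Omega_1$; together with $\tilde w \ge W$ this forces equality. You instead prove the missing inequality $w_0 \le W$ globally by an energy (Stampacchia-type truncation) comparison: insert the competitor $\min\{w_0, W\} \in K_{M,G}$ (admissible by Lemma~\ref{whatlG} and the common boundary data $G_{sm}$) into the first-variation inequality for $w_0$, test the weak equation\refeqn{whatdef}for $W$ with $(w_0 - W)^+ \in W^{1,2}_0(B_M)$ — the observation $\{w_0 > W\} \subset \{W < G\}$, which follows from $w_0 \le G$, is exactly what lets you drop the characteristic function — and subtract to get $\int a^{ij} D_i (w_0-W)^+ D_j (w_0-W)^+ \le 0$, hence $w_0 \le W$; combining with Lemmas~\ref{widewisw0} and~\ref{wtilgwhat} gives $w_0 = \tilde w = W$, and continuity follows since $W$ is continuous. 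Your version buys something technically: it works entirely with $W^{1,2}_0$ test functions and avoids invoking the weak maximum principle on the a priori irregular set $\{W < G\}$ with boundary values understood only in the Sobolev sense, whereas the paper's version is shorter and displays the complementarity structure ($L(w_0) \ge -R^{-n}$ with equality off the contact set) more directly. Your closing remark is also well judged: the alternative of citing uniqueness for the PDE formulation (Corollary~\ref{RTG}) is less secure than your direct comparison, since the uniqueness assertions established earlier concern the approximating semilinear problems rather than the limit problem itself.
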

\begin{proof} We already showed that $w_0 = \tilde{w}$ in lemma\refthm{widewisw0}\!\!.
By lemma\refthm{wtilgwhat}\!\!, in the set where $W=G$, we have
\begin{equation} W=\tilde{w}=G.
\end{equation}
Let $\Omega_1:=\{ W < G \}$, it suffices to show $\tilde{w} = W$ in $\Omega_1$.
By definition of $W$, $L(W)=-R^{-n}$ in $\Omega_1$.

\noindent Using the fact that $w_0$ is the minimizer, the standard argument in the calculus of variations
leads to $L(w_0)\geq -R^{-n}.$
Therefore
\begin{equation} L(\tilde{w}-W)=L(w_0-W)\geq 0 \ \ \text{in} \ \ B_M.
\end{equation}
Notice that on $\partial \Omega_1$, $W=\tilde{w}=G$. By weak maximum principle, we have
\begin{equation} \tilde{w}=W \ \ \text{in} \ \ \Omega_1.
\end{equation}

\end{proof}

%%%%%%%%%%%%%%%%%%%%%%%%%%%%%%%%%%%%%%%%%%%%%%%%%%%%

\noindent
Using the last lemma along with our definition of $W$ (see Equation\refeqn{whatdef}\!\!) we can now state the following theorem.

%%%%%%%%%%%%%%%%%%%%%%%%%%%%%%%%%%%%%%%%%%%%%%%%%%%%

\begin{theorem}[The PDE satisfied by $w_0$]   \label{PDEsw0}
The minimizing function $w_0$ satisfies the following boundary value problem:
\BVPhao{L(w_0) = -\chisub{ \{ w_0 < G \} }R^{-n}}{w_0 = G_{sm}}{PDEw0}{B_M}
\end{theorem}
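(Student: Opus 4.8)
The plan is to read the result off directly from Lemma~\refthm{pdeminimizer} together with the definition of $W$ in Equation~\refeqn{whatdef}\!\!. Recall that the content of Lemma~\refthm{pdeminimizer} is the identity $w_0 = \tilde{w} = W$ as functions on $B_M$; in particular $w_0$ and $W$ agree pointwise, and both are continuous. Since $w_0 \equiv W$, the two level sets $\{ w_0 < G \}$ and $\{ W < G \}$ coincide, and hence so do the characteristic functions $\chisub{ \{ w_0 < G \} }$ and $\chisub{ \{ W < G \} }$.

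Now $W$ was constructed — its existence being guaranteed by combining Theorem~\refthm{ExistPWG} with Corollary~\refthm{RTG} — precisely so as to solve $L(W) = -\chisub{ \{ W < G \} }R^{-n}$ in $B_M$ with $W = G_{sm}$ on $\partial B_M$. Substituting $w_0$ for $W$ via the previous paragraph yields $L(w_0) = -\chisub{ \{ w_0 < G \} }R^{-n}$ in $B_M$. For the boundary condition I would note that $w_0 \in K_{M,G} \subset H_{M,G}$, so by the very definition of $H_{M,G}$ we have $w_0 - G_{sm} \in W^{1,2}_0(B_M)$, which is exactly the assertion that $w_0 = G_{sm}$ on $\partial B_M$ in the trace sense. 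Together, these two facts constitute the boundary value problem in Equation~\refeqn{PDEw0}\!\!.

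The one point that merits a moment's care — and is therefore the \emph{only} real obstacle, mild as it is — is the identification of the right-hand sides, i.e.\ that one may legitimately replace $W$ by $w_0$ inside the characteristic function defining the source term. This is where the continuity assertion in Lemma~\refthm{pdeminimizer} does its work: because $w_0$ and $W$ are equal and continuous, the sets $\{ w_0 < G \}$ and $\{ W < G \}$ are literally identical as open subsets of $B_M$ (not merely equal up to a null set), so there is no ambiguity whatsoever in the right-hand side of the PDE. Everything else in the proof is bookkeeping with the definitions of $K_{M,G}$, $H_{M,G}$, and $G_{sm}$.
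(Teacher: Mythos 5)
Your proposal is correct and follows exactly the route the paper takes: the paper states the theorem as an immediate consequence of Lemma\refthm{pdeminimizer} ($w_0 = \tilde{w} = W$) combined with the defining boundary value problem\refeqn{whatdef}for $W$. Your added remarks about the coincidence of the sets $\{w_0 < G\}$ and $\{W < G\}$ and the trace-sense boundary condition via $w_0 \in K_{M,G} \subset H_{M,G}$ are just careful spellings-out of the same argument.
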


%%%%%%%%%%%%%%%%%%%%%%%%%%%%%%%%%%%%%%%%%%%%%%%%%%%%
%%%%%%%%%%%%%%%%%%%%%%%%%%%%%%%%%%%%%%%%%%%%%%%%%%%%
%%%%%%%%%%%%%%%%%%%%%%%%%%%%%%%%%%%%%%%%%%%%%%%%%%%%

\newsec{Minimizers Become Independent of $M$}{MBIM}

At this point we are no longer interested in the functions from the last section, with the exception of $w_0.$
On the other hand, we now care about the dependence of $w_0$ on the radius of the ball on which it
is a minimizer.  Accordingly, we reintroduce the dependence of $w_0$ on $M,$ and so we will let $w_M$ be the
minimizer of $J(w,B_M)$ within $K(M,G),$ and consider the behavior as $M \rightarrow \infty.$  As we observed in
Remark\refthm{RnNoGood}\!\!, it is not possible to start by minimizing our functional on all of $\R^n,$ so we
have to get the key function, ``$V_R$,'' mentioned by Caffarelli on page 9 of \cite{C} by taking a limit over
increasing sets.  Note that by Theorem\refthm{PDEsw0}we know that $w_M$ satisfies
\BVPhao{L(w_M) = -\chisub{ \{G > w_M\} }R^{-n}}{w_M = G_{sm}}{PDEwM}{B_M}
The theorem that we wish to prove in this section is the following:
\begin{theorem}[Independence from $M$]   \label{indepM}
There exists $M \in \N$ such that if $M_j > M$ for $j = 1,2,$ then
$$w_{M_1} \equiv w_{M_2} \ \ \ \text{within} \ \ \ B_M$$
and
$$w_{M_1} \equiv w_{M_2} \equiv G \ \ \ \text{within} \ \ \ B_{M+1} \setminus B_M \;.$$
Furthermore, we can choose $M$ such that $M < C(n, \lambda, \Lambda) \cdot R.$
\end{theorem}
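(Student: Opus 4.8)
The plan is to show that for $M$ large enough (depending only on $n,\lambda,\Lambda$ and proportional to $R$), the free boundary $\partial\{w_M < G\}$ stays well inside $B_M$, and moreover $w_M \equiv G$ on a whole annulus $B_{M+1}\setminus B_M$; once this localization is established, a standard comparison/uniqueness argument forces $w_{M_1}\equiv w_{M_2}$ on the common region. The starting point is the PDE $L(w_M) = -\chisub{\{G>w_M\}}R^{-n}$ with boundary data $G_{sm}$ on $\partial B_M$ from Theorem\refthm{PDEsw0}\!\!. First I would estimate how far from the origin the set $\{w_M < G\}$ can reach: since $w_M \leq G$ always and $G(x,0)\to 0$ as $|x|\to\infty$ (with quantitative decay from \cite{LSW}), while the source term $-R^{-n}$ is pushing $w_M$ down at a fixed rate, the barrier function $\ell(x) = C_{sm,|x|} + R^{-n}(\rho^2 - |x|^2)/(4n)$-type construction used in Lemma\refthm{NotBigatZ}can be run in reverse: on $\{w_M < G\}$ we have $L(w_M)=-R^{-n}$, and comparing $G - w_M$ (which is $L$-superharmonic with a definite lower bound on $L(G-w_M)$) against an explicit quadratic barrier shows $G - w_M$ must already be zero outside a ball of radius $\sim C(n,\lambda,\Lambda)R$. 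The point is that $G$ is too small far out to "absorb" the linear-in-$R^{-n}$ growth that $-\chisub{\{G>w_M\}}R^{-n}$ would generate, so the noncoincidence set is trapped.

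Concretely, I would fix $M_0 = C(n,\lambda,\Lambda)\cdot R$ with $C$ chosen so that $C_{big,M_0} = \max_{\partial B_{M_0}} G(x,0)$ is smaller than the minimum of the quadratic defect that a region of noncoincidence of radius $M_0$ would force — this is where the De Giorgi–Nash–Moser lower/upper bounds on $G$ from \cite{LSW} enter, giving $c|x|^{2-n}\leq G(x,0)\leq C|x|^{2-n}$ for $n\geq 3$ (and the logarithmic analogue for $n=2$), so that $C_{big,M}\to 0$ with a known rate. Then for any $M_1, M_2 > M_0$: on $B_{M_0}$ both $w_{M_1}$ and $w_{M_2}$ solve $L(w)=-\chisub{\{G>w\}}R^{-n}$, and by the localization both coincide with $G$ near $\partial B_{M_0}$ (indeed on $B_{M_0+1}\setminus B_{M_0}$, after possibly enlarging $M_0$ slightly to $M := M_0 + 1$), so they have the same boundary data on $\partial B_M$ and solve the same obstacle problem there; uniqueness (Corollary\refthm{RTG}combined with the uniqueness in Theorem\refthm{exun}via the equivalence established in section four) gives $w_{M_1}\equiv w_{M_2}$ in $B_M$. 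The statement that $w_{M_j}\equiv G$ in $B_{M+1}\setminus B_M$ is then just the localization of the noncoincidence set applied once more.

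The main obstacle I anticipate is making the barrier comparison genuinely quantitative in a way that the constant comes out as $C(n,\lambda,\Lambda)\cdot R$ and nothing worse — in particular, getting clean two-sided bounds on $G(x,0)$ and on $C_{sm,r}, C_{big,r}$ with explicit dependence on the ellipticity constants, and then checking that the barrier $\ell$ (which is built from the \emph{Laplacian}'s fundamental solution, as in Lemma\refthm{NotBigatZ}) can be compared to $G-w_M$ using only $\lambda,\Lambda$ via the maximum-principle-type estimate "$L(u)\leq L(v), u\leq v$ on the boundary $\Rightarrow u\leq Kv$" from \cite{LSW} (Corollary 7.1 there). A secondary subtlety is the boundary-data matching: one must verify that once $M > M_0$, the value $G_{sm}$ used as boundary data on $\partial B_M$ does not interfere, i.e. that enlarging $M$ past $M_0$ genuinely leaves $w_M$ unchanged on $B_{M_0}$ rather than merely comparable; this is handled by noting $w_M \equiv G$ on the annulus, so restricting $w_{M_2}$ to $B_{M_1}$ produces an admissible competitor for the $B_{M_1}$ problem with the correct boundary values, and conversely.
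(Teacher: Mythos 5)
Your reduction of the theorem is the same as the paper's: trap the noncoincidence set $\{w_M \ne G\}$ in a ball $B_{CR}$ with $C = C(n,\lambda,\Lambda)$ (this is Theorem\refthm{FBisbdd}\!\!), then note that for $M_1, M_2$ beyond that radius both functions solve the same problem with boundary data $G$ and conclude by uniqueness of the minimizer; that second step of yours is fine. The genuine gap is in how you propose to prove the localization. Running the barrier of Lemma\refthm{NotBigatZ}``in reverse'' does not work as described: the identity $L(G - w_M) = R^{-n}$ holds only on the noncoincidence set $\{w_M < G\}$, whose geometry is exactly what is unknown, so at a putative far-out free boundary point there is no full ball on which $L(G - w_M)$ has a positive lower bound, and hence no domain on which to run a quadratic comparison (the LSW Corollary 7.1 device needs the equation with a definite right-hand side on the whole comparison domain). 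A maximum-principle comparison on $\{w_M < G\}$ itself only yields \emph{upper} bounds for $G - w_M$ (it is a subsolution there, vanishing on its free boundary), and an a priori thin tentacle of noncoincidence reaching far out would only force $G - w_M$ to be of the order of its width squared times $R^{-n}$, which does not contradict the smallness of $G$; so your assertion that $G$ ``cannot absorb the quadratic defect'' is not justified by any one-shot barrier. The classical fix, comparing with $c|x - x_0|^2$ directly, is also unavailable because $L$ applied to a quadratic is meaningless for merely bounded measurable $a^{ij}$ --- this is precisely the difficulty the introduction flags, and the reason the paper builds the nondegeneracy machinery before this point.

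What the paper actually does is apply that machinery: if $x_0$ is a free boundary point of $W := G - w_M$ with $K = |x_0|/3$, then Theorem\refthm{NonDeg}(applied to $W$, which solves the obstacle-type equation with $f = R^{-n}$, after the quadratic-growth results of Lemmas\refthm{growth}\!\!,\refthm{posball}and the polygonal-curve lemma) gives $\sup_{B_K(x_0)} W \geq c R^{-n} K^2$, while $W \leq G \leq C K^{2-n}$ on $B_K(x_0)$, forcing $K \leq CR$; this is the whole proof of Theorem\refthm{FBisbdd}\!\!, and Theorem\refthm{indepM}follows as you describe. Your proposal never invokes Theorem\refthm{NonDeg}\!\!; if you replace your barrier paragraph with this application of nondegeneracy, the remainder of your argument (boundary-data matching, admissible competitors, uniqueness) goes through essentially as in the paper.
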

\noindent
This Theorem is an immediate consequence of the following Theorem:
\begin{theorem}[Boundedness of the Noncontact Set]   \label{FBisbdd}
There exists a constant $C = C(n, \lambda, \Lambda)$ such that for any $M \in \R$
\begin{equation}
     \{ w_M \ne G \} \subset B_{CR} \;.
\label{eq:FBbdd}
\end{equation}
\end{theorem}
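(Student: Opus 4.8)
The plan is to show that the noncontact set $\{w_M \ne G\} = \{w_M < G\}$ (recall $w_M \le G$ always) must be contained in a fixed ball $B_{CR}$ by exhibiting an explicit supersolution barrier that forces $w_M = G$ outside that ball. The governing fact, from Theorem~\ref{PDEsw0}, is that $L(w_M) = -R^{-n}\chisub{\{w_M < G\}}$, so inside the noncontact set $L(w_M) = -R^{-n} < 0$, i.e.\ $w_M$ is a strict supersolution there; meanwhile $G$ is $L$-harmonic away from the origin. The idea is that the ``source'' driving $w_M$ below $G$ is the Dirac-type singularity of $G$ at $0$ (smeared through the minimization), and the total mass $R^{-n}|\{w_M < G\}|$ that $L w_M$ must carry is controlled, while $G$ decays like the fundamental solution; past a certain radius $G$ is simply too small to exceed a competitor built from $w_M$.

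The key steps, in order, would be: (1) Record that $u := G - w_M \ge 0$ satisfies $L u = R^{-n}\chisub{\{u>0\}}$ weakly in $B_M \setminus \{0\}$, with $u = 0$ on $\partial B_M$, and that $u$ is continuous (Lemma~\ref{pdeminimizer}) and bounded near $\partial B_M$. (2) Use the Littman--Stampacchia--Weinberger bounds on the Green's function: there are constants depending only on $n,\lambda,\Lambda$ with $G(x,0) \le c_2 |x|^{2-n}$ for $n \ge 3$ (and the logarithmic analogue for $n=2$), so for $|x| \ge \rho$ we have $G(x,0) \le c_2 \rho^{2-n}$. (3) Build a comparison function: on the annular/exterior region $A_\rho := \{|x| > \rho\} \cap B_M$, compare $u$ with the $L$-solution $\psi$ of $L\psi = R^{-n}$ in $A_\rho$, $\psi = c_2\rho^{2-n}$ on $\{|x| = \rho\}$, $\psi = 0$ on $\partial B_M$; using the universal upper bound on solutions of $L\psi = R^{-n}$ in terms of the flat barrier $\frac{R^{-n}}{2n}(\,\cdot\,)$ from Corollary~7.1 of \cite{LSW} (exactly as used in Lemma~\ref{NotBigatZ} and Lemma~\ref{growth}), one gets $\psi \le c_2\rho^{2-n} + C R^{-n} M^2$ — but more usefully, one runs the comparison the other direction on a region where the right-hand side of the equation for $u$ is \emph{at most} that of $\psi$. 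The cleanest route: choose $\rho = C(n,\lambda,\Lambda) R$ so that $c_2 \rho^{2-n}$ is strictly smaller than the minimal possible ``deficit'' a genuine noncontact point must produce; then nondegeneracy (Theorem~\ref{NonDeg} applied to $G - W = u$ via Corollary~\ref{RTG}) says that if $x_0 \in \partial\{u > 0\}$ then $\sup_{B_r(x_0)} u \ge C r^2$ for all admissible $r$, while optimal regularity forces $u(x) \le 4\tilde C|x - x_0|^2$ nearby — and the value $u$ can attain is also bounded above by $G$ itself, hence by $c_2\rho^{2-n}$, which is $\ll Cr^2$ once $r$ is a fixed fraction of $\rho$ and $\rho \gtrsim R$. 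That contradiction pins the free boundary of $u$, hence all of $\{u > 0\}$, inside $B_{CR}$.

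I expect the main obstacle to be step (3): making the barrier comparison genuinely $M$-independent. The naive barrier picks up a $C R^{-n} M^2$ term from solving $L\psi = R^{-n}$ on a huge ball, which ruins uniformity in $M$; the fix is to not let the source term act over a large region — instead compare $u$ against the pure $L$-harmonic function matching $u$'s boundary data on $\partial(\{|x|>\rho\}\cap B_M)$, and absorb the $R^{-n}\chisub{\{u>0\}}$ term by noting that $\{u>0\}$ past radius $\rho$ is precisely what we are trying to show is empty — so one should set up the argument as: assume $\{u>0\}$ meets $\{|x| > CR\}$, extract a free boundary point $x_0$ there (using continuity and that $u=0$ on $\partial B_M$), and derive the quadratic-growth-versus-Green's-function-decay contradiction purely locally in a ball $B_{cR}(x_0) \subset \{|x| > \rho/2\}$, where the relevant bound $G \le c_2(\rho/2)^{2-n}$ has no $M$ in it. The one real computation is choosing the numerical constant $C$ so that $4\tilde C \cdot (cR)^2$-type upper bounds and $C_{\mathrm{nondeg}} (cR)^2$-type lower bounds are incompatible with $u \le c_2 (CR)^{2-n}$; this is where all the named constants $n,\lambda,\Lambda,\bar\lambda,\bar\Lambda$ enter, and it closes the proof. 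Once Theorem~\ref{FBisbdd} is in hand, Theorem~\ref{indepM} follows because outside $B_{CR}$ both $w_{M_1}$ and $w_{M_2}$ equal $G$, so they solve the same boundary value problem \refeqn{PDEwM} on $B_M$ for any $M$ between $CR$ and $\min(M_1,M_2)$, and uniqueness (Corollary~\ref{RTG}) finishes it.
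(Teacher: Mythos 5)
Your final route---extract a free boundary point $x_0$ of $u = G - w_M$ outside $B_{CR}$, apply the nondegeneracy Theorem \ref{NonDeg} (through Corollary \ref{RTG}) on a ball of radius comparable to $|x_0|$, and contradict $u \le G \le c_2 |x|^{2-n}$ there---is exactly the paper's proof, and it is correct; the preliminary barrier $\psi$ with its $R^{-n}M^2$ term is unnecessary, as you yourself conclude. The only point to state carefully is that the nondegeneracy constant for $Lu = R^{-n}\chisub{\{u>0\}}$ carries the factor $R^{-n}$ (i.e.\ $\sup_{B_r(x_0)} u \ge c\,R^{-n} r^2$), which is precisely what turns the comparison with $c_2|x_0|^{2-n}$ into the bound $|x_0| \le C(n,\lambda,\Lambda)\,R$ with the right scaling in $R$.
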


%%%%%%%%%%%%%%%%%%%%%%%%%%%%%%%%%%%%%%%%%%%%%%%%%%%%

\begin{proof}
First of all, if $M \leq CR,$ then there is nothing to prove.
For all $M > 1$ the function $W := G - w_M$ will satisfy:
\begin{equation}
    L(W) = R^{-n}\chisub{ \{ W > 0 \} },  \ \text{and} \ 0 \leq W \leq G \ \text{in} \ B_1^{c}.
\end{equation}
If the conclusion to the theorem is false, then
there exists a large $M$ and a large $C$ such that
$$x_0 \in FB(W) \cap \{ B_{M/2} \setminus B_{CR} \} \;.$$
Let $K := |x_0|/3.$
By Theorem\refthm{NonDeg}\!\!, we can then say that
\begin{equation}
    \sup_{B_{K}(x_0)} W(x) \geq C R^{-n} K^2 > C K^{2-n} \geq \sup_{B_{K}(x_0)} G(x)
\label{eq:nondeghere}
\end{equation}
which gives us a contradiction since $W \leq G$ everywhere.  Now note that in order to avoid the contradiction, we must have
$$C R^{-n} K^2 \leq C K^{2-n} \;,$$
and this leads to
$$K \leq C R $$
which means that $|x_0|$ must be less than $CR.$  In other words, $FB(W) \subset B_{CR}.$
\end{proof}

%%%%%%%%%%%%%%%%%%%%%%%%%%%%%%%%%%%%%%%%%%%%%%%%%%%%
\noindent
At this point, we already know that when $M$ is sufficiently large, the set $\{G > w_M\}$ is contained in $B_{CR}$.
Then by uniqueness, the set will stay the same for any bigger $M$.
Therefore, it makes sense to define $w_R$ to be the solution of
\begin{equation}
    Lw = - R^{-n}\chisub{ \{ w < G \} } \ \ \ \text{in} \ \R^n
\label{eq:BasicEqn}
\end{equation}
among functions $w \leq G$ with $w = G$ at infinity.  Note that we can now obtain the function, ``$V_R$,'' that
Caffarelli uses on page 9 of \cite{C}.  The relationship is simply:
\begin{equation}
     V_R = w_R - G \;.
\label{eq:CaffVR}
\end{equation}

%%%%%%%%%%%%%%%%%%%%%%%%%%%%%%%%%%%%%%%%%%%%%%%%%%%%
%%%%%%%%%%%%%%%%%%%%%%%%%%%%%%%%%%%%%%%%%%%%%%%%%%%%
%%%%%%%%%%%%%%%%%%%%%%%%%%%%%%%%%%%%%%%%%%%%%%%%%%%%

\newsec{The Mean Value Theorem}{MVTfinal}
Finally, we can turn to the Mean Value Theorem.

\begin{lemma}[Ordering of Sets]  \label{OoS}
For any $R < S$, we have
\begin{equation}
   \{ w_R < G \} \subset \{ w_S < G \}.
\label{eq:Inclusion}
\end{equation}
\end{lemma}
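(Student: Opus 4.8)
The plan is to reduce the stated inclusion to the pointwise comparison
\begin{equation*}
   R < S \ \Longrightarrow \ w_R \ge w_S \ \ \text{in} \ \R^n .
\end{equation*}
Granting this, the inclusion is immediate: if $x \in \{ w_R < G \}$, then since $w_S \le G$ always and $w_S \le w_R$, we get $w_S(x) \le w_R(x) < G(x)$, so $x \in \{ w_S < G \}$. (Note that this is the correct direction precisely because the smaller radius $R$ gives the \emph{larger} source $R^{-n}$, which forces $w_R$ to sit closer to $G$ and hence to have the \emph{smaller} noncontact set.)

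To prove $w_R \ge w_S$ I would argue by contradiction, in the style of Lemma\refthm{ordering}and Lemma\refthm{wtilgwhat}\!\!. Suppose $\Omega_1 := \{ w_R < w_S \}$ is nonempty. On $\Omega_1$ we have $w_R < w_S \le G$, hence $\Omega_1 \subset \{ w_R < G \}$, which is bounded by Theorem\refthm{FBisbdd}\!\!. Both $w_R$ and $w_S$ are continuous, by Lemma\refthm{pdeminimizer}together with the $M$-independence established in Section~\ref{sec-MBIM}, so $\Omega_1$ is a bounded open set and $w_R = w_S$ on $\partial \Omega_1$. Now compute $L(w_S - w_R)$ on $\Omega_1$: since $w_R < G$ throughout $\Omega_1$, the defining equation for $w_R$ gives $L w_R = -R^{-n}$ there, while $L w_S = -S^{-n}\chisub{\{ w_S < G \}} \ge -S^{-n}$ everywhere because $0 \le \chisub{\{ w_S < G \}} \le 1$ and $S^{-n} > 0$. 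Therefore, in $\Omega_1$,
\begin{equation*}
   L(w_S - w_R) \ge -S^{-n} + R^{-n} = R^{-n} - S^{-n} > 0 ,
\end{equation*}
the last strict inequality because $0 < R < S$. Thus $w_S - w_R$ is a weak subsolution of $L$ in the bounded open set $\Omega_1$ and vanishes on $\partial \Omega_1$; by the weak maximum principle $w_S - w_R \le 0$ throughout $\Omega_1$, contradicting the definition of $\Omega_1$. Hence $\Omega_1 = \emptyset$, i.e. $w_R \ge w_S$, and the inclusion follows.

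I do not expect a serious obstacle here; the one step needing a little care is the weak maximum principle, where one checks that $(w_S - w_R)^+$ is a legitimate test function supported in $\overline{\Omega_1}$ and that the subsolution inequality holds weakly — both routine given the $W^{1,2}$ regularity and continuity established earlier and the boundedness of $\Omega_1$. A completely equivalent route, if one prefers to avoid even that, is a truncation argument carried out at a finite (large) $M$: with $v_1 := \min(w_R, w_S)$ and $v_2 := \max(w_R, w_S)$, both are admissible competitors since $v_1, v_2 \le G$ and both share the boundary trace $G_{sm}$ on $\partial B_M$; using $\int a^{ij}D_i v_1 D_j v_1 + \int a^{ij}D_i v_2 D_j v_2 = \int a^{ij}D_i w_R D_j w_R + \int a^{ij}D_i w_S D_j w_S$ and $v_1 + v_2 = w_R + w_S$ pointwise, the difference of the four relevant energies (for $v_2$ with source $R^{-n}$, $v_1$ with source $S^{-n}$, minus those for $w_R$ and $w_S$) collapses to the single cross term $2(S^{-n} - R^{-n})\int_{\{ w_S > w_R \}} (w_S - w_R) \le 0$; minimality of $w_R$ together with uniqueness then forces $v_2 = w_R$, i.e. $w_R \ge w_S$.
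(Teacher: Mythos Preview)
Your proof is correct. Your primary route---the weak maximum principle applied to $w_S - w_R$ on the bounded open set $\Omega_1 = \{w_R < w_S\}$, using $L(w_S - w_R) \ge R^{-n} - S^{-n} > 0$ there---is a genuinely different argument from the paper's. The paper instead takes exactly the variational truncation route you outline as your alternative: working in a large ball $B_M$ containing both noncontact sets, it uses $\min(w_R,w_S)$ as a competitor for the $S^{-n}$-problem and $\max(w_R,w_S)$ as a competitor for the $R^{-n}$-problem, arriving at the contradiction
\[
   \int_{\Omega_1} a^{ij} D_i w_{S}D_j w_{S} - 2w_{S}R^{-n}
   < \int_{\Omega_1} a^{ij} D_i w_{R}D_j w_{R} - 2w_{R}R^{-n}
\]
with the uniqueness of the minimizer $w_R$. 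Your PDE approach is a little more direct and mirrors Lemma\refthm{wtilgwhat}rather than Lemma\refthm{ordering}\!\!; it exploits the equations \emph{after} Theorem\refthm{PDEsw0}has identified them, and has the mild advantage of not needing to revisit the variational characterization. The paper's approach stays entirely within the convex-minimization framework of Section~4 and never touches the equation at points where $w_S = G$, which is stylistically consistent with how the earlier comparison lemmas were proved. Either way, the intermediate conclusion is the same pointwise inequality $w_R \ge w_S$, and both arguments are complete.
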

\begin{proof} Let $B_M$ be a ball that contains both $\{w_R < G \}$ and $\{ w_S < G \}.$
Then by the discussion in Section 2, we know $w_R$ minimizes
$$\int_{B_M} a^{ij} D_i wD_j w - 2wR^{-n}$$
and $w_S$ minimizes
$$\int_{B_M} a^{ij} D_i wD_j w - 2wS^{-n}.$$
Let $\Omega_1 \subset \subset B_M$ be the set $\{w_S > w_R\}.$ Then it follows that
\begin{equation}
       \int_{\Omega_1} a^{ij} D_i w_{S}D_j w_{S} - 2w_{S}S^{-n} \leq
       \int_{\Omega_1} a^{ij} D_i w_{R}D_j w_{R} - 2w_{R}S^{-n},
\label{eq:Ineq1}
\end{equation}
which implies
\begin{align*}
        \int_{\Omega_1} a^{ij} D_i w_{S}D_j w_{S} \leq
     & \int_{\Omega_1} a^{ij} D_i w_{R}D_j w_{R} + 2S^{-n}\int_{\Omega_1} (w_S-w_R) \\
 < & \int_{\Omega_1} a^{ij} D_i w_{R}D_j w_{R} + 2R^{-n}\int_{\Omega_1} (w_S-w_R).
\end{align*}
Therefore, since $w_S \equiv w_R$ on $\partial \Omega_1,$ and
\begin{equation}
        \int_{\Omega_1} a^{ij} D_i w_{S}D_j w_{S} - 2w_{S}R^{-n}
   < \int_{\Omega_1} a^{ij} D_i w_{R}D_j w_{R} - 2w_{R}R^{-n},
\label{eq:Ineq2}
\end{equation}
we contradict the fact that $w_R$ is the minimizer of $\int a^{ij} D_i wD_j w - 2wR^{-n}$.
\end{proof}

%%%%%%%%%%%%%%%%%%%%%%%%%%%%%%%%%%%%%%%%%%%%%%%%%%%%
%%%%%%%%%%%%%%%%%%%%%%%%%%%%%%%%%%%%%%%%%%%%%%%%%%%%
\begin{lemma}    \label{insideball}
There exists a constant $c = c(n, \lambda, \Lambda)$ such that $$B_{cR} \subset \{G > w_R\}.$$
\end{lemma}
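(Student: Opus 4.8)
The statement to prove is a lower bound on the noncontact set of $w_R$: a ball $B_{cR}$ is contained in $\{G > w_R\}$, where $c$ depends only on $n, \lambda, \Lambda$. Morally this is the companion to Theorem~\refthm{FBisbdd} (which gave $\{w_M \ne G\} \subset B_{CR}$), and the natural tool is the nondegeneracy of the obstacle-problem solution, but applied \emph{in reverse}: instead of ruling out free boundary points far from the origin, we use the quadratic \emph{growth-from-below} of $W := G - w_R$ near $0$ to force $W > 0$ on a definite ball. Recall $W = -V_R \geq 0$ solves $L(W) = R^{-n}\chisub{\{W>0\}}$ away from $0$, and $0 \in \closure{\{W>0\}}$ since $G$ blows up at $0$ while $w_R$ is bounded (so certainly $w_R < G$ in a punctured neighborhood of $0$).

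The first step is to record the scaling: if $u_R(x) := R^{-2} W(Rx)$ (with a correspondingly rescaled operator $\tilde L$ of the same ellipticity, and rescaled right-hand side of the same bounds), then it suffices to produce an absolute constant $c_0 = c_0(n,\lambda,\Lambda)$ with $B_{c_0} \subset \{u_R > 0\}$; undoing the scaling gives $B_{c_0 R} \subset \{W > 0\} = \{w_R < G\}$. So I reduce to $R = 1$. The second step is the heart: apply Theorem~\refthm{NonDeg} (Nondegeneracy) at the free-boundary point $0$ to get $\sup_{B_r} W \geq C r^2$ for every $r \leq 1$ — wait, more carefully: nondegeneracy as stated gives a point in $B_r$ where $W$ is at least $Cr^2$, which only says the noncontact set reaches into $B_r$, not that it contains a subball. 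The correct mechanism is the one already used in Corollary~\refthm{FBHZM} and the porosity remark: combine nondegeneracy with the optimal regularity bound (Theorem~\refthm{OpReg}) to show that near any free-boundary point, inside each ball there is a \emph{proportional subball} on which $W$ is strictly positive. Applying this with the free-boundary point $0$ and radius $r = 1$ yields a ball $B_{c}(\bar x) \subset B_1$ with $B_c(\bar x) \subset \{W > 0\}$ — but this subball need not be centered at the origin, whereas the lemma demands $B_{cR} \subset \{G > w_R\}$ centered at $0$.

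To get a ball \emph{centered at $0$}, I would instead argue directly. Since $0 \in \closure{\{W>0\}}$, Lemma~\refthm{posball}-type reasoning does not apply (that needed $W(0) > 0$). Instead, suppose for contradiction that $\{W = 0\}$ comes arbitrarily close to $0$, i.e. for every small $c$ there is $x_c$ with $|x_c| \le c$ and $W(x_c) = 0$. But $W$ is bounded (say by $\bar C$, independent of $R$ after scaling, by Lemma~\refthm{parabd} applied to the rescaled problem), whereas $G(x,0) \to \infty$ as $x \to 0$. So pick $c$ small enough that $\min_{\partial B_{2c}} G > \bar C \ge \sup_{B_{1/2}} W \ge \sup W$ near $0$; then on $B_{2c} \setminus \{0\}$ we have $W = G - w_R$, and since $w_R \le \bar C < G$ throughout this punctured ball, in fact $W > 0$ on all of $B_{2c} \setminus \{0\}$, contradicting $W(x_c) = 0$. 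Cleaned up: choose $c = c(n,\lambda,\Lambda)$ so that $\inf_{B_c} G(x,0) > \sup_{B_c} w_R$, using that $G$ has the Littman–Stampacchia–Weinberger lower bound $G(x,0) \geq c'|x|^{2-n}$ while $w_R$ obeys a universal bound; then $B_{cR} \subset \{w_R < G\}$ after rescaling. The main obstacle is bookkeeping the constants to confirm that the threshold radius depends only on $n, \lambda, \Lambda$ (not on $R$) — this rests on the scaling normalization together with the universal upper bound for $w_R$ near $0$ and the universal Green's-function lower bound, both already available from \cite{LSW} and the results of Section~\ref{sec-OpRegNonDeg}.
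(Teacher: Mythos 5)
Your unit-scale argument, in its cleaned-up form, is exactly the paper's: bound the minimizer near the origin by a constant depending only on $n,\lambda,\Lambda$ and play that against the Littman--Stampacchia--Weinberger lower bound $G(x)\geq c_1|x|^{2-n}$ to get $B_c\subset\{G>w_1\}$. (One misattribution: in the paper the near-origin bound comes from Lemma \ref{NotBigatZ}, namely $w_1(0)\leq C$ and then $\|w_1\|_{L^{\infty}(B_{1/2})}\leq\tilde C$; it does not come from Lemma \ref{parabd}, which requires $w(0)=0$. Moreover your statement that $W=G-w_R$ is ``bounded by $\bar C$'' is false as written --- $W$ blows up like $G$ at the origin; the function that is locally bounded is $w_R$.)

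The genuine gap is the reduction from general $R$ to $R=1$, which you defer as ``bookkeeping'' and for which the only scaling you write down is the wrong one. With $u_R(x):=R^{-2}W(Rx)$ and $\tilde a^{ij}(x):=a^{ij}(Rx)$ one computes $\tilde L u_R = R^{n}\cdot R^{-n}\big[(LW)(Rx)\big]\cdot R^{-n}$ --- more precisely $\tilde L u_R(x)=(LW)(Rx)=R^{-n}\big(\delta(x)-\chi_{\{u_R>0\}}(x)\big)$ --- so both the Dirac mass and the density pick up a factor $R^{-n}$: this is not the unit problem, and the claim that the rescaled right-hand side ``has the same bounds'' fails. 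Likewise, your fallback of comparing $G$ with a bound on $w_R$ itself that is uniform in $R$ is not available: the uniform estimate holds only after normalization, and near $0$ one only gets $w_R\leq CR^{2-n}$, which degenerates as $R\to 0$ (and even a fixed-radius inclusion $B_c\subset\{G>w_R\}$ would not give $B_{cR}$ for $R$ large). The normalization that works, and that the paper uses, is $u(x):=R^{n-2}v_R(Rx)$, $v_R=G-w_R$: then $\tilde L u=\delta-\chi_{\{u>0\}}$, $R^{n-2}G(Rx)$ is the Green's function of $\tilde L$ with the same LSW bounds, and $R^{n-2}w_R(Rx)$ is the unit-scale solution to which Lemma \ref{NotBigatZ} applies with constants depending only on $n,\lambda,\Lambda$; the inclusion $B_c\subset\{u>0\}$ then rescales to $B_{cR}\subset\{G>w_R\}$. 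Since the entire content of the lemma is that the radius scales like $R$ with an $R$-independent constant, this normalization is the essential step of the proof, not an afterthought, and your proposal as written does not supply it.
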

\begin{proof}
By Lemma\refthm{NotBigatZ}we already know that there exists a constant $$C = C(n, \lambda, \Lambda)$$
such that $w_1(0) \leq C.$  Then it is not hard to show that
\begin{equation}
     \| w_1 \|_{L^{\infty}(B_{1/2})} \leq \tilde{C}.
\label{eq:busc}
\end{equation}
By \cite{LSW} for any elliptic operator $L$ with given $\lambda$ and $\Lambda$, we have
\begin{equation}
    \frac{c_1}{|x|^{n-2}} \leq G(x) \leq \frac{c_2}{|x|^{n-2}}.
\label{eq:BasicGreenEst}
\end{equation}
By combining the last two equations it follows that there exists a constant $c = c(n, \lambda, \Lambda)$ such that
$$B_{c}\subset\{G > w_1\}.$$
It remains to show that this inclusion scales correctly.  \\
\\
\noindent
Let $v_R := G - w_R$ (so $v_R = - V_R$).  Then $v_R$ satisfies
\begin{equation}
Lv_R = \delta - R^{-n}\chisub{ \{ v_R > 0\} } \ \ \text{in} \ \R^n \;.
\end{equation}
Now observe that by scaling our operator $L$ appropriately, we get an operator $\tilde{L}$
with the same ellipticity constants as $L,$ such that
\begin{equation}
     \tilde{L} \left( R^{n-2}v_R(Rx) \right) = \delta - \chisub{ \{ v_{_R}(Rx)>0 \} } \;.
\label{eq:tilLeqn}
\end{equation}
So we have
$$B_c \subset \left\{ x \; \rule[-.08 in]{.01 in}{.25 in}  \; v_R(Rx) > 0 \right\},$$
which implies
\begin{equation}
     B_{cR} \subset \left\{ \rule[-.08 in]{0 in}{.25 in} v_R(x) > 0 \right\}.
\label{eq:winningeqn}
\end{equation}
\end{proof}

%%%%%%%%%%%%%%%%%%%%%%%%%%%%%%%%%%%%%%%%%%%%%%%%%%%%

\noindent Suppose $v$ is a supersolution to 
$$Lv = 0,$$
i.e. $L v \leq 0.$ Then for any $\phi \geq 0,$ we have 
\begin{equation}
  \int_{\Omega} vL\phi \leq 0 .
  \label{eq:supersolution}
\end{equation}
If $R < S,$ then we know that $w_R \geq w_S,$ and so the function $\phi = w_R - w_S$ is a permissible test function.
We also know:
\begin{equation} 
   L\phi = R^{-n}\chisub{ \{ G > w_R \} } - S^{-n}\chisub{ \{ G > w_S \} }.
  \label{eq:zheng}
\end{equation} 
By observing that $v \equiv 1$ is both a supersolution and a subsolution and by plugging in our $\phi,$ we arrive at
\begin{equation}
   R^{-n}|\{G > w_R\}| = S^{-n}|\{G > w_S\}|,
  \label{eq:hao}
\end{equation}
and this implies
\begin{equation}
   L\phi = C\left[\frac{1}{|\{G > w_R\}|}\chisub{ \{ G > w_R \} } - \frac{1}{|\{G > w_S\}|}\chisub{ \{ G > w_S \} } \right].
  \label{eq:zhenghao}
\end{equation}
Now, Equation\refeqn{supersolution}implies 
\begin{equation} 
   0 \geq \int_{\Omega} vL\phi = C\left[\frac{1}{|\{G > w_R\}|}\int_{\{G > w_R\}} v -  \frac{1}{|\{G > w_S\}|}\int_{\{G > w_S\}} v \right].
   \label{eq:haozheng}
\end{equation}
Therefore, we have established the following theorem:
\begin{theorem} [Mean Value Theorem for Divergence Form Elliptic PDE]
Let $L$ be any divergence form elliptic operator with ellipticity $\lambda$, $\Lambda$.
For any $x_0 \in \Omega$, there exists an increasing family $D_R(x_0)$ which satisfies the following:
\begin{enumerate}
  \item $B_{cR}(x_0) \subset D_R(x_0) \subset B_{CR}(x_0)$, with $c,$ $C$ depending only on $n,$ $\lambda$ and $\Lambda$.
  \item For any $v$ satisfying $L v \geq 0$ and $R<S$, we have
       \begin{equation}
              v(x_0) \leq \frac{1}{|D_R(x_0)|}\int_{|D_R(x_0)|} v \leq  \frac{1}{|D_S(x_0)|}\int_{D_S(x_0)} v .
       \label{eq:MVTres}
       \end{equation}
\end{enumerate}
\end{theorem}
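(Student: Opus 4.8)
The plan is to assemble the theorem from the three ingredients already developed in the paper: the family of solutions $w_R$ to the obstacle problem \refeqn{BasicEqn}, the two-sided ball containment for their non-contact sets, and the supersolution inequality \refeqn{haozheng}. The defining move is to simply \emph{set} $D_R(x_0) := \{ w_R < G(\cdot, x_0) \}$, where $w_R$ is built relative to the pole $x_0$ (the construction in Sections~4--5 was done with pole at $0$, but translation-invariance of the hypotheses on $a^{ij}$ lets us place the pole at any $x_0 \in \Omega$). With this definition, property~(1) is \emph{exactly} the combination of Lemma~\refthm{insideball} (which gives $B_{cR}(x_0) \subset \{G > w_R\} = D_R(x_0)$) and Theorem~\refthm{FBisbdd} together with \refeqn{winningeqn}-style scaling (which gives $D_R(x_0) = \{w_R \ne G\} \subset B_{CR}(x_0)$), with $c$ and $C$ depending only on $n, \lambda, \Lambda$ as stated there.

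For property~(2), I would argue as in the displayed computation preceding the theorem statement. First, the monotonicity $R < S \Rightarrow w_R \ge w_S$ is Lemma~\refthm{OoS}, so $\phi := w_R - w_S \ge 0$ is a legitimate nonnegative test function. Its distributional Laplacian-type action under $L$ is $L\phi = R^{-n}\chisub{\{G > w_R\}} - S^{-n}\chisub{\{G > w_S\}}$ from the PDEs \refeqn{BasicEqn} satisfied by $w_R$ and $w_S$. Testing the trivial identity $L(1) = 0$ (so $1$ is simultaneously a supersolution and a subsolution) against $\phi$ forces $\int L\phi = 0$, i.e. the volume normalization \refeqn{hao}: $R^{-n}|D_R(x_0)| = S^{-n}|D_S(x_0)|$. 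Hence $L\phi$ can be rewritten as in \refeqn{zhenghao} as a difference of the two normalized averaging kernels $\tfrac{1}{|D_R|}\chisub{D_R} - \tfrac{1}{|D_S|}\chisub{D_S}$ times a common constant $C > 0$. Feeding this into \refeqn{supersolution} for a supersolution $v$ (note $Lv \ge 0$ in the paper's sign convention means $v$ is a supersolution) yields $\tfrac{1}{|D_R(x_0)|}\int_{D_R(x_0)} v \le \tfrac{1}{|D_S(x_0)|}\int_{D_S(x_0)} v$, which is the right-hand inequality of \refeqn{MVTres}. The left-hand inequality $v(x_0) \le \tfrac{1}{|D_R(x_0)|}\int_{D_R(x_0)} v$ I would obtain by letting $R \to 0$: since $D_R(x_0) \subset B_{CR}(x_0)$ shrinks to $x_0$ and $v$ is lower semicontinuous (as a supersolution, after adjusting on a null set it is l.s.c. in the sense of the LSW representative), the averages converge to a value $\ge v(x_0)$, and monotonicity in $R$ then propagates the bound up to every $R$.

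A few technical points I would need to nail down but do not expect to be serious. One: the test-function integrations are a priori over $B_M$ for $M$ large, but Theorem~\refthm{indepM} guarantees $\{w_R \ne G\}$ stabilizes inside a fixed ball $B_{CR}$ once $M$ is large enough, so all integrals localize and the ``$\int_\Omega$'' notation is unambiguous once $\Omega \supset B_{CR}(x_0)$; for general $\Omega$ one works with $w_R$ defined on all of $\R^n$ and uses that $v$ is defined on $\Omega \supset D_R(x_0)$. Two: justifying that $v$ may be tested against $\phi \in W^{1,2}_0$ with $L\phi$ a (signed, bounded, compactly supported) measure requires $v \in L^\infty_{loc}$ or at least $L^1_{loc}$ against the kernel, which holds since $D_R$ is bounded and $v$ is locally bounded below; this is the content of \refeqn{supersolution} as the paper uses it.

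The main obstacle is really the left-hand inequality $v(x_0) \le \fint_{D_R(x_0)} v$ — equivalently, identifying the \emph{correct pointwise representative} of the supersolution $v$ at $x_0$. The right-hand (monotonicity) half is essentially algebra once \refeqn{hao} is in hand, but the left half requires either (i) an $R \to 0$ limiting argument controlling the oscillation of the averages, using optimal regularity/nondegeneracy-type control on how $D_R(x_0)$ fills $B_{cR}(x_0)$, or (ii) a direct comparison arguing that among nonnegative test functions the normalized kernel $\tfrac{1}{|D_R|}\chisub{D_R}$ dominates the point evaluation $\delta_{x_0}$ in the appropriate dual sense — which is precisely where the function $v_R = G - w_R$ solving $Lv_R = \delta_{x_0} - R^{-n}\chisub{\{v_R>0\}}$ enters: $v_R \ge 0$ and $L v_R + R^{-n}\chisub{D_R} = \delta_{x_0}$ exhibits $\delta_{x_0} \le R^{-n}\chisub{D_R}$ in the sense of testing against supersolutions, since $\int v\, Lv_R = -\int \nabla v_R \cdot a \nabla v \le 0$ for $v$ a supersolution and $v_R \ge 0$ vanishing at infinity. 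That last computation is the crux and is where I would spend the real effort; everything else is bookkeeping.
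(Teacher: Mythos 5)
Your proposal is correct and, for its core, is exactly the paper's argument: you take $D_R(x_0):=\{w_R<G(\cdot,x_0)\}$, get property (1) from Lemma\refthm{insideball}together with Theorem\refthm{FBisbdd}(and Theorem\refthm{indepM}to make $w_R$ well defined on $\R^n$), and get monotonicity of the averages by testing with $\phi=w_R-w_S\ge 0$ (Lemma\refthm{OoS}\!\!) and normalizing via $v\equiv 1$, which is precisely the displayed chain\refeqn{supersolution}--\refeqn{haozheng}\!\!. Where you genuinely add something is the left-hand inequality $v(x_0)\le |D_R(x_0)|^{-1}\int_{D_R(x_0)}v$: the paper does not derive it in that display at all, but absorbs it into Corollary\refthm{SemiRep}\!\!, where $v(x_0)$ is \emph{defined} as the limit of the monotone averages (your route (i)). Your route (ii) --- testing the supersolution/subsolution inequality against $v_R=G-w_R\ge 0$, so that $Lv_R$ is $\delta_{x_0}$ minus $R^{-n}\chisub{\{v_R>0\}}$ and pairing with $1$ gives $|D_R(x_0)|=R^n$ --- is a legitimate and more self-contained alternative, with the proviso you yourself flag: pairing with $\delta_{x_0}$ presupposes a choice of pointwise representative at $x_0$, which is the same issue route (i) settles by definition, so in a final write-up you must either work with the LSW/semicontinuous representative or state the inequality for a.e.\ $x_0$. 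One caution on signs: under the paper's convention ($L$ may be $\Delta$, never $-\Delta$), $Lv\ge 0$ is the \emph{sub}solution condition, and from\refeqn{BasicEqn}one actually gets $L\phi=-R^{-n}\chisub{\{G>w_R\}}+S^{-n}\chisub{\{G>w_S\}}$, the opposite sign to\refeqn{zheng}\!\!; the paper's slips compensate and yours track the paper's, so the stated conclusion\refeqn{MVTres}is unaffected, but relabel ``supersolution''/``subsolution'' consistently when you clean this up.
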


%%%%%%%%%%%%%%%%%%%%%%%%%%%%%%%%%%%%%%%%%%%%%%%%%%%%
\noindent
As on pages 9 and 10 of \cite{C}, (and as Littman, Stampacchia, and Weinberger already observed using their own
mean value theorem,) we have the following corollary:
\begin{corollary}[Semicontinuous Representative]  \label{SemiRep}
Any supersolution $v,$ has a unique pointwise defined representative as
\begin{equation}
   v(x_0) := \lim_{R \downarrow 0} \frac{1}{|D_R(x_0)|}\int_{|D_R(x_0)|} v(x) dx \;.
\label{eq:ptrep}
\end{equation}
This representative is lower semicontinuous:
\begin{equation}
   v(x_0) \leq \lim_{x \rightarrow x_0} v(x)
\label{eq:ptreplsc}
\end{equation}
for any $x_0$ in the domain.
\end{corollary}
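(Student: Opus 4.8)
The plan is to run the classical potential-theoretic argument indicated by Caffarelli in \cite{C}; in our setting the only genuinely new point is the continuity, in the base point, of the averaging operation, and that will be the main obstacle. Fix $x_0\in\Omega$ and, for every $R>0$ small enough that $D_R(x_0)\subset\subset\Omega$, put
$$A_R(x_0):=\frac{1}{|D_R(x_0)|}\int_{D_R(x_0)}v(x)\,dx \;,$$
which is finite since $v\in L^1_{loc}(\Omega)$ and $|D_R(x_0)|\geq|B_{cR}|>0$ by part (1) of the Mean Value Theorem. First I would record the monotonicity of $R\mapsto A_R(x_0)$: applying the monotonicity of part (2) of the Mean Value Theorem to the subsolution $-v$ (note $L(-v)\geq 0$) gives $A_R(x_0)\geq A_S(x_0)$ whenever $R<S$, so $\lim_{R\downarrow 0}A_R(x_0)$ exists in $(-\infty,+\infty]$ and equals $\sup_{R>0}A_R(x_0)$. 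We take this common value to be $v(x_0)$. Since $A_R(x_0)$ depends only on the $L^1_{loc}$-equivalence class of $v$, the value so obtained is unambiguous, which is the uniqueness assertion; and since $B_{cR}(x_0)\subset D_R(x_0)\subset B_{CR}(x_0)$, the family $\{D_R(x_0)\}_{R>0}$ shrinks regularly to $x_0$, so by the Lebesgue differentiation theorem $A_R(x_0)\to v(x_0)$ for almost every $x_0$. Hence the everywhere-defined function $x_0\mapsto\lim_{R\downarrow 0}A_R(x_0)$ agrees almost everywhere with $v$ and is therefore a genuine (indeed the canonical) representative.

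For lower semicontinuity I would show that this representative is a supremum of continuous functions. The key claim is that for each fixed $\rho>0$ the map $x\mapsto A_\rho(x)$ is continuous on $\{x : D_\rho(x)\subset\subset\Omega\}$. Granting this, for every such $\rho$ we have $v(x)=\sup_{r>0}A_r(x)\geq A_\rho(x)$, hence
$$\liminf_{x\to x_0}v(x)\;\geq\;\liminf_{x\to x_0}A_\rho(x)\;=\;A_\rho(x_0) \;;$$
taking the supremum over $\rho>0$ gives $\liminf_{x\to x_0}v(x)\geq\sup_{\rho>0}A_\rho(x_0)=v(x_0)$, which is the asserted inequality. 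Everything here is local, so passing from the ambient $\R^n$-construction to a general domain $\Omega$ is routine: one simply works in a small ball about $x_0$.

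The substantive point is thus the continuity of $x\mapsto A_\rho(x)$, equivalently the continuous dependence on its base point $x$ of the noncontact set $D_\rho(x)=\{w_\rho^{x}<G(\cdot,x)\}$, where $w_\rho^{x}$ is built from the Green's function $G(\cdot,x)$ as in our construction of $w_R$ above. For this I would assemble: (i) continuous dependence of $G(\cdot,x)$ on the pole $x$ --- locally uniformly off the diagonal and in $L^1_{loc}$ --- from the estimates of \cite{LSW}; (ii) continuous dependence of the solution of the obstacle problem on its obstacle, in $W^{1,2}$ and hence, by De Giorgi-Nash-Moser, locally uniformly; and (iii) Corollary\refthm{FBHZM}\!\!, that the free boundary $\partial D_\rho(x)$ is Lebesgue-null, which is precisely what lets locally uniform convergence of the $w_\rho$'s and of the obstacles upgrade to $\chi_{D_\rho(x_n)}\to\chi_{D_\rho(x_0)}$ in $L^1$ whenever $x_n\to x_0$. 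Since $v\in L^1_{loc}$ and $|D_\rho(x)|$ is bounded below, $A_\rho(x_n)\to A_\rho(x_0)$ then follows. I expect the delicate step to be (iii) --- verifying that no mass is lost onto the null free boundaries in the limit --- but this is exactly the kind of stability for which the measure-zero conclusion of Corollary\refthm{FBHZM}was established, and it mirrors the corresponding step in \cite{C}.
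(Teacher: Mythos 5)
Your route is the one the paper intends but does not write out (the text simply defers to pages 9--10 of \cite{C}): define $v(x_0)$ as the monotone limit of the averages $A_R(x_0)$, identify it a.e.\ with $v$ by Lebesgue differentiation (legitimate, since $B_{cR}(x_0)\subset D_R(x_0)\subset B_{CR}(x_0)$ makes the family shrink regularly), and get lower semicontinuity by exhibiting the representative as $\sup_{\rho}A_\rho$ with $x\mapsto A_\rho(x)$ continuous (in fact only $\liminf_{x\to x_0}A_\rho(x)\geq A_\rho(x_0)$ is needed). That first half, including the application of the Mean Value Theorem to $-v$ to get the monotonicity, is correct as written, and you have correctly isolated the one genuinely new difficulty in the variable-coefficient setting: unlike the Laplacian case, $D_\rho(x)$ is not a translate of $D_\rho(x_0)$, so the continuity of the averages in the base point has to be proved.

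The genuine gap is in your step (iii). Writing $u_n:=G(\cdot,x_n)-w^{x_n}_\rho$ and $u_0:=G(\cdot,x_0)-w^{x_0}_\rho$, locally uniform convergence $u_n\to u_0$ together with Corollary\refthm{FBHZM}does \emph{not} by itself give $\chisub{D_\rho(x_n)}\to\chisub{D_\rho(x_0)}$ in $L^1$: the measure-zero statement only controls the free boundary of the limit, while the dangerous scenario is that $u_n$ is small but strictly positive on a set of substantial measure deep inside the interior of the limit contact set $\{u_0=0\}$, which is perfectly compatible with uniform convergence. What excludes this is the quantitative nondegeneracy, Theorem\refthm{NonDeg}\!\!, applied to $u_n$ (away from the pole $u_n$ solves $Lu_n=\rho^{-n}\chisub{\{u_n>0\}}$, so the theorem applies with $\bar{\lambda}=\rho^{-n}$): if $u_n(y)>0$ and $B_r(y)\subset\{u_0=0\}^{\text{o}}$, then $\sup_{B_r(y)}u_n\geq C\rho^{-n}r^2$, contradicting $u_n\to u_0\equiv 0$ uniformly on $B_r(y)$. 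Hence for large $n$ the set $D_\rho(x_n)$ lies in an $r$-collar of $\closure{D_\rho(x_0)}$, and only at that point do Corollary\refthm{FBHZM}(to discard $\closure{D_\rho(x_0)}\setminus D_\rho(x_0)$) and the shrinking collar finish $|D_\rho(x_n)\setminus D_\rho(x_0)|\to 0$; the opposite inclusion follows from uniform convergence alone. So cite Theorem\refthm{NonDeg}\!\!, not just the corollary, as the engine of this step. Separately, your items (i) and (ii) --- continuity of $G(\cdot,x)$ in the pole via \cite{LSW} and stability of the minimizer of $J$ when the obstacle $G(\cdot,x_n)$ replaces $G(\cdot,x_0)$, upgraded to locally uniform convergence by the uniform $C^{\alpha}$ bounds, with Lemma\refthm{insideball}keeping the contact set away from the pole --- are plausible and provable with the tools of the paper, but as written they are assertions rather than proofs, and they are not contained anywhere in the paper; a complete proof of the corollary must supply them.
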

%%%%%%%%%%%%%%%%%%%%%%%%%%%%%%%%%%%%%%%%%%%%%%%%%%%%

\noindent
We can also show the following analogue of G.C. Evans' Theorem:
\begin{corollary}[Analogue of Evans' Theorem]   \label{EvTh}
Let $v$ be a supersolution to $Lv = 0,$ and suppose that $v$ restricted to the support of $Lv$ is
continuous.  Then the representative of $v$ given by Equation\refeqn{ptreplsc}is continuous.
\end{corollary}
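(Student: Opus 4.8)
The plan is to adapt the classical proof of G.~C.~Evans' continuity principle for potentials to the divergence form setting. By Corollary\refthm{SemiRep} the canonical representative of $v$ (the limit of the $D_R$--averages) is already lower semicontinuous, so the entire task is to prove that this representative is \emph{upper} semicontinuous at each $x_0$, i.e. $\limsup_{x\to x_0} v(x)\le v(x_0)$. Write $-Lv=\mu$, where $\mu\ge 0$ is a Radon measure, and set $K:=\operatorname{supp}(Lv)=\operatorname{supp}\mu$. Since $v|_K$ is continuous it is in particular finite on $K$, which forces $\mu$ to have no atom on $K$ and forces $v$ to be locally bounded. If $x_0\notin K$ then $v$ solves $Lv=0$ in the open set $\Omega\setminus K$ and is therefore continuous near $x_0$ by the De~Giorgi--Nash--Moser theorem, so nothing is left to check; in particular if $K=\emptyset$ (that is, $v$ is $L$--harmonic) we are done. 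Hence fix $x_0\in K$ and $\epsilon>0$.

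The key device is a local decomposition near $x_0$ into an ``$L$--Green potential'' carried close to $x_0$ and a part which is $L$--harmonic near $x_0$. Fix a ball $B_\rho(x_0)\Subset\Omega$, let $p$ solve $Lp=-\mu|_{B_\rho(x_0)}$ in $B_\rho(x_0)$ with $p=0$ on $\partial B_\rho(x_0)$, and set $g:=v-p$. Then $p\ge 0$ in $B_\rho(x_0)$ by the weak maximum principle (the argument used in Lemma\refthm{parabd}), while $Lg=0$ in $B_\rho(x_0)$, so $g$ is continuous there by De~Giorgi--Nash--Moser. Because the $D_R$--average operators are linear and, for small $R$, depend only on the integrand near the point of evaluation, the canonical representatives obey $v=p+g$ pointwise in $B_\rho(x_0)$, and all the pointwise inequalities below may be read off this splitting. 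Finally, $p$ is the $L$--Green potential of $\mu|_{B_\rho(x_0)}$, whose support lies in $K\cap\overline{B_\rho(x_0)}$; on that set $p=v-g$ is continuous, being a difference of continuous functions.

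The proof is then closed by a potential-theoretic maximum principle. The crucial input, which plays the role of Maria's maximum principle for Newtonian potentials, is that the $L$--Green potential $p$ attains its supremum on $\operatorname{supp}(\mu|_{B_\rho(x_0)})$, i.e. $\sup_{B_\rho(x_0)}p=\sup_{K\cap\overline{B_\rho(x_0)}}p$; this follows by running the weak maximum principle on the open set $B_\rho(x_0)\setminus K$, using that $p\ge 0$, that $p$ vanishes (in the $W^{1,2}$ sense) on $\partial B_\rho(x_0)$, and that $Lp=0$ there. Combining this with (i) the continuity of $v|_K$ at $x_0$, which gives $p=v-g\le v(x_0)+\epsilon-g$ on $K\cap\overline{B_\rho(x_0)}$ once $\rho$ is small, and (ii) the continuity of $g$, one obtains $\sup_{B_\rho(x_0)}p\le p(x_0)+C\epsilon$; since $v=p+g$ and $g$ is continuous at $x_0$, this yields $\limsup_{x\to x_0}v(x)\le v(x_0)+C\epsilon$, and letting $\epsilon\downarrow 0$ gives the upper semicontinuity. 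Together with Corollary\refthm{SemiRep} this proves continuity at $x_0$, and as $x_0$ was arbitrary the corollary follows.

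I expect the genuine obstacle to be precisely the ``potential-theoretic maximum principle'' step and the interplay between the near part $p$ and the far (harmonic) part $g$ --- control of the oscillation of $g$ near $x_0$ is delicate, and in the classical Newtonian case this is where the sharp kernel estimate $\mu(B_\rho(x_0))\,\rho^{2-n}\le\int_{B_\rho(x_0)}|x_0-\cdot|^{2-n}\,d\mu\to 0$ enters; here one must replace the explicit kernel by the bounds on the Green's function from \cite{LSW} together with the weak maximum principle on the (possibly very irregular) set $B_\rho(x_0)\setminus K$. The remaining difficulty is purely bookkeeping: one must keep every pointwise statement --- the hypothesis ``$v|_K$ continuous,'' the decomposition $v=p+g$, and the estimate on $K$ --- attached to the lower semicontinuous representative, which is legitimate because the $D_R$--average operators are linear and local. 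Beyond this, the argument uses only the regularity, nondegeneracy, and maximum-principle facts already established in the earlier sections.
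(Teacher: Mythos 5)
Your overall strategy---lower semicontinuity from Corollary\refthm{SemiRep}\!\!, a local splitting $v=p+g$ with $p$ the local $L$-potential of $\mu=-Lv$ and $g$ an $L$-harmonic (hence continuous) remainder, and then a continuity-principle argument on $K=\operatorname{supp}\mu$---is the right way to adapt the classical Evans--Vasilesco proof, and it is the spirit of the argument the paper simply defers to (pages 10--11 of \cite{C} for $L=\Delta$). However, the way you close the argument has a genuine gap, in two places. First, the ``Maria-type'' step $\sup_{B_\rho}p=\sup_{K\cap\overline{B_\rho}}p$ does not follow by ``running the weak maximum principle on $B_\rho(x_0)\setminus K$'': $p$ is a priori only lower semicontinuous, and to apply a maximum principle on that open set you must control $p$ on the portion of its boundary lying in $K$ from \emph{outside} $K$, in a limsup or trace sense---which is precisely the upper semicontinuity at points of $K$ that you are trying to prove, so as stated the step is circular. (A correct proof of this domination principle in the $W^{1,2}$/\cite{LSW} framework tests the equation for $p$ on all of $B_\rho$ with $(p-M)^{+}$, $M:=\sup_{K\cap\overline{B_\rho}}p$, using that $\mu=-Lv$ is locally of finite energy, hence charges no set of zero capacity, and that $p\le M$ holds $\mu$-a.e.; alternatively one bypasses it entirely with the kernel comparison described below.)

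Second, and more seriously, even if the domination principle is granted, your final estimate $\sup_{B_\rho}p\le p(x_0)+C\epsilon$ does not follow from ``(i) continuity of $v|_K$ at $x_0$ and (ii) continuity of $g$.'' From $p=v-g$ and $|v-v(x_0)|\le\epsilon$ on $K\cap\overline{B_\rho}$ you only get $\sup_{B_\rho}p\le v(x_0)+\epsilon-\inf_{K\cap\overline{B_\rho}}g$, so you need $g\ge g(x_0)-C\epsilon$ on \emph{all} of $K\cap\overline{B_\rho}$; but that supremum may be attained at points of $K$ at distance comparable to $\rho$ from $x_0$, where interior continuity of the $L$-harmonic function $g=g_\rho$ gives no smallness, and $\operatorname{osc}_{B_\rho}g_\rho$ is comparable to $\operatorname{osc}_{B_\rho}v$---exactly the quantity not yet under control, and it does not shrink by ``taking $\rho$ small'' since $g_\rho$ changes with $\rho$. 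The classical argument (and the one to be adapted here) avoids this by three quantitative steps that are absent from your write-up: (i) use the finiteness of $v(x_0)$ (no atom of $\mu$ at $x_0$) to choose $\rho$ so that the local potential satisfies $p_\rho(x_0)=\int_{B_\rho}G(x_0,y)\,d\mu(y)<\epsilon$ --- this is where the bounds $c_1|x-y|^{2-n}\le G(x,y)\le c_2|x-y|^{2-n}$ of \cite{LSW} enter, whereas in your proof $p(x_0)$ only appears as an additive term that cancels; (ii) transfer this smallness to points $x'\in K$ \emph{near} $x_0$, using continuity of $v|_K$ and of $g$ only near $x_0$; and (iii) for $x\notin K$ near $x_0$, compare with a nearest point $x'\in K$, noting $|x'-y|\le 2|x-y|$ for all $y\in K$, whence $p_\rho(x)\le C(n,\lambda,\Lambda)\,p_\rho(x')$. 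With (i)--(iii) both the domination difficulty and the oscillation-of-$g$ difficulty disappear; without them, your argument does not close.
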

\begin{pf}
This proof is almost identical to the proof given on pages 10 and 11 of \cite{C} for $L = \Delta.$
\end{pf}
%%%%%%%%%%%%%%%%%%%%%%%%%%%%%%%%%%%%%%%%%%%%%%%%%%%%
%%%%%%%%%%%%%%%%%%%%%%%%%%%%%%%%%%%%%%%%%%%%%%%%%%%%
%\newsec{Appendix}{Appendix}

%Here we wish to prove the existence of a function $W$ which satisfies Equation\refeqn{}\!\!.
%%%%%%%%%%%%%%%%%%%%%%%%%%%%%%%%%%%%%%%%%%%%%%%%%%%%
%%%%%%%%%%%%%%%%%%%%%%%%%%%%%%%%%%%%%%%%%%%%%%%%%%%%

\newsec{Acknowledgements}{Ack}
We are endebted to Luis Caffarelli, Luis Silvestre, and David Kinderlehrer for very useful conversations.
We also wish to thank the anonymous referees for very careful readings and for helpful suggestions.

%%%%%%%%%%%%%%%%%%%%%%%%%%%%%%%%%%%%%%%%%%%%%%%%%%%%

\bibliographystyle{abbrv}
\bibliography{simple}

\end{document}